\documentclass[11pt]{amsart}

\usepackage[english]{babel}     
\usepackage[utf8]{inputenc}
\usepackage{amsmath}
\usepackage{amsfonts}
\usepackage{amssymb}

\usepackage{mathrsfs}
\usepackage{stmaryrd}
\usepackage{hyperref}
\usepackage{xcolor}
\usepackage{todonotes}
\usepackage{comment}
\usepackage{enumerate}
\usepackage{tikz-cd}
\usepackage[capitalize]{cleveref}
\usepackage{tikz}
\usepackage[T1]{fontenc}
\usepackage{todonotes}

\usetikzlibrary{shapes,calc,arrows,intersections,decorations.pathreplacing,decorations.markings,shapes.geometric,calc,positioning,backgrounds}

\newtheorem{theorem}{Theorem}[section]

\newtheorem{lemma}[theorem]{Lemma}

\newtheorem{corollary}[theorem]{Corollary}

\theoremstyle{definition}
\newtheorem{definition}[theorem]{Definition}

\newtheorem{remark}[theorem]{Remark}

\theoremstyle{remark}
\numberwithin{equation}{section}

\newcommand{\Pol}{\textrm{Pol}}

\newcommand{\bbP}{\mathbb{P}}
\newcommand{\tr}{\mathrm{tr}}

\newcommand{\Link}{{\rm Link}}
\newcommand{\cP}{\mathcal{P}}
\newcommand{\cross}{\textrm{cr}}
\newcommand{\boldi}{{\bf i}}

\newcommand{\id}{\textrm{id}}

\newcommand{\order}{\mathcal{O}}
\newcommand{\minotimes}{\otimes_{{\rm min}}}

\title[Strongly convergent matrix models for $q$-Gaussian algebras]{Strongly convergent matrix models for $q$-Gaussian algebras}

\date{\noindent \today.  MSC2010 keywords:  46L51, 46L54 , 15B52. }

\author[Martijn Caspers]{Martijn Caspers}
\author[Enli Chen]{Enli Chen}

\address{TU Delft, EWI/DIAM,
	P.O.Box 5031,
	2600 GA Delft,
	The Netherlands}

\email{M.P.T.Caspers@tudelft.nl}

\email{E.Chen-1@tudelft.nl}

\begin{document}

\maketitle

\begin{abstract} 
We construct strongly convergent  finite-dimensional random matrix models for finite $q$-Gaussian family in the range   $|q| < \sqrt{2}-1$. The construction has two stages. First, we show that normalized sums of graph-product semicirculars over an Erd\H{o}s--R\'enyi graph   satisfy the $q$-Toeplitz relations up to an operator norm error converging to zero in probability. Using  ultraproduct methods, this yields complete strong convergence, uniformly over all matrix coefficient dimensions, for noncommutative polynomials with bounded degree. Second, we use a quantitative tensor-GUE for graph-product semicirculars to convert these operator models into finite-dimensional random matrices. For every fixed polynomial degree, the convergence is uniform over coefficient dimensions that may be larger than the dimension of the random matrices. As applications, in the above range of $q$, the $C^*$-algebra generated by a finite $q$-Gaussian family is MF, and the Brown–Douglas–Fillmore extension semigroup of the nontrivial $C^*$-algebra generated by a  finite $q$-Gaussian family is not a group.

\end{abstract}
\section{Introduction}
The $q$-Gaussian processes and $q$-semicircular variables, $-1 \leq q \leq 1$, have been introduced and developed by Bo\.{z}ejko and Speicher \cite{BozejkoSpeicher} and Bo\.{z}ejko, K\"ummerer and Speicher \cite{BozejkoKummererSpeicher}. They interpolate between Bosonic classical fields of Gaussian variables ($q=1$) and Fermionic CAR algebras ($q=-1$). The particular parameter $q=0$ gives a free semicircular system and is of fundamental importance in Voiculescu's theory of free probability.  The aim of this paper is to construct random matrix tuples such that norms of noncommutative polynomials in that tuple   converge  to norms of the polynomial evaluated at a  $q$-semicircular system.  The main feature of our construction is that convergence holds even when the coefficient dimension grows beyond the size of the random matrices.

\vspace{0.3cm}
 
Let us give an overview of the origin and motivation of strong convergence results. Firstly, distinction between  weak convergence (convergence in moment) and strong convergence  is essential. Voiculescu proved that the independent GUE (`Gaussian Unitary Ensemble') converges in joint $*$-moments to a free semicircular family \cite{Voi91}. Haagerup and Thorbj{\o}rnsen subsequently proved convergence of the
operator norm of every noncommutative polynomial \cite{HT05}. Together with weak convergence, this is called \emph{strong convergence}; it retains the limiting $C^*$-norm and rules out spectral outliers that are invisible to normalized traces. Subsequent developments include the real and symplectic Gaussian ensembles \cite{Sch05}, deterministic deformations of GUE matrices \cite{CoMa14}, and the Wigner-type ensembles \cite{And13}. 

A substantially stronger question is whether strong convergence remains
uniform under growing matrix coefficients. More precisely, assume an ensemble $X^{(N)}$ to converge to $X$. For
matrices $A_w^{(N)}\in M_{D_N}(\mathbb C)$ one considers
\[
  P_A(X^{(N)})
  =
  \sum_{|w|\leq d}A_w^{(N)}\otimes X_w^{(N)},
\]
where $w$ is a word and $X_{w}^{(N)}$ is the monomial of the ensemble described by $w$.  For every fixed $D_N=D$, this is a finite amplification of ordinary
strong convergence. When $D_N\to\infty$, however, scalar strong
convergence gives no uniform control over the expanding coefficient
ball. Such uniformity probes the higher matrix norms of the
bounded-degree polynomial space and hence its operator-space structure,
rather than only its scalar $C^*$-norm. Equivalently, it is a
finite-scale version of complete norm convergence and it is relevant to
completely bounded maps and minimal tensor products; see
\cite{Pisier2003,PisierSubexp}.

Growing matrix coefficients have played an important role in several
developments in random matrix theory and operator algebras. Pisier
connected strong convergence with slowly growing coefficients to the
theory of subexponential operator spaces
\cite{PisierSubexp}. In the related tensor-product problem for
independent GUE families, Pisier obtained strong convergence when the
second matrix size is $o(N^{1/4})$; Collins--Guionnet--Parraud improved
this to $o(N^{1/3})$ \cite{CollinsGuionnetParraud}. Hayes \cite{HayesPT} showed that
reaching the matrix model dimension regime is powerful enough to have major
von Neumann algebraic consequences: the GUE strong convergence
problem with equal coefficients sizes implies under certain assumptions the Peterson--Thom conjecture \cite{PetersonThom}; this problem was subsequently resolved in
\cite{BelCap,BC23} based on Hayes' method.  Thereafter, the polynomial method of \cite{CGTvH,CGVvH} has provided significant improvement of quantitative strong convergence
estimates with large matrix coefficients for several important random matrix models, and has established strong convergence for tensor-GUE, which was conjectured by \cite{MT26}.  In particular, C.-F. Chen--Garza-Vargas--van Handel proved
strong convergence for the classical Gaussian ensembles with 
matrix coefficients of dimension $\exp(o(N))$ \cite{CGVvH}.
  Further   strong convergence results with matrix coefficients and explicit rate functions, preceding \cite{CGVvH},  are contained in \cite{BBVH23,   Par22, Par23, Par24,MageedelaSalle},    
 and for further background, we refer the reader to the surveys
\cite{vH26a,vH26b} and the references therein.   

\vspace{0.3cm}

\subsection{Strongly convergent random matrix models for $q$-Gaussian families for $|q|<\sqrt{2}-1$}
The main result of the present paper is a strong convergence result with matrix coefficients growing faster than the model dimension  for a finite tuple of $q$-semicircular variables (Theorem \ref{thm:q-gaussian-complete-matrix-model}) in the range $\vert q \vert < \sqrt{2} -1$.

\vspace{0.3cm}

In passing we note that for  
$|q|<q_0(r)$, free monotone transport
\cite{GuionnetShlyakhtenko} represents an $r$-tuple of $q$-Gaussians
by analytic noncommutative functions of a free semicircular family.
Combining this representation with strong GUE convergence gives an
indirect strong approximation in a generator-dependent small-$q$
regime.

\vspace{0.3cm}
 
To approach our main result here several finite-dimensional approximations of $q$-Gaussian families
were previously known at the level of noncommutative distribution. Speicher's `$\epsilon$-free' central limit theorem \cite{Spe92} produces the standard spin-matrix approximation in joint $*$-moments, while \'Sniady \cite{SniadyCMP} constructed Gaussian
random-matrix models for $0<q<1$ interpolating between Gaussian and semicircular families. These results do not by themselves
control the norms of arbitrary noncommutative polynomials.

In this paper we use yet another model that is rooted in the work of M\l otkowski \cite{Mlot} and shows weak convergence of averages of large  sums of semicirculars that randomly are free or commuting, where randomness is dictated by the Erd\H{o}s-R\'enyi random graph with edge probability $q$, in case $0 \leq q < 1$. This model is still infinite dimensional but we approximate the semicirculars in this model by GUE's to obtain finite dimensional models.

\vspace{0.3cm}


To achieve this we need to overcome a difficulty  that is of  a different nature  than prior results: besides
the coefficient dimension, the tensor-interaction given by the  Erd\H{o}s-R\'enyi   graph and the number
of variables also grow. We establish the required uniformity for these
varying tensor-GUE models and ultimately obtain $q$-Gaussian random matrix
models with dimensions $n_k$ whose admissible coefficient dimensions $D_k$ satisfy
\[
  D_k>n_k.
\]
Thus
the approximation retains bounded-degree matrix norms beyond the
matrix model size amplification level.



In fact, C.-F. Chen--Garza-Vargas--van Handel proved that tensor-GUE models associated with a fixed interaction graph converge strongly to the corresponding graph-product semicircular family \cite[Theorem~9.8]{CGVvH}. Their tensor-GUE strong convergence  theorem  concerns a fixed graph and, in its sated form, a fixed polynomial with fixed matrix-coefficient dimension. They also observe that a quantitative version can be obtained by bootstrapping their argument, based on their recent developed powerful polynomial methods \cite{CGTvH, CGVvH}. However, the question for coefficient-uniform estimates remain open. Further, we note that their convergence rates worsen if the size of the graph increases causing a serious difficulty in the analysis.  

Our use of tensor-GUE models differs in two respects. First, the interaction graph is an Erd\H os--R\'enyi graph whose number of vertices tends to infinity, and the normalized averages of the corresponding graph-product semicirculars converge to a $q$-Gaussian family. Second, we require the strong convergence of tensor-GUE are uniform over bounded-degree polynomials whose matrix coefficients and coefficient dimensions may vary with the random matrix size. Establishing this uniformity is a substantial part of the argument.

Building on their quantitative Hayes' model strong convergence results and universality argument, we establish a growing-coefficient refinement for tensor-GUE models, which allow the matrix coefficient dimension to exceed the dimension of the matrix models themselves. 

The construction proceeds in two stages. In the first stage, we show
that normalized sums of graph-product creation operators (when $0\leq q < 1$) or Clifford-twisted graph-product creation operators (when $-1\leq q<0$) indexed by
$\Gamma_k\sim \Gamma(rk,|q|)$, namely $L_{k,a}^{q,\Gamma_k},a=1,\cdots,r$, satisfy the multivariate approximate $q$-Toeplitz
relations, i.e. $q$-Toeplitz
relations up to an operator-norm error (see \cref{Lem=LqToeplitzDifferent}):
\[
 (L_{k,a}^{q,\Gamma_k})^*L_{k,b}^{q,\Gamma_k}
 =\delta_{ab}1+qL_{k,b}^{q,\Gamma_k}(L_{k,a}^{q,\Gamma_k})^*+E_{k,ab}^{q,\Gamma_k},
\]
where, with probability tending to one,
\[
 \max_{a,b}\|E_{k,ab}^{q,\Gamma_k}\|
 =\mathcal O_{\mathbb{P}, q,r}\!\left(\frac{\log k}{\sqrt{k}}\right).
\]
The error estimate follows from the logarithmic size of the 
clique number of an Erd\H os--R\'enyi graph (see Lemma \ref{Lem=CliqueTailBound} or \cite{BollobasErdos}),  as well as  the square-root size of the centered adjacency matrix of an  Erd\H os--R\'enyi graph (see Theorem \ref{Thm=RandomMatrixTailBound}  from \cite{BaiYin}, \cite[Theorem 1.4]{Tro12}).

Conceptually, this relation implements a $q$-deformed non-backtracking word reduction: it separates immediate creation-annihilation contractions from reduced propagation, while the random-graph fluctuation are absorbed into the vanishing error $E_{k,ab}$; see Remark~\ref{rem:nonbacktracking}.

Then using the universal $q$-Toeplitz algebra and an ultraproduct argument, we identify the limiting representation with the $q$-Fock representation. Nuclearity and exactness then upgrade
fixed matrix coefficient strong convergence to complete bounded-matrix coefficient strong convergence. 

One of the key ingredient for the ultraproduct argument is applying the faithfulness of the Fock space representations of the universal $q$-Toeplitz algebras. Although it is expected to hold true for all $-1< q< 1$, it was only proved for range $|q|<\sqrt{2}-1$ (\cite{PuszWoronowicz}). This is where our specific range of $q$ for the later strong convergence of our random matrix models occur. 

In the second stage, we develop a growing-coefficient refinement of
the tensor-GUE argument of \cite{CGVvH}. A dimension-free Fock-space
comparison gives the quantitative estimate (see Lemma \ref{asytompzeroCLTmodulus})
\[
 \sup_{\substack{D\geq1\\ \Lambda(A)\leq M}}
 \left|\|\mathcal P_A(s^{(T)})\|
   -\|\mathcal P_A(x^{\Gamma})\|
 \right|\leq
 \frac{(2^L-2)M}{\sqrt T}.
\]
Combining this estimate with the quantitative Hayes's model strong convergence theorem
and the universality comparison (developed in \cite{BrailovskayaVanHandel2024}) used in \cite{CGVvH}, we obtain that the norms of linear pencils of tensor-GUE models converge uniformly over coefficient dimensions
$D\leq N^{2L}$. Then we apply exactness compression to obtain the corresponding
uniform lower bound estimate, while a finite polynomial spectral detector
and uniform linearization  extend the result from linear pencils to bounded-degree
noncommutative polynomials.

Finally, we combine these two stages by realizing every graph
$\Gamma_k$ through tensor-coordinate sets and then choosing the GUE
dimension sufficiently large. This produces finite-dimensional
random matrices $X_1^{(k)},\ldots,X_r^{(k)}$ such that, for every
fixed degree $d$ (see \cref{thm:q-gaussian-complete-matrix-model}),

\[
\frac{\|P_k(X_1^{(k)},\ldots,X_r^{(k)})\|}{
\|P_k(s_1^{(q)},\ldots,s_r^{(q)})\|
 }\longrightarrow 1
\]
in probability, uniformly for nonzero matrix-valued noncommutative polynomials
$P_k$ of degree at most $d$ and coefficient dimensions up to $D_k$,
where $D_k$ may be chosen larger than the dimension of the random
matrices themselves.

\vspace{0.3cm}

 The construction of matrix models of this paper applies  to  the explicit generator-independent range $|q|<\sqrt{2}-1$, and provides quantitative complete bounded-degree strong convergence at growing matrix-coefficient allowed larger than the dimension of the matrix model. To the best of our knowledge, this is the first finite-dimensional random-matrix models proved to converge strongly to nonfree $q$-Gaussian families throughout the explicit generator-independent range of non-zero $q$ uniform over matrix-coefficient dimensions.

\vspace{0.3cm}

In this context we also mention that very recently Shlyakhtenko \cite{ShlyakhtenkoStrongC} showed that `CAR-UE' matrix models (see also \cite{PisierShl}) do not strongly converge to semicircular variables. Heuristically this model replaces  Gaussian random variables, appearing as matrix entries of  the  GUE's, by CAR algebras. On the other hand in  \cite[Section 4]{ShlyakhtenkoStrongC} it is shown that if these Gaussian variables are replaced by $q$-Gaussians $-1 < q < 1$   strong convergence to the semicircular system holds. Note that in  \cite{ShlyakhtenkoStrongC} the limit model is  semicircular ($q=0$) whereas in the current paper we consider different, finite dimensional,  models that converge to any $q$-Gaussian (at least if $\vert q \vert < \sqrt{2} -1$).

\subsection{Sharp Erd\H os--R\'enyi Graph-Product Degree-One Khintchine Inequality}
As a first consequence of the approximate $q$-Toeplitz relation, we obtain an asymptotically sharp degree-one graph-product Khintchine inequality for Erd\H os--R\'enyi graph products of semicircular variables which for $-1 \leq q < 1$ states that (see \cref{Thm=MainKhintchine}):
\begin{equation}\label{Eqn=KhinIntro}
\left\|\frac1{\sqrt{k}}\sum_{v=1}^{k} x_v^{q,\Gamma_k}  
 \right\|
 \leq
 \frac{2}{\sqrt{1-q}}  + \mathcal O_{\mathbb P}\left(\frac{\log k}{\sqrt{k}}\right),\qquad \left\|\frac1{\sqrt{k}}\sum_{v=1}^k x_v^{q,\Gamma_k}\right\|\longrightarrow  \frac{2}{\sqrt{1-q}}.
 \end{equation}
 in probability, where the semicircular variables $x_v^{q,\Gamma_k}$ are labeled by $\Gamma(k,\vert q \vert)$; in case $-1 \leq q <0$ the variables are Clifford twisted versions of semicirculars.   The limiting constant is optimal, since $2/\sqrt{1-q}$ is precisely the norm of a
standard $q$-Gaussian variable.  This complements the deterministic
graph-product Khintchine inequalities of
\cite{CKL20} and  the recent degree-one graph-product Khintchine inequalites
of \cite{CollinsMiyagawa,SantosEtAl}. These works treat
arbitrary operator coefficients for a fixed graph and bound the norm
in terms of adjacency eigenvalues, clique numbers, or the associated
trace monoid. These results are deterministic and thus hold for all graphs.   For an Erd\H{o}s--R\'enyi graph, applying their general
clique-based graph-product Khintchine inequalities gives, where the first estimate holds everywhere, 
\[
 \left\|\frac1{\sqrt{k}}\sum_{v=1}^{k}x^{\Gamma_k}_v
 \right\|
 \leq 2\sqrt{\omega(\Gamma_k)}
 = \order_{\mathbb P}(\sqrt{\log k}),
\]
whereas our probabilistic argument identifies the finite sharp limit
$2/\sqrt{1-q}$ and provides a quantitative error estimate.  

Note that the Khintchine inequality \eqref{Eqn=KhinIntro} is the simplest case of our strong convergence result where the polynomial is linear and scalar valued. We stress that this specific case is already  new.

\subsection{MF Property of $q$-Gaussian $C^*$-algebras}
 As a  natural corollary from the strong convergent random matrix models for $q$-Gaussian family,  applying a diagonal choice of the parameters of such random matrices gives strong
convergence for all noncommutative polynomials and implies the MF
property of the associated $q$-Gaussian $C^*$-algebra with $|q|<\sqrt{2}-1$. 

Then using the recent proof \cite{AmrutamJekelWasilewski2026} of the simplicity of the $q$-Gaussian $C^*$-algebras, we show that the $q$-Gaussian $C^*$-algebras $A_q(\mathbb C^r)$ with $r\geq 2$ are not quasidiagonal. Hence combining this with  the MF property, by Brown's criterion \cite{Brown2004}, we deduce that the Brown–Douglas–Fillmore extension semigroup $\mathrm{Ext}(A_q(\mathbb C^r))$ is not a group for $r\geq 2$ and $|q|<\sqrt{2}-1$.

\subsection{Paper Overview}
The paper is organized as follows. In \cref{sec:preliminaries} we fix
notation. In \cref{sec:singlestrongconvergence}, we establish the Erd\H os--R\'enyi graph-product operators model approximation for $q$-Gaussians, as well as the sharp degree-one graph product Khintchine inequality for the Erd\H os--R\'enyi graph. In \cref{sec:weakconvergence}, we obtain a quantitative weak convergence for the operator model on the level of $q$-Toeplitz variables. In \cref{sec:multistrongconvergence}, we pass the single variable strong convergence of the operator model to
 complete strong convergence with growing matrix coefficients.  In \cref{sec:strongconvergencematrix}, we convert the
operator models into finite random matrix models via the tensor-GUE models, and prove their strong convergence with growing matrix coefficients.  The applications to the MF property and extension semigroup  are
proved in \cref{sec:MF}.

\subsection{Acknowledgements.} Authors thank Luca Junk, Manish Kumar, Mikael de la Salle and Roland Speicher for helpful discussions at the early stage of this project.

\section{Preliminaries}\label{sec:preliminaries}

 All $C^*$-algebras are complex and unital, all tensor product are minimal, and Hilbert-space innner products are linear in the first variable.
\subsection{Notation, polynmials and strong convergence}\label{subsec:general-notation}

We write $[m]=\{1,\ldots,m\}$, $M_D=M_D(\mathbb C)$, and
\(
   \tr_D(A)=D^{-1}\mathrm{Tr}(A).
\)
The notation $M_{D,N}$ stands for the $D\times N$ complex matrices,
$(M_D)_{\rm sa}$ for the self-adjoint matrices, and $\id_D$ for the
identity map on $M_D$. For a self-adjoint element $a$, its spectrum is
$\mathrm{sp}(a)$. If $K,L\subset\mathbb R$ are nonempty compact
sets, then
\[
 d_{\rm H}(K,L)
 =\max\left\{
   \sup_{x\in K}\mathrm{dist}(x,L),
   \sup_{y\in L}\mathrm{dist}(y,K)
 \right\}
\]
denotes their Hausdorff distance, and
\[
 K+[-\varepsilon,\varepsilon]
 =\{x+t:x\in K,\ |t|\leq\varepsilon\}.
\]

 We write $Z_k=O_{\mathbb P}(b_k)$ if
$Z_k/b_k$ is bounded in probability, and $Z_k=o_{\mathbb P}(b_k)$ if
$Z_k/b_k\to0$ in probability. An event holds \emph{with high
probability} if its probability tends to one. For a family
$(Z_{k,\theta})_{\theta\in I_k}$, uniform convergence in probability
means
\[
  \sup_{\theta\in I_k}
  \mathbb P(|Z_{k,\theta}|>\varepsilon)\longrightarrow0
  \qquad(\forall\varepsilon>0).
\]

We use $\mathbb C\langle x_1,\ldots,x_r\rangle$ for noncommutative polynomials in self-adjoint variables $x_1, \ldots, x_r$.   For a word $w=i_1\cdots i_m$, put
\[
 |w|=m,\qquad
 x_w=x_{i_1}\cdots x_{i_m},\qquad
 x_\varnothing=1.
\]
Every $M_D$-valued polynomial of degree at most $d$ has a unique
expression
\[
  P(x)=\sum_{|w|\leq d}A_w\otimes x_w,
  \qquad
  \Lambda_d(P)=\sum_{|w|\leq d}\|A_w\|.
\]
Its involution is determined by
\(
 (A_w\otimes x_w)^*
 =A_w^*\otimes x_{w^{\rm rev}}.
\)
For $A_0,\ldots,A_V\in (M_D)_{\rm sa}$ and \(x=(x_1,\cdots, x_V)\), we use the linear pencil 
\[
  \mathcal P_A(x)
  =A_0\otimes1+\sum_{v=1}^V A_v\otimes x_v,
  \qquad
  \Lambda(A)=\sum_{v=0}^V\|A_v\|.
\]

If $E\subseteq A$ is an operator space, $M_D(E)$ carries the norm
inherited from $M_D\otimes_{\min}A$. For a linear map $u:E\to F$,
\[
  \|u\|_{\rm cb}
  =\sup_{D\geq1}\|\id_D\otimes u\|.
\]

Let $X^{(N)}$ and $x$ be self-adjoint $r$-tuples in tracial
$C^*$-probability spaces $(A_N,\tau_N)$ and $(A,\tau)$, respectively.
We say that $X^{(N)}$ converges \emph{strongly in probability} to $x$
if, for every scalar noncommutative polynomial $P$,
\[
 \tau_N(P(X^{(N)}))\longrightarrow\tau(P(x)),
 \qquad
 \|P(X^{(N)})\|\longrightarrow\|P(x)\|
\]
in probability \cite{Male2012}.

For a sequence $D_N\geq1$, convergence is \emph{uniform up to
coefficient dimension $D_N$ at bounded degree} if, for every fixed
$d,M$ and $\varepsilon>0$,
\[
 \sup_{\substack{1\leq D\leq D_N,\ \deg P\leq d\\
                  P\in M_D\otimes
                  \mathbb C\langle x_1,\ldots,x_r\rangle,\ 
                  \Lambda_d(P)\leq M}}
 \mathbb P\!\left(
   \big|\|P(X^{(N)})\|-\|P(x)\|\big|>\varepsilon
 \right)
 \longrightarrow0.
\]
If the supremum is over all $D\geq1$, we call this
\emph{complete bounded-degree convergence}.
\subsection{Graphs and Erd\H{o}s--R\'enyi models}
\label{subsec:random-graphs}

A graph $\Gamma=(V\Gamma,E\Gamma)$ is finite, simple, and
undirected. We write $u\sim_\Gamma v$ for adjacency, $A_\Gamma$ for
the zero-diagonal adjacency matrix. A clique is a subgraph whose vertices are pairwise
adjacent, and $\omega(\Gamma)$ denotes the clique number, i.e. the maximal size of cliques in $\Gamma$.

We use reduced graph products in the sense of \cite{CaspersFima}.
A \textit{graph-product semicircular} family $(s_v)_{v\in \Gamma}$ is
the canonical family in the reduced graph product of standard
semicircular probability spaces. Thus adjacent vertex algebras
commute and are tensor independent, while an edgeless graph gives free
independence. The concrete graph-product Fock space
 is introduced in \cref{sec:singlestrongconvergence}.

For $p\in[0,1]$, $\Gamma(k,p)$ denotes the \textit{Erd\H{o}s--R\'enyi graph} on
$[k]$ whose edges occur independently with probability $p$. If we index the vertices of $\Gamma(\infty, p)$ with the natural numbers than we may naturally see $\Gamma(k,p)$ as a random subgraph of $\Gamma(\infty, p)$ by identifying it with the first $k$ vertices. This way we can make sense of limits $k \rightarrow \infty$ that converge almost everywhere or that converge in probability by viewing $\Gamma(k,p)$ as subgraph of  $\Gamma(\infty, p)$. 

We write
$\mathbb P_\Gamma$ and $\mathbb E_\Gamma$ for its law and expectation.

In the multivariable construction we sample one graph $\Gamma_k$ on
\[
   V_k=[k]\times[r]
      =V_{k,1}\mathbin{\dot\cup}\cdots
       \mathbin{\dot\cup}V_{k,r},
   \qquad
   V_{k,a}=[k]\times\{a\},
\]
including all cross-block edges. Throughout the $q$-Toeplitz and  $q$-Gaussian model,
the edge probability is $p=|q|$.

For negative $q$ we also use the Clifford
$C^*$-algebra $\mathrm{Cliff}(V_k)$ generated by self-adjoint Clifford unitaries $(c_v)_{v\in V_k}$ satisfying
\[
 c_uc_v=-c_vc_u\quad(u\neq v),
 \qquad
 c_v^2=1,
\]
equipped with its canonical trace $\tau_{\rm Cl}$, which is characterized by $\tau_{\rm Cl}(1)=1$ and by vanishing on every nontrivial reduced word of Clifford unitaries.

\subsection{The $q$-Fock space and
$q$-Toeplitz algebras}
\label{subsec:q-fock-concise}

Fix $-1<q<1$, let $H=\mathbb C^r$, and let
$e_1,\ldots,e_r$ be its standard basis. On
\[
 \mathcal F_{\rm alg}(H)
 =\mathbb C\Omega_q
  \oplus\bigoplus_{n\geq1}^{\rm alg}H^{\otimes n},
\]
where $\Omega_q$ is the vacuum vector, the $q$-inner product on $H^{\otimes n}$ is
\[
 \langle\xi,\eta\rangle_q
 =\langle P_q^{(n)}\xi,\eta\rangle,
 \qquad
 P_q^{(n)}
 =\sum_{\sigma\in S_n}
 q^{\mathrm{inv}(\sigma)}U_\sigma.
\]
Here $U_\sigma$ denotes the natural permutation of tensor factors.
For $|q|<1$ these forms are strictly positive, and their completion is
the $q$-Fock space $\mathcal F_q(H)$
\cite{BozejkoSpeicher,DykemaNica}.

The left creation operator on $\mathcal F_q(H)$ is
\[
 \ell_q(h)\Omega_q=h,
 \qquad
 \ell_q(h)(\xi_1\otimes\cdots\otimes\xi_n)
 =h\otimes\xi_1\otimes\cdots\otimes\xi_n.
\]
Writing $\ell_{q,a}=\ell_q(e_a)$, we has
\[
 \ell_{q,a}^*\ell_{q,b}
 =\delta_{ab}1+q\ell_{q,b}\ell_{q,a}^*.
\]

Set
\[
 s_a^{(q)}
 =\ell_{q,a}+\ell_{q,a}^*,
 \qquad
 \mathcal T^{\rm red}_{q,r}
 =C^*(\ell_{q,1},\ldots,\ell_{q,r}),
\]
the latter is called the \textit{reduced $q$-Toeplitz algebra}. And write the \textit{$q$-Gaussian $C^*$-algebra} and \textit{$q$-Gaussian von Neumann algebra} respectively as
\[
 A_q(\mathbb C^r)
 =C^*(s_1^{(q)},\ldots,s_r^{(q)}),
 \qquad
 \Gamma_q(\mathbb R^r)
 =W^*(s_1^{(q)},\ldots,s_r^{(q)}).
\]

The vacuum functional
$\varphi_q(x)
 =\langle x\Omega_q,\Omega_q\rangle$,
 $x\in \mathcal T^{\rm red}_{q,r}$
is not tracial on $\mathcal T^{\rm red}_{q,r}$. Its restriction to
$A_q(\mathbb C^r)$ is a faithful trace, denoted by $\tau_q$
\cite[Proposition~2.3]{BozejkoKummererSpeicher}. We have 
\[
 \|s_a^{(q)}\|=\frac{2}{\sqrt{1-q}}\qquad \textrm{and}\qquad
 \tau_q(s_{i_1}^{(q)}\cdots s_{i_m}^{(q)})
 =\sum_{\pi\in\mathcal P_2(m)}
   q^{\mathrm{cr}(\pi)}
   \prod_{\{u,v\}\in\pi}\delta_{i_u,i_v}.
\]
Here $\mathcal P_2(m)$ is the set of pair partitions of $[m]$,
$\mathrm{cr}(\pi)$ is the crossing number of $\pi$, and the sum is
zero for odd $m$ \cite{BozejkoKummererSpeicher}.

Let $\mathcal T_{q,r}$ be the unital $*$-algebra generated by
$T_1,\ldots,T_r$ with \textit{the $q$-Toeplitz relations}
\[
 T_a^*T_b=\delta_{ab}1+qT_bT_a^*,
\]
and let $\mathcal T_{q,r}^u$ be its universal $C^*$-completion. We call $\mathcal T_{q,r}^u$ \textit{the universal $q$-Toeplitz algebra}. Then 
\[
\Phi_q:\mathcal T_{q,r}^u \rightarrow  \mathcal T^{\rm red}_{q,r}
\]
denotes the canononical quotient map.

\subsection{Ultaproduct and approximation techniques}

 For nonprincipal ultrafilter $\mathcal U$ and $C^*$-algebras $(A_k)$, their $C^*$-ultraproduct is 
 \[\prod_{k,\mathcal U}A_k=\ell^{\infty}((A_k))\bigg/\{(a_k):\lim_{k\to\mathcal{U}}\Vert a_k\Vert=0\}, \qquad \|(a_k)_{\mathcal U}\Vert=\lim_{k\to\mathcal U}\Vert a_k\Vert.\]
We use the analogous Hilbert-space ultraproduct notation. We recall the Choi-Effros lifting theorem that we will use: a complete positive contraction from a separable $C^*$-algebra $B$ to a quotient $B/J$ has a complete positive contraction lift to $B$ \cite{ChEf76}.  

The following consequence of exactness is needed in \cref{sec:multistrongconvergence,sec:strongconvergencematrix}.

\begin{lemma}[Fixed-family compression]
\label{lem:fixed-compression}
Let $A$ be a exact $C^*$-algebra and let $E\subseteq A$ be finite dimensional subspace. For
every $\delta>0$ there is $m=m(E,\delta)$ such that, for every
$D\geq1$ and $X\in M_D(E)$, there are contractions
$U,V\in M_{m,D}$ satisfying
\[
 \|(U\otimes1)X(V^*\otimes1)\|
 \geq(1-\delta)\|X\|.
\]
In particular, $m$ is independent of $D$ and $X$.
\end{lemma}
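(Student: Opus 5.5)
Exactness of $A$ gives, by Kirchberg's characterisation, a nuclear embedding: after faithfully representing $A\subseteq B(H)$, there are unital completely positive maps $\phi:A\to M_m$ and $\psi:M_m\to B(H)$ with $\|\psi\circ\phi|_E-\iota_E\|_{\rm cb}$ as small as we like. We fix $\eta>0$ (to be tied to $\delta$) and choose such $m$ and maps with $\|\psi\circ\phi|_E-\iota_E\|_{\rm cb}\le\eta$. Because the error is measured in cb-norm, the same $m$ works for every amplification level $D$. For $X\in M_D(E)$ this yields
\[
\|(\id_D\otimes\phi)(X)\|\;\ge\;\|(\id_D\otimes\psi\phi)(X)\|\;\ge\;(1-\eta)\|X\|,
\]
where the first inequality holds because $\psi$ is a complete contraction.

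Next we realise $\phi$ concretely. By Stinespring's theorem, $\phi(a)=W^*\pi(a)W$ with $W$ an isometry into the space of a representation $\pi$ of $A$. Since $\phi(a)$ is an $m\times m$ matrix, its norm is detected on unit vectors of $\mathbb C^m$. Concretely, $\|(\id_D\otimes\phi)(X)\|$ is the supremum of $|\langle (\id_D\otimes\pi)(X)(1\otimes W)\xi,(1\otimes W)\zeta\rangle|$ over unit vectors $\xi,\zeta\in\mathbb C^D\otimes\mathbb C^m$. The norm of $X$ is thus almost attained on vectors supported on the small space $\mathbb C^m$ in the second leg.

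The remaining step, and the main obstacle, is converting this into the stated form $(U\otimes1)X(V^*\otimes1)$ with $U,V\in M_{m,D}$. Such a compression acts only on the matrix leg $\mathbb C^D$, whereas the vectors above carry $m$-dimensional information in the algebra leg. The plan is to swap legs via Schmidt decompositions. Write a unit vector $\xi\in\mathbb C^D\otimes K$ as $\xi=\sum_{i=1}^m \xi_i\otimes k_i$; only $m$ terms appear because $K=\mathbb C^m$ here. Then $\xi=(V^*\otimes 1)\hat\xi$ with $V\in M_{m,D}$ a contraction sending $\xi_i\mapsto e_i$, and similarly for $\zeta$ with $U$. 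Evaluating,
\[
|\langle X\xi',\zeta'\rangle|=|\langle (U\otimes1)X(V^*\otimes1)\hat\xi,\hat\zeta\rangle|\le\|(U\otimes1)X(V^*\otimes1)\|.
\]
Here $\xi'=(1\otimes W)\xi$ is viewed as a vector in $\mathbb C^D\otimes H_\pi$, whose Schmidt rank in the $\mathbb C^D$ leg is at most $m$. We must also pass from $\pi$ back to the given representation of $A$. We handle this by taking $\pi$ to be the universal representation, noting that the norm of $(U\otimes1)X(V^*\otimes1)$ in $M_m(A)$ does not depend on the faithful representation used.

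Combining, $\|(U\otimes1)X(V^*\otimes1)\|\ge(1-\eta)\|X\|-\eta$ after normalising $\|X\|=1$ by homogeneity, since both sides scale linearly in $X$. Taking $\eta=\delta/2$ gives the bound with $m=m(E,\delta)$ independent of $D$ and $X$.

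One concern remains in the Schmidt step: the rank bound $m$ relies on $W\xi$ ranging over an $m$-dimensional subspace in the second leg. That holds, because $(1\otimes W)$ maps $\mathbb C^D\otimes\mathbb C^m$ into $\mathbb C^D\otimes W\mathbb C^m$. I expect this bookkeeping to be the step needing the most care.
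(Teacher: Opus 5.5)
Your proof is correct. Its skeleton is the standard argument behind the paper's citation \cite[Lemma~5.13]{MageedelaSalle}: a completely contractive map of $E$ into some $M_m$ that is bounded below by nearly $1$ at every matrix level, followed by a Schmidt-rank compression of the $\mathbb C^D$ leg. The difference lies only in where the map into $M_m$ comes from.

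The paper points to the local operator-space characterization \cite[Corollary~17.5]{Pisier2003}. This gives $u\colon E\to F\subseteq M_m$ with $\|u\|_{\rm cb}\le1$ and $\|u^{-1}\|_{\rm cb}\le1+\delta$. You instead take $\phi|_E$ from the nuclear-embedding characterization of exactness (Kirchberg's theorem, or the definition in some texts). There, the upgrade from point-norm approximation on a basis of $E$ to $\|\psi\phi|_E-\iota_E\|_{\rm cb}\le\eta$ is exactly the basis argument of Lemma~\ref{Lem=BtoCB}, and you should say so.

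Your Stinespring dilation and universal representation are not needed. Pick unit vectors $\xi,\zeta\in\mathbb C^D\otimes\mathbb C^m$ attaining $\|(\id_D\otimes\phi)(X)\|$, which is possible in finite dimensions. Let $V,U\in M_{m,D}$ be partial isometries whose initial spaces contain the first-leg supports of $\xi,\zeta$; these supports have dimension at most $m$. Then use $(U\otimes1)(\id_D\otimes\phi)(X)(V^*\otimes1)=(\id_m\otimes\phi)\bigl((U\otimes1)X(V^*\otimes1)\bigr)$ together with $\|\phi\|_{\rm cb}\le1$. This yields $(1-\eta)\|X\|$ directly, without your additive $-\eta$.

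Also, $V$ must send an orthonormal Schmidt basis $f_i$ to $e_i$. Sending the unnormalized components $\xi_i\mapsto e_i$, as you wrote, need not give a contraction.

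As for what each route buys: the paper's $d_{\rm cb}$ formulation is purely operator-space theoretic. It applies verbatim to any finite-dimensional operator space that is $(1+\delta)$-completely isomorphic to subspaces of matrix algebras for every $\delta$, with no ambient $C^*$-algebra or completely positive maps. Your route instead works with concrete ccp maps defined on all of $A$.
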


This is \cite[Lemma~5.13]{MageedelaSalle}; it  also follows from the local operator space characterization of exactness \cite[Corollary~17.5]{Pisier2003}.

A separable $C^*$-algebra is \emph{MF} if it embeds into
\[\prod_{k}M_{n_k}\bigg/ \bigoplus_{k} M_{n_k},\qquad \bigoplus_{k} M_{n_k}=\{(a_k):a_k\in M_{n_k},\Vert a_k\Vert\longrightarrow 0\}.  \]
 \subsection{Normalized GUE matrices}\label{subsec:gue}

A normalized $N\times N$ GUE matrix $H^{(N)}$ is a random self-adjoint matrix which has independent  real Gaussian random variables on the 
diagonal entries of variance $N^{-1}$ and independent real and
imaginary parts of Gaussian random variables above the diagonal of variance $(2N)^{-1}$. Thus
\(
 \mathbb E\circ\tr_N\bigl((H^{(N)})^2\bigr)=1,
\)
and its limiting law is the standard semicircular distribution on
$[-2,2]$. All GUE matrices appearing together are independent and
independent of the graph unless stated otherwise; their joint law is
denoted by $\mathbb P_{\Gamma,{\rm GUE}}$.

\section{Strong convergence of $q$-Gaussian models}\label{sec:singlestrongconvergence}

Let $\Gamma$ be a finite graph. Let $\mathcal{F}_\Gamma$ be the graph product Fock space of the graph product of semicircular variables with respect to the vacuum state. 
\[
\mathcal{F}_\Gamma  = \bigoplus_{\mathcal{A}^+_\Gamma}  \mathbb{C} = \ell^2(\mathcal{A}^+_\Gamma), 
\]
where $\mathcal{A}^+_\Gamma$ is the right-angled Artin monoid consisting of all words with letters in $\Gamma$; i.e.  without formal inverses.  For $w \in \mathcal{A}^+_\Gamma$ we let $\delta_w$ be the unit vector  that is 1 in the   summand indexed by $w$ and which is 0 in all other summands. Denote $\Omega_{\Gamma}=\delta_{e}$ as the vacuum vector of $\mathcal F_{\Gamma}$.   

\subsection{Graph Product Model}
For $v \in \Gamma$ we set the left creation operator
\[
\ell_v: \mathcal{F}_\Gamma \rightarrow \mathcal{F}_\Gamma: \delta_w \mapsto \delta_{vw}.
\]
Then $\ell_v^\ast$ is the left annihilation operator. We have the graph product creation relation
\[\ell_w^*\ell_v=\delta_{wv}I+(A_{\Gamma})_{wv}\ell_v\ell_w^*.\]
Then $x^{\Gamma_k}_v=\ell_v+\ell_v^*$, $v\in\Gamma$ are the graph product semicirculars. We also set 
\[
L_\Gamma = \frac{1}{\sqrt{\vert \Gamma \vert }} \sum_{v \in \Gamma}  \ell_v. 
\]
We define the following annihilation-gradient map: 
\[
D_\Gamma: \mathcal{F}_\Gamma \rightarrow  \ell^2 \Gamma \otimes \mathcal{F}_\Gamma\qquad \xi \mapsto \sum_{v \in \Gamma}  \delta_v \otimes \ell^*_v \xi.
\]
It is easy to see that $
D_\Gamma^*(\delta_v\otimes \eta)=\ell_v\eta$, for $\eta\in \mathcal{F}_{\Gamma}$, and  $D_{\Gamma}^*D_{\Gamma}=\sum_{v\in \Gamma}\ell_v\ell^*_v$.
\subsection{Clifford-twisted Graph Product Model}

Let $\{c_v\}_{v\in\Gamma}$ be self-adjoint
Clifford unitaries satisfying
\[
c_v^2=1,\qquad c_uc_v=-c_vc_u
\quad (u\ne v).
\]
Denote the (finite dimensional) von Neumann algebra geneated by them as $\left(\mathrm{Cliff}(\Gamma),\tau_{\mathrm{Cl}}\right)$ where $\tau_{\textrm{Cl}}$ is the usual tracial state   that is 0 on elements of nonzero degree.  
In this case define the Clifford-twisted left creation operator
\[
\widetilde \ell_v:=c_v\otimes \ell_v:L^2(\mathrm{Cliff}(\Gamma),\tau_{\mathrm{Cl}})\otimes\mathcal{F}_{\Gamma}\to L^2(\mathrm{Cliff}(\Gamma),\tau_{\mathrm{Cl}})\otimes\mathcal{F}_{\Gamma}.
\]
Denote $\Omega_{\mathrm{Cl}}$ as the vacuum vector of $L^2(\mathrm{Cliff}(\Gamma),\tau_{\mathrm{Cl}})$, then the vacuum vector of $L^2(\mathrm{Cliff}(\Gamma),\tau_{\mathrm{Cl}})\otimes\mathcal F_{\Gamma}$ is $\Omega_{\mathrm{Cl}}\otimes\Omega_{\Gamma}$.
Then $\widetilde{\ell}_v^\ast$ is the left annihilation operator. We have the Clifford-twisted graph product creation relation
\[\widetilde{\ell}_w^*\widetilde{\ell}_v=\delta_{wv}I-(A_{\Gamma})_{wv}\widetilde{\ell}_v\widetilde{\ell}_w^*.\]
Then $x^{\Gamma,\rm{Cl}}_v=\widetilde{\ell}_v+\widetilde{\ell}_v^*$, $v\in\Gamma$ are the Clifford-twisted graph product semicirculars. Let
\[
\widetilde{L}_\Gamma=\frac1{\sqrt{|\Gamma|}}\sum_{v\in\Gamma} \widetilde \ell_v.
\]
We define the following annihilation-gradient map: 
\[
\widetilde{D}_\Gamma: L^2(\mathrm{Cliff}(\Gamma),\tau_{\mathrm{Cl}})\otimes\mathcal{F}_\Gamma \rightarrow  \ell^2 \Gamma \otimes L^2(\mathrm{Cliff}(\Gamma),\tau_{\mathrm{Cl}})\otimes \mathcal{F}_\Gamma\qquad \xi \mapsto \sum_{v \in \Gamma}  \delta_v \otimes \widetilde{\ell}^*_v \xi. 
\]
Again,
$\widetilde{D}_\Gamma^*(\delta_v\otimes \eta)=\widetilde{\ell}_v\eta$, for $\eta\in \mathcal{F}_{\Gamma}$, and   $\widetilde{D}_{\Gamma}^*\widetilde{D}_{\Gamma}=\sum_{v\in \Gamma}\widetilde{\ell}_v\widetilde{\ell}^*_v=\sum_{v\in\Gamma}\ell_v\ell_v^*$.
\subsection{Approximate q-Toeplitz relation with the graph product and Clifford-twisted graph product model}
Consider the adjacency matrix as operator
\[
A_\Gamma: \ell^2 \Gamma \rightarrow \ell^2\Gamma: \delta_v \mapsto \sum_{w \in \Gamma}  (A_\Gamma)_{wv} \delta_w.
\]
For $-1<q<1$, put
\[
 \varepsilon_q=
 \begin{cases}
   1,  & q\geq0,\\
  -1,  & q<0.
 \end{cases}
\]
For $\Gamma\sim\Gamma(k,|q|)$ define
\[
A_\Gamma^{(q)}=\varepsilon_qA_\Gamma,\qquad B_\Gamma^{(q)}
=A_\Gamma^{(q)}-q(J-I).
\]
Here $J$ is the all-ones matrix, and $I$ is the identity matirx. 

\begin{remark}\label{Rmk=MeanZeroEtc}
  $B_{\Gamma}^{(q)}$ is symmetric matrix. The off-diagonal entries of $B_\Gamma^{(q)}$ are centered and have
variance $|q|(1-|q|)$. In particular, when $q\geq 0$, every off-diagonal entry of $B_{\Gamma}^q$ is mean-zero Bernoulli random variable. 
  
\end{remark}

 We now have the following.

\begin{lemma}\label{Lem=LqToeplitz}
For $\Gamma$ a finite (deterministic) graph   with $k$ vertices and $-1 \leq q \leq 1$ we have 
\begin{equation}\label{Eqn=LqToeplitz} 
   L_\Gamma^\ast L_\Gamma =  q L_\Gamma L_\Gamma^\ast  + I  - \frac{q}{k}  \sum_{v \in \Gamma} \ell_v \ell_v^\ast  +  D_\Gamma^\ast (   \frac{ B_\Gamma}{k} \otimes I ) D_\Gamma. 
\end{equation}
and 
\begin{equation}\label{Eqn=QToeplitz}
\widetilde{L}_\Gamma^\ast \widetilde{L}_\Gamma =  q \widetilde{L}_\Gamma \widetilde{L}_\Gamma^\ast  + I  - \frac{q}{k}  \sum_{v \in \Gamma} \widetilde{\ell}_v \widetilde{\ell}_v^\ast  +\widetilde{D}_\Gamma^\ast (   \frac{ \widetilde{B}_\Gamma}{k} \otimes I ) \widetilde{D}_\Gamma. 
\end{equation}
\end{lemma}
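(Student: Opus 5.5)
Expand both sides in terms of the generators and compare them coefficient by coefficient. By definition,
\[
L_\Gamma^*L_\Gamma=\frac1k\sum_{w,v\in\Gamma}\ell_w^*\ell_v .
\]
Apply the graph product creation relation $\ell_w^*\ell_v=\delta_{wv}I+(A_\Gamma)_{wv}\ell_v\ell_w^*$ termwise. This gives
\[
L_\Gamma^*L_\Gamma=I+\frac1k\sum_{w,v}(A_\Gamma)_{wv}\,\ell_v\ell_w^* .
\]
Here the diagonal $w=v$ contributes exactly $\frac1k\cdot k\,I=I$, and the diagonal of $A_\Gamma$ vanishes.

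The next step is to express quadratic forms in the gradient map. Using $D_\Gamma^*(\delta_v\otimes\eta)=\ell_v\eta$ and $D_\Gamma\xi=\sum_w\delta_w\otimes\ell_w^*\xi$, for any $k\times k$ matrix $M$ we get
\[
D_\Gamma^*(M\otimes I)D_\Gamma=\sum_{v,w}M_{vw}\,\ell_v\ell_w^* ,
\]
with the indexing convention of $A_\Gamma$ as an operator, $\delta_w\mapsto\sum_v M_{vw}\delta_v$. Symmetry of $A_\Gamma$, $J$, $I$ removes any transpose ambiguity. Consequently the sum above equals $D_\Gamma^*(\frac{A_\Gamma}{k}\otimes I)D_\Gamma$.

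Now decompose $A_\Gamma=B_\Gamma+q(J-I)$. For $q\ge0$ this is just the definition $B_\Gamma=B^{(q)}_\Gamma$ with $\varepsilon_q=1$. Then:
\begin{itemize}
\item $D_\Gamma^*(J\otimes I)D_\Gamma=\sum_{v,w}\ell_v\ell_w^*=k\,L_\Gamma L_\Gamma^*$;
\item $D_\Gamma^*(I\otimes I)D_\Gamma=\sum_v\ell_v\ell_v^*$.
\end{itemize}
Dividing by $k$ gives the terms $qL_\Gamma L_\Gamma^*-\frac qk\sum_v\ell_v\ell_v^*+D_\Gamma^*(\frac{B_\Gamma}k\otimes I)D_\Gamma$, which is \eqref{Eqn=LqToeplitz}.

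For \eqref{Eqn=QToeplitz} the computation is identical, now using the Clifford-twisted relation $\widetilde\ell_w^*\widetilde\ell_v=\delta_{wv}I-(A_\Gamma)_{wv}\widetilde\ell_v\widetilde\ell_w^*$ and the analogous formula for $\widetilde D_\Gamma$. This produces the matrix $-A_\Gamma=\varepsilon_qA_\Gamma=A^{(q)}_\Gamma$ in the quadratic form. Decomposing it as $\widetilde B_\Gamma+q(J-I)$, with $\widetilde B_\Gamma=B^{(q)}_\Gamma$ for $q<0$, gives the stated identity.

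Verifying the twisted relation itself is a short check; it is assumed as stated earlier in the paper:
\begin{itemize}
\item for $w\ne v$, $c_wc_v=-c_vc_w$ combines with the untwisted relation;
\item for $w=v$, $c_v^2=1$.
\end{itemize}

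The main obstacle is bookkeeping rather than substance. There are three points to watch:
\begin{enumerate}
\item The lemma writes $B_\Gamma$ (resp. $\widetilde B_\Gamma$) for whichever of $A_\Gamma-q(J-I)$ or $-A_\Gamma-q(J-I)$ matches the sign convention. Since this is a deterministic identity valid for every $q\in[-1,1]$, I would interpret $B_\Gamma:=A_\Gamma-q(J-I)$ and $\widetilde B_\Gamma:=-A_\Gamma-q(J-I)$. Both coincide with $B^{(q)}_\Gamma$ in the relevant sign range.
\item The diagonal terms of $J$ must be correctly subtracted, via the $-qI$ inside $q(J-I)$ yielding $-\frac qk\sum\ell_v\ell_v^*$.
\item The index order in $\ell_v\ell_w^*$ must be consistent with the operator convention for $A_\Gamma$.
\end{enumerate}
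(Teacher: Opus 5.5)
Your proposal is correct and follows essentially the same computation as the paper: expand $L_\Gamma^*L_\Gamma$ via the (twisted) graph-product creation relation, identify the off-diagonal sum with the quadratic form $D_\Gamma^*(\,\cdot\,\otimes I)D_\Gamma$, and split off $q(J-I)$ to produce $qL_\Gamma L_\Gamma^*-\frac qk\sum_v\ell_v\ell_v^*$. The only cosmetic difference is that you apply linearity of $M\mapsto D_\Gamma^*(M\otimes I)D_\Gamma$ to $A_\Gamma=B_\Gamma+q(J-I)$, whereas the paper expands each term separately; your reading $B_\Gamma=A_\Gamma-q(J-I)$, $\widetilde B_\Gamma=-A_\Gamma-q(J-I)$ is exactly what the paper's proof uses.
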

\begin{proof} Proof of \eqref{Eqn=LqToeplitz}.  
The graph-product creation operators satisfy
\[
\ell_u^*\ell_v=
\delta_{uv}I+
(A_\Gamma)_{uv}\ell_v\ell_u^*.
\]
Therefore
\[
\begin{aligned}
L_\Gamma^*L_\Gamma&=
\frac1k
\sum_{u,v\in \Gamma}
\ell_u^*\ell_v=
\frac1k\sum_{u,v\in \Gamma}
\left(\delta_{uv}I+(A_\Gamma)_{uv}\ell_v\ell_u^*\right)    =I+\frac1k\sum_{u,v\in \Gamma}(A_\Gamma)_{uv}\ell_v\ell_u^* .
\end{aligned}
\]
Since $A_\Gamma$ has zero diagonal, we can write this identity as
\[
L_\Gamma^*L_\Gamma=I+\frac1k
\sum_{\substack{u,v\in \Gamma\\ u\ne v}}(A_\Gamma)_{uv}\ell_v\ell_u^* .
\]
Next, we have
\[
L_\Gamma L_\Gamma^*=
\frac1k\sum_{u,v\in \Gamma}\ell_v\ell_u^*.
\]
Hence
\[
qL_\Gamma L_\Gamma^*-
\frac qk
\sum_{v\in \Gamma}
\ell_v\ell_v^*=\frac qk
\sum_{\substack{u,v\in V\Gamma u\ne v}}
\ell_v\ell_u^*.
\]
Since $B_\Gamma=A_\Gamma-q(J-I)$,
 for $u\ne v$,we have
\[
(B_\Gamma)_{uv}=
(A_\Gamma)_{uv}-q\quad\textrm{and }\quad
(B_\Gamma)_{vv}=0.
\]
We now compute the $D_\Gamma$-term. By $D_\Gamma\xi=
\sum_{u\in \Gamma}
\delta_u\otimes \ell_u^*\xi$,
and
$D_\Gamma^*(\delta_v\otimes \eta)=
\ell_v\eta$,
we have
\[
D_\Gamma^*
\left(\frac{B_\Gamma}{k}\otimes I\right)
D_\Gamma=D_\Gamma^*
\left(\frac1k\sum_{u,v\in \Gamma}
(B_\Gamma)_{vu}\,
\delta_v\otimes \ell_u^*
\right)=\frac1k\sum_{u,v\in \Gamma}
(B_\Gamma)_{vu}\ell_v\ell_u^*.
\]
Using $(B_\Gamma)_{vv}=0$, we obtain
\[
D_\Gamma^*\left(
\frac{B_\Gamma}{k}\otimes I\right)
D_\Gamma=\frac1k
\sum_{\substack{u,v\in \Gamma\\ u\ne v}}
\bigl((A_\Gamma)_{uv}-q\bigr)
\ell_v\ell_u^*.
\]
Consequently,
\[
\begin{aligned}
qL_\Gamma L_\Gamma^*-\frac qk
\sum_{v\in \Gamma}\ell_v\ell_v^*+
D_\Gamma^*\left(\frac{B_\Gamma}{k}\otimes I\right)
D_\Gamma&=
\frac qk\sum_{\substack{u,v\in \Gamma\\u\ne v}}\ell_v\ell_u^*+
\frac1k\sum_{\substack{u,v\in \Gamma\\u\ne v}}   \bigl((A_\Gamma)_{uv}-q\bigr)
\ell_v\ell_u^*                          \\&=
\frac1k\sum_{\substack{u,v\in \Gamma\\u\ne v}}
(A_\Gamma)_{uv}\ell_v\ell_u^*.
\end{aligned}
\]
Adding the identity term $I$ we get
\[
L_\Gamma^*L_\Gamma=qL_\Gamma L_\Gamma^*+I-
\frac qk
\sum_{v\in \Gamma}
\ell_v\ell_v^*+
D_\Gamma^*\left(
\frac{B_\Gamma}{k}\otimes I\right)D_\Gamma,
\]
which gives the first identity.

\vspace{0.3cm}

Proof of \eqref{Eqn=QToeplitz}. 
For Clifford-twisted graph product model we have
\[
\widetilde{\ell}_u\widetilde{\ell}_v^*=\delta_{uv}I-(A_{\Gamma})_{uv}\widetilde{\ell}_v\widetilde{\ell}_u^*.
\]
Thus the same proof works with the Clifford-twisted adjacency matrix $-A_{\Gamma}$
in place of $A_\Gamma$. Equivalently, we just replace $B_{\Gamma}$ by $\widetilde{B}_{\Gamma}$,
then we get
\[
\widetilde L_\Gamma^*\widetilde L_\Gamma=
q\widetilde L_\Gamma\widetilde L_\Gamma^*+I-\frac qk
\sum_{v\in \Gamma}
\widetilde\ell_v\widetilde\ell_v^*
+\widetilde D_\Gamma^*
\left(
\frac{\widetilde{B}_\Gamma}{k}\otimes I
\right)
\widetilde D_\Gamma,
\]
which gives the second identity.

\end{proof}

Next we turn to the probabilistic setting. We first make the following remark.

\begin{remark}\label{Rmk=WhyThisWorks}
Assume $0 \leq q < 1$. 
We interpret the terms in \eqref{Eqn=LqToeplitz}   as follows:
\[
\overbrace{ L_\Gamma^\ast L_\Gamma =  q L_\Gamma L_\Gamma^\ast  + I }^{q-\textrm{Toeplitz relation}} -\overbrace{ \frac{q}{k}  \sum_{v \in \Gamma} \ell_v \ell_v^\ast}^{\textrm{remainder 1}}  + \overbrace{ D_\Gamma^\ast (   \frac{B_\Gamma}{k} \otimes 1 ) D_\Gamma}^{\textrm{remainder 2}}. 
\]
We will show in the remainder of this section  that if $\Gamma = \Gamma(k,q)$ and we let $k \rightarrow \infty$ then the two remainder terms will go to 0 with high probability and the $q$-Toeplitz relation remains. This is essentially why our proof works. The analogous observation holds for   \eqref{Eqn=QToeplitz} for $-1 < q < 0$.
\end{remark}

We first consider the necessary estimates that control remainder 1 of Remark \ref{Rmk=WhyThisWorks}. 

\begin{remark}[Starting clique]
 For $w \in \mathcal A^+_\Gamma$ there is a uniquely defined largest clique $\mathrm{Init}_{\Gamma}(w)$ such that $\vert \mathrm{Init}_{\Gamma}(w) w \vert = \vert w \vert - \vert \mathrm{Init}_{\Gamma}(w) \vert$, i.e. $\mathrm{Init}_{\Gamma}(w)$ is the largest clique at the start of $w$. 
\end{remark}

The following lemma is now straightforward and we omit the proof. 

\begin{lemma}\label{numberoperator}
We have for $w \in \mathcal{A}_\Gamma^+$ that $\sum_{v \in \Gamma} \ell_v \ell_v^\ast  \:  \delta_w = \vert \mathrm{Init}_{\Gamma}(w)\vert \delta_w$.  In particular, 
\[
\bigg\Vert \sum_{v \in \Gamma} \ell_v \ell_v^\ast \bigg\Vert = \bigg\Vert \sum_{v \in \Gamma} \widetilde{\ell}_v \widetilde{\ell}_v^\ast \bigg\Vert=\omega(\Gamma).
\]
\end{lemma}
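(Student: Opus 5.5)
The plan is to show that each $\ell_v\ell_v^*$ is the orthogonal projection onto $\overline{\mathrm{span}}\{\delta_w : w\in v\mathcal A^+_\Gamma\}$, i.e.\ onto the span of words that can begin with the letter $v$. Once this is established, the sum $\sum_v \ell_v\ell_v^*$ is diagonal in the orthonormal basis $(\delta_w)_w$. Its eigenvalue at $\delta_w$ is the number of letters $v$ that can be moved to the front of $w$.

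First, $\ell_v$ is an isometry, since $\ell_v^*\ell_v=I$ by the graph-product creation relation with $u=v$. This is also clear directly: left multiplication by $v$ is injective on the right-angled Artin monoid, by cancellativity, so $\delta_w\mapsto\delta_{vw}$ maps the orthonormal basis injectively into itself. Hence $\ell_v\ell_v^*$ is the projection onto the range of $\ell_v$, which is the closed span of $\{\delta_{vw'}\}$. So $\ell_v\ell_v^*\delta_w=\delta_w$ if $w=vw'$ in $\mathcal A^+_\Gamma$, and $0$ otherwise. Summing over $v$ gives $\sum_v\ell_v\ell_v^*\delta_w=\#\{v: w\in v\mathcal A^+_\Gamma\}\,\delta_w$.

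The only genuinely combinatorial step, and the main obstacle, is identifying this set with $\mathrm{Init}_\Gamma(w)$. I would invoke the normal form theory for trace monoids (Cartier--Foata):
\begin{itemize}
\item a letter $v$ is a left divisor of $w$ if and only if some occurrence of $v$ in a representative word of $w$ is preceded only by letters adjacent to $v$, i.e.\ commuting with it;
\item any two distinct such first letters $u,v$ commute, so they are adjacent, because otherwise the two leftmost occurrences could not both be shuffled to the front;
\item consequently the set of left divisors that are letters forms a clique $C$, and $w=(\prod_{v\in C}v)\,w''$, so $|C|$ is exactly the size of the largest initial clique, as in the definition of $\mathrm{Init}_\Gamma(w)$.
\end{itemize}
Conversely, every letter of an initial clique is a left divisor. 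This gives $\sum_v\ell_v\ell_v^*\,\delta_w=|\mathrm{Init}_\Gamma(w)|\,\delta_w$.

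For the norm, a diagonal operator has norm equal to the supremum of its eigenvalues, and $|\mathrm{Init}_\Gamma(w)|\le\omega(\Gamma)$. Equality is attained at $w=v_1\cdots v_m$ for a maximum clique $\{v_1,\dots,v_m\}$, so $\|\sum_v\ell_v\ell_v^*\|=\omega(\Gamma)$. For the Clifford-twisted case, $\widetilde\ell_v\widetilde\ell_v^*=c_vc_v^*\otimes\ell_v\ell_v^*=1\otimes\ell_v\ell_v^*$ because $c_v$ is a self-adjoint unitary. Therefore $\sum_v\widetilde\ell_v\widetilde\ell_v^*=1\otimes\sum_v\ell_v\ell_v^*$, which has the same norm $\omega(\Gamma)$.
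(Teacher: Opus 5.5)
Your proposal is correct. Since each $\ell_v$ is an isometry, $\ell_v\ell_v^*$ is the projection onto $\overline{\mathrm{span}}\{\delta_{vw'}\}$. The letters that are left divisors of $w$ pairwise commute, so they form exactly the clique $\mathrm{Init}_\Gamma(w)$. The Clifford twist drops out because $c_vc_v^*=1$. The paper omits this proof as straightforward, and your argument is the direct verification it has in mind.
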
 

In case of the random graph $\Gamma = \Gamma(k, q)$  we have good control over the maximal clique size $\omega(\Gamma(k,q))$.  In fact, it is a classical result by Bollob\'as and Erd\H{o}s  \cite[Corollary 1]{BollobasErdos} that  $\omega(\Gamma(k,q))$ is highly concentrated around only two neighbouring natural numbers that are of $\order(\log(k))$.  Here we use a slightly different statement about the tail bound that can be found in \cite[Section 10.2, p. 184]{AlSp}. Its proof is only stated for  $q = \frac{1}{2}$ but holds verbatim for other values of $q$ as is also mentioned in \cite{AlSp}. We give the proof for convenience of the reader.

\begin{lemma}\label{Lem=CliqueTailBound}
Let $0<q<1$. For every constant
$C_q>\frac{2}{\log(1/q)}$,
there exists a constant $c_q>0$ such that
\[
\bbP\!\left(\omega(\Gamma(k,q))\ge C_q\log k\right)
\le
\exp\!\left(-c_q \log^2 k\right)
\]
for all sufficiently large $k$.
\end{lemma}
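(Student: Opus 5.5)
We use the first moment method with a union bound over all vertex subsets of a given size. Set $m=\lceil C_q\log k\rceil$. If $\omega(\Gamma(k,q))\ge C_q\log k$, then some $m$-element subset of $[k]$ spans a clique. Each $m$-subset is a clique with probability $q^{\binom m2}$, since its $\binom m2$ edges are present independently. Markov's inequality applied to the number of $m$-cliques therefore gives
\[
\bbP\!\left(\omega(\Gamma(k,q))\ge C_q\log k\right)\le \binom km q^{\binom m2}\le \left(\frac{k e}{m}\,q^{(m-1)/2}\right)^{m},
\]
using $\binom km\le (ke/m)^m$.

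Next we estimate the base of this power. We have $q^{(m-1)/2}=\exp\bigl(-\tfrac{m-1}{2}\log(1/q)\bigr)$, and $m-1\ge C_q\log k-1$. Hence
\[
\frac{ke}{m}\,q^{(m-1)/2}\le \frac{e\,q^{-1}}{m}\,k^{\,1-\frac{C_q\log(1/q)}{2}}.
\]
The factor $q^{-1}$ absorbs the $-1$ and the ceiling. Write $\eta=\frac{C_q\log(1/q)}{2}-1$, which is strictly positive precisely because $C_q>2/\log(1/q)$. The base is then at most $e q^{-1}k^{-\eta}/m\le k^{-\eta}$ for all sufficiently large $k$.

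It follows that the probability is at most $k^{-\eta m}=\exp(-\eta m\log k)\le \exp(-\eta C_q\log^2 k)$. This gives the claim with $c_q=\eta C_q>0$. Any smaller positive constant also works if one prefers slack to absorb the constants above.

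The only delicate point is the bookkeeping of the integer rounding and the lower-order factors $e/m$ and $q^{-1}$. These are harmless because they contribute only $O(1)$ or $O(\log\log k)$ to the exponent of the base, while the gain is $\eta\log k$ per factor. They are handled by taking $k$ large. No concentration beyond the first moment is needed, since the statement is only an upper tail bound.
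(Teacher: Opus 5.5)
Your proof is correct and is essentially the paper's argument: Markov's inequality applied to the number of $m$-cliques with $m=\lceil C_q\log k\rceil$, followed by the estimate $\binom km\le (ek/m)^m$. The only difference is bookkeeping. You bound the base $\frac{ke}{m}q^{(m-1)/2}$ directly by $k^{-\eta}$ instead of expanding the logarithm asymptotically, and this yields the explicit constant $c_q=C_q\bigl(\tfrac{C_q\log(1/q)}{2}-1\bigr)$.
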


\begin{proof}
  Let $m=\lceil C_q \log k \rceil$.
If $\omega(\Gamma(k,q))\ge m$, then the graph contains an $m$-clique. Hence, by   the Markov’s inequality,
\[
\mathbb{P}(\omega(\Gamma(k,q))\ge m)
\le\mathbb{E}[X_m],
\]
where $X_m$ denotes the number of $m$-cliques in $\Gamma(k,q)$.

Since every $m$-subset of vertices forms a clique with probability
$q^{\binom{m}{2}}$, we have
\[
\mathbb{E}[X_m]
=
\binom{k}{m}
q^{\binom{m}{2}}.
\]
Using the elementary estimate $\binom{k}{m}
\le\left(\frac{ek}{m}\right)^m$,
we obtain
\[
\mathbb{P}(\omega(\Gamma(k,q))\ge m)
\le\left(\frac{ek}{m}\right)^m
q^{m(m-1)/2}.
\]
Taking logarithms gives
\[
\log\mathbb{P}(\omega(\Gamma(k,q))\ge m)
\le
m\log\!\left(\frac{ek}{m}\right)
-\frac{\log(1/q)}{2}m(m-1).
\]
Since $m=C_q\log k+\mathcal O(1)$,
we have
\[
m\log\!\left(\frac{ek}{m}\right)
=C_q\log^2 k+\mathcal{O}(\log k\log\log k),
\]
while
\[
\frac{\log(1/q)}{2}m(m-1)
=
\frac{C_q^2\log(1/q)}{2}\log^2 k
+\mathcal O(\log k).
\]
Therefore
\[
\log\mathbb{P}(\omega(\Gamma (k,q))\ge m)
\le\left(C_q-\frac{C_q^2\log^2(1/q)}{2}\right)
\log^2 k
+    \order(\log k \: \log \log k).
\]
Since
$C_q>\frac{2}{\log(1/q)}$,
the coefficient $C_q-\frac{C_q^2\log(1/q)}{2}$
is strictly negative. Hence there exists $c_q>0$ such that
\[
\log\mathbb{P}(\omega(\Gamma(k,q))\ge m)
\le-c_q\log^2 k
\]
for all sufficiently large $k$. Exponentiating both sides yields
\[
\mathbb{P}\!\left(\omega(\Gamma(k,q))\ge C_q\log k\right)
\le\exp\!\left(-c_q\log^2 k\right).
\]

\end{proof}

To control remainder 2 in Remark \ref{Rmk=WhyThisWorks} we recall the following. Note that by Remark \ref{Rmk=MeanZeroEtc} we are indeed in the setting of the cited references.

\begin{theorem}[Theorem A of \cite{BaiYin} and  Theorem 1.4 of \cite{Tro12}]\label{Thm=RandomMatrixTailBound} 
For $0\leq q\leq 1$, we have 
\[
\lim_{k \rightarrow \infty }k^{-\frac{1}{2}}\Vert B_{\Gamma(k,q)} \Vert =  2 \sqrt{q (1-q)}.
\] 
in probability. Further, there exists $b_q>0$ and $T>0$ such that for every $t > T$ and $k \in \mathbb{N}_{\geq 1}$ we have 
\[
   \mathbb{P}\left( \Vert B_{\Gamma(k,q)}  \Vert   >  t \right) \leq  2k \exp\left( -\frac{b_qt^2}{k + t} \right).
\] 
\end{theorem}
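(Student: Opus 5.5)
This theorem is essentially a citation, so the plan is to reduce it to the two cited results rather than to prove random matrix estimates from scratch. The reduction starts from Remark~\ref{Rmk=MeanZeroEtc}. For $0\le q\le 1$, $B_{\Gamma(k,q)}=A_{\Gamma(k,q)}-q(J-I)$ is a real symmetric $k\times k$ matrix with zero diagonal. Its off-diagonal entries $b_{uv}=\mathbf 1_{u\sim v}-q$, $u<v$, are independent, centered, bounded by $\max(q,1-q)\le 1$, and have common variance $\sigma^2=q(1-q)$. This is exactly a Wigner matrix with bounded entries and zero diagonal, which is the setting of both references. The degenerate cases $q\in\{0,1\}$ give $B=0$, so both statements are trivial there (the limit $2\sqrt{q(1-q)}=0$, and the tail event is empty for $t>0$).

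For the almost sure, hence in-probability, limit, I would invoke Bai--Yin \cite[Theorem~A]{BaiYin}. For a Wigner matrix whose entries have mean zero, variance $\sigma^2$ and finite fourth moment (here bounded), it gives $k^{-1/2}\lambda_{\max}(B)\to 2\sigma$ and $k^{-1/2}\lambda_{\min}(B)\to-2\sigma$ almost surely. Since $\|B\|=\max(\lambda_{\max},-\lambda_{\min})$, this yields $k^{-1/2}\|B\|\to 2\sqrt{q(1-q)}$. The only check needed is that a zero diagonal is allowed: a diagonal term changes the norm by at most $\max_v|b_{vv}|$, which is $o(\sqrt k)$, so it is irrelevant either way.

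For the tail bound, I would apply the matrix Bernstein inequality \cite[Theorem~1.4]{Tro12}. The decomposition is $B=\sum_{u<v}X_{uv}$ with $X_{uv}=b_{uv}(E_{uv}+E_{vu})$. These summands are independent, self-adjoint and centered, with $\|X_{uv}\|\le R:=1$. The variance proxy is $\sigma_B^2=\bigl\|\sum_{u<v}\mathbb E X_{uv}^2\bigr\|$. Since $X_{uv}^2=b_{uv}^2(E_{uu}+E_{vv})$, the sum $\sum\mathbb E X_{uv}^2$ equals $q(1-q)(k-1)I$, so $\sigma_B^2\le k$. Bernstein then gives
\[
\mathbb P(\|B\|\ge t)\le 2k\exp\!\Bigl(-\frac{t^2/2}{\sigma_B^2+Rt/3}\Bigr)\le 2k\exp\!\Bigl(-\frac{t^2/2}{k+t}\Bigr),
\]
so one may take $b_q=1/2$ (or the sharper $b_q=\frac12\min(1,1/(q(1-q)))$), and the bound holds for every $t>0$, hence for any $T$.

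The main obstacle is only bookkeeping: matching normalisations and hypotheses (zero diagonal, real symmetric entries, the factor $2k$ from the two-sided, dimension-$k$ Bernstein bound) with the cited statements. No genuinely new estimate is needed.
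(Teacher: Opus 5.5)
Your proposal is correct and follows the same route as the paper, which cites Bai--Yin for the limit and Tropp's matrix Bernstein inequality for the tail. Your version additionally supplies the decomposition $B=\sum_{u<v}b_{uv}(E_{uv}+E_{vu})$, the variance proxy $q(1-q)(k-1)\le k$ and the bound $R\le 1$, which give $b_q=\tfrac12$ for every $t>0$. One cosmetic slip: your ``sharper'' constant $\tfrac12\min\{1,1/(q(1-q))\}$ always equals $\tfrac12$, since $q(1-q)\le\tfrac14$, so it is not an improvement.
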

\begin{proof}
    The convergence 
\[k^{-\frac{1}{2}}|| B_{\Gamma(k,q)}||\to 2\sqrt{q(1-q)}\]
 follows from Bai–Yin's result \cite{BaiYin} applied to the centered Bernoulli Wigner matrix $B_{\Gamma(k,q)}$. The non-asymptotic tail estimate 
 \[
\bbP(\|B_{\Gamma(k,q)}\|>t)
\le 2k\exp\!\left(-\frac{b_qt^2}{k+t}\right)
\]
 follows from Tropp’s matrix Bernstein inequality \cite{Tro12}; alternatively see Oliveira's result \cite{Oli10} for adjacency-matrix concentration in random graphs with independent edges.
\end{proof}
\begin{corollary}\label{wignertail}
Let $\alpha \in ( \frac{1}{2} , 1) $. For any $0\leq q\leq 1$, there exists a constant $b'_q>0$ such that for all $k \in \mathbb{N}_{\geq 1}$ we have, 
\[
  \mathbb{P}\left( \Vert B_{\Gamma(k,q)}  \Vert   >  k^{\alpha} \right) \leq  k \exp\left( -b_q' k^{2 \alpha - 1}   \right).
  \]
\end{corollary}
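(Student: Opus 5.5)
We substitute $t=k^{\alpha}$ into the non-asymptotic tail bound of Theorem~\ref{Thm=RandomMatrixTailBound} and simplify the exponent. The only issue is that Theorem~\ref{Thm=RandomMatrixTailBound} holds only for $t>T$. We therefore first treat $k$ with $k^{\alpha}>T$, and then handle the finitely many remaining $k$ by shrinking the constant.

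\emph{Step 1 (large $k$).} Let $k_0$ be such that $k^{\alpha}>T$ for all $k\geq k_0$. For such $k$ we apply Theorem~\ref{Thm=RandomMatrixTailBound} with $t=k^{\alpha}$:
\[
\mathbb P\bigl(\|B_{\Gamma(k,q)}\|>k^{\alpha}\bigr)\leq 2k\exp\Bigl(-\frac{b_q k^{2\alpha}}{k+k^{\alpha}}\Bigr).
\]
Since $\alpha<1$, we have $k+k^{\alpha}\leq 2k$. The exponent is therefore at most $-\tfrac{b_q}{2}k^{2\alpha-1}$, which gives a bound of $2k\exp(-\tfrac{b_q}{2}k^{2\alpha-1})$.

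It remains to absorb the prefactor $2$. Because $2\alpha-1>0$, we have $k^{2\alpha-1}\to\infty$. Hence for $k$ large, $\log 2\leq \tfrac{b_q}{4}k^{2\alpha-1}$, and then $2\exp(-\tfrac{b_q}{2}k^{2\alpha-1})\leq \exp(-\tfrac{b_q}{4}k^{2\alpha-1})$. Enlarging $k_0$ if necessary, the claim holds for all $k\geq k_0$ with constant $b_q/4$.

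\emph{Step 2 (small $k$).} For the finitely many $k<k_0$, we use the deterministic bound on $\|B_{\Gamma(k,q)}\|$. Every entry has modulus at most $1$ and the diagonal vanishes, so the Hilbert--Schmidt bound gives $\|B_{\Gamma(k,q)}\|\leq k-1<k$ deterministically.

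For $k=1$ we have $B=0$, so the probability is $0$. For $k\geq 2$, the trivial bound $\mathbb P\leq 1$ is at most $k\exp(-b k^{2\alpha-1})$ as soon as $b\leq \log k/k^{2\alpha-1}$. Set
\[
b''=\min_{2\leq k<k_0}\frac{\log k}{k^{2\alpha-1}}>0,
\]
and take $b'_q=\min(b_q/4,b'')$. This choice works for every $k\geq1$.

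No step is a genuine obstacle. The only care needed is the threshold $t>T$ and the absorption of the constant $2$ into the exponent, both of which cost only a smaller constant $b'_q$ (depending on $\alpha$ as well as $q$).

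Finally, the case $q\in\{0,1\}$ is trivial, since then $B_{\Gamma(k,q)}=0$ almost surely and the probability vanishes.
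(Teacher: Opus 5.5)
Your proof is correct and follows the paper's intended route. The paper states this corollary without proof, as the direct substitution $t=k^{\alpha}$ into the tail bound of Theorem~\ref{Thm=RandomMatrixTailBound}, and you correctly supply the two details it leaves implicit: the restriction $t>T$, handled by shrinking the constant on finitely many $k$, and the absorption of the prefactor $2$ into the exponent, both at the cost of an $\alpha$-dependent constant. The only blemish is cosmetic: the deterministic norm bound in Step 2 is not needed, since only $B=0$ for $k=1$ and $\mathbb P\leq 1$ are used, and the Hilbert--Schmidt estimate gives $\sqrt{k(k-1)}$ rather than $k-1$ (the row-sum bound gives $k-1$).
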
 
Now,when $0\leq q\leq 1$, we denote \[L^{(q)}_k:=L_{\Gamma(k,q)}\quad\textrm{and} \quad D_k^{(q)}:=D_{\Gamma(k,q)};\] 
when $-1\leq q<0$, we denote 
\[L^{(q)}_k:=\widetilde{L}_{\Gamma(k,-q)}\quad \textrm{and}\quad D_k^{(q)}:=\widetilde{D}_{\Gamma(k,-q)}.\]  

Combining \cref{Lem=CliqueTailBound,wignertail}, we are now able to estimate $L_{\Gamma(k,q)}$ with high probability.  
 
\begin{theorem}\label{Thm=MainEstimate}  
For every $-1 \leq  q < 1$ there exists $C_q > 0$ such that for every  $\alpha \in (\frac{1}{2}, 1)$ and  $k \in \mathbb{N}$ we have:
\begin{enumerate}
\item\label{Item=LkTail1} for nonnegative $q$:
\[
\mathbb{P}\left( \Vert L^{(q)}_k \Vert^2 \le  \frac{1}{1-q} + C_q \frac{\log k + k^{\alpha}\log k    }{k} \right) \geq 1-\exp\!\left(-c_q \log^2 k\right) -
k \exp\big(-b'_{|q|} k^{2\alpha-1}\big),
\]
\item \label{Item=LkTail2} for negative $q$:
\[
\mathbb{P}\left( \Vert L^{(q)}_k \Vert^2 \le  1 + C_q \frac{\log k + k^{\alpha}\log k    }{k} \right) \geq 1-\exp\!\left(-c_q \log^2 k\right)-
k \exp\big(-b'_{|q|} k^{2\alpha-1}\big).
\]
\end{enumerate}
\end{theorem}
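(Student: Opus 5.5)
We will start from the identity of Lemma \ref{Lem=LqToeplitz}, applied with $\Gamma=\Gamma(k,|q|)$, and rewrite the two remainder terms so that the identity becomes an inequality for $\|L_k^{(q)}\|^2$. The key point is that $L^*L$ and $LL^*$ have the same norm. Set $x=\|L_k^{(q)}\|^2=\|L^*L\|=\|LL^*\|$.

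For the second remainder we use $D^*D=\sum_v\ell_v\ell_v^*$. Lemma \ref{numberoperator} then gives $\|D_k^{(q)}\|^2=\omega(\Gamma)$, in both the untwisted and the Clifford-twisted case. Hence
\[
\Bigl\|D^*\bigl(\tfrac{B}{k}\otimes I\bigr)D\Bigr\|\le \frac{\|B_\Gamma\|\,\omega(\Gamma)}{k}.
\]
In the twisted case $\widetilde B_\Gamma=-A_\Gamma-q(J-I)=-(A_\Gamma-|q|(J-I))$, which has the same norm as the untwisted $B_{\Gamma(k,|q|)}$. The first remainder has norm at most $|q|\,\omega(\Gamma)/k$, again by Lemma \ref{numberoperator}.

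\textbf{Case $q\ge 0$.} Taking norms in the identity gives
\[
x\le q x+1+\frac{q\,\omega(\Gamma)}{k}+\frac{\|B_\Gamma\|\,\omega(\Gamma)}{k}.
\]
Solving for $x$ yields
\[
x\le \frac{1}{1-q}+\frac{1}{1-q}\cdot\frac{q\,\omega+\|B\|\,\omega}{k}.
\]
A sharper variant keeps $-\frac qk\sum\ell_v\ell_v^*\le 0$ as a positive-operator bound, so that term can even be dropped. Still, estimating it by its norm is harmless.

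\textbf{Case $q<0$.} The term $qLL^*$ is negative, so the bound reads $L^*L\le I+|q|\,\omega/k+\|B\|\,\omega/k$ as operators. This gives
\[
x\le 1+\frac{|q|\,\omega+\|B\|\,\omega}{k}.
\]
This matches the weaker constant $1$ stated in item (\ref{Item=LkTail2}).

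\textbf{Probabilistic step.} For $q\neq0$, fix some $C'>2/\log(1/|q|)$. On the intersection of the events $\omega(\Gamma)<C'\log k$ (Lemma \ref{Lem=CliqueTailBound}) and $\|B_\Gamma\|\le k^\alpha$ (Corollary \ref{wignertail}), we get
\[
x\le \frac{1}{1-q}+C_q\,\frac{\log k+k^\alpha\log k}{k}
\]
with $C_q=C'\max(1,|q|)/(1-q)$, and similarly $x\le 1+C_q\,\frac{\log k+k^\alpha\log k}{k}$ for $q<0$. A union bound on the complements gives the stated probability lower bound.

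The main things to check are small. The constant $C_q$ must not depend on $\alpha$; this holds because it only comes from the clique bound. The value of $k$ must be large enough for Lemma \ref{Lem=CliqueTailBound}; small $k$ can be absorbed by enlarging $C_q$, since the clique number is trivially at most $k$ and $\|B\|\le k$. In the degenerate case $q=0$ the graph is empty, so $\omega=1$ and $B=0$, and the bound is immediate.
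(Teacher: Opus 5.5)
Your proposal is correct and follows the paper's proof essentially step for step: the identity of Lemma~\ref{Lem=LqToeplitz}, the bounds $|q|\,\omega/k$ and $\omega\|B\|/k$ on the two remainders via Lemma~\ref{numberoperator}, absorbing $qx$ when $q\geq0$ and discarding $qLL^*\leq 0$ when $q<0$, and a union bound over the clique event and the event $\|B\|\leq k^\alpha$. Your identification $\|\widetilde B_\Gamma\|=\|B_{\Gamma(k,|q|)}\|$ in the twisted case is correct.

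There are two minor points. First, for $q<0$ your constant $C'/(1-q)=C'/(1+|q|)$ is too small and should simply be $C'$. Second, exactly as in the paper, the clique bound needs $|q|<1$, so the endpoint $q=-1$ is not covered. Nor can it be: there $\Gamma(k,1)$ is complete, $\omega=k$ and $B=0$, and one checks that $\|L^{(-1)}_k\|^2=2$ for $k\geq2$.
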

\begin{proof}
By \cref{numberoperator}, we have for every $-1\leq q\leq 1$, $\|D_k^{(q)}\|^2=\omega(\Gamma(k,|q|))$.
Hence
\[
\left\|(D_k^{(q)})^*\left(
\frac{B_k^{(q)}}{k}\otimes1
\right)D_k^{(q)}
\right\|\le\frac{\omega(\Gamma(k,|q|))}{k}\,
\|B_k^{(q)}\|.
\]
We now estimate $\|L_k^{(q)}\|^2$.

If $0\le q<1$, then by Lemma \ref{Lem=LqToeplitz},
\[
\|L_k^{(q)}\|^2
=\|(L_k^{(q)})^*L_k^{(q)}\|  \le
1+q\|L_k^{(q)}\|^2+
\frac{q}{k}\omega(\Gamma(k,q))+
\frac{\omega(\Gamma(k,q))}{k}\|B_k^{(q)}\|.
\]
Therefore
\[
(1-q)\|L_k^{(q)}\|^2\le 1+
\frac{q}{k}\omega(\Gamma(k,q))+
\frac{\omega(\Gamma(k,q))}{k}\|B_k^{(q)}\|.
\]
So for $C_q = 3q/(1-q)$
\begin{equation}\label{estimateLkq}
    \|L_k^{(q)}\|^2\le
\frac{1}{1-q}+\frac{C_q}{3}\left(
\frac{\omega(\Gamma(k,|q|))}{k}+
\frac{\omega(\Gamma(k,|q|))\|B_k^{(q)}\|}{k}\right).
\end{equation}

If $-1 \leq q<0$, then the term $qL_k^{(q)} (L_k^{(q)})^*$ is negative, so in
operator order it can be discarded for an upper bound. Therefore by Lemma \ref{Lem=LqToeplitz},
\begin{equation}\label{Eqn=estimateLq}
\|L_k^{(q)}\|^2\le1+
\frac{|q|}{k}\omega(\Gamma(k,\vert q \vert))+
\frac{\omega(\Gamma(k, \vert q \vert))}{k}\|B_k^{(q)}\|.
\end{equation}

We shall now derive \eqref{Item=LkTail1} from \eqref{estimateLkq}; the derivation of \eqref{Item=LkTail2} from \eqref{Eqn=estimateLq} follows in exactly the same manner.  
Let
\[
\mathcal G_k:=\left\{
\omega(\Gamma(k,|q|))\le 3 \log_{\frac{1}{|q|}} k\right\}\cap\left\{
\|B_k^{(q)}\|\le k^\alpha\right\}.
\]
On $\mathcal G_k$, by \cref{estimateLkq} we get
\[
\|L_\Gamma\|^2\le\frac{1}{1-q}+C_q\left(\frac{  \log_{\frac{1}{|q|}} k}{k}+
\frac{k^\alpha  \log_{\frac{1}{|q|} } k}{k}\right).
\]
We now bound the probability of $\mathcal G_k$. First, by \cref{Lem=CliqueTailBound}, we have
\[
\mathbb P\bigl(
\omega(\Gamma(k,|q|))> 3 \log_{\frac{1}{|q|}} k\bigr)\le \exp\!\left(-c_q \log^2 k\right). 
\]
Second, by \cref{wignertail},
we get for some constant $b_{\vert q \vert}'>0$,
\[
\mathbb P\bigl(
\|B_\Gamma^{(q)}\|>k^\alpha\bigr)\le
k \exp\bigl(-b'_{|q|} k^{2\alpha-1}\bigr).
\]
By the union bound,
\[
\mathbb P(\mathcal G_k)\ge
1-\exp\!\left(-c_q \log^2 k\right)-
k \exp\bigl(-b'_{|q|} k^{2\alpha-1}\bigr).
\]
This proves the theorem.


\end{proof}

Theorem \ref{Thm=MainKhintchine} below is one of the main theorems of this paper and shows that with high probability we have a Khintchine inequality for sums of semi-circular variables that randomly are either free or commutative. Moreover the inequality is sharp by Theorem \ref{Thm=StrongConvergence} we prove below applied to the 1-variable case with a scalar valued linear polynomial and the fact that a $q$-Gaussian generator $s_1^q$ has norm  $2/\sqrt{1-q}$. 

The deterministic analogue of this problem has been addressed in \cite{CollinsMiyagawa, SantosEtAl}; however a bandwidth between the upper and lower bound remained.

\begin{lemma}\label{lemoperatorconverge}
    Let $-1\leq q<1$ and $T\in\mathcal{B}(\mathcal{H})$.  If $r=\Vert T^*T-I-q TT^*\Vert$,
     then 
    we have \[\Vert T+T^*\Vert\leq \frac{2}{\sqrt{1-q}}\sqrt{1+r}.\]
\end{lemma}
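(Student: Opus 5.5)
The plan is to bound the numerical radius of the self-adjoint operator $T+T^*$. Since $T+T^*$ is self-adjoint,
\[
\|T+T^*\|=\sup_{\|\xi\|=1}|\langle (T+T^*)\xi,\xi\rangle|=\sup_{\|\xi\|=1}2\,|\mathrm{Re}\langle T\xi,\xi\rangle|,
\]
so it suffices to show $|\langle T\xi,\xi\rangle|\leq\sqrt{(1+r)/(1-q)}$ for every unit vector $\xi$.

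The only input is the operator inequality coming from the hypothesis, namely
\[
T^*T\leq (1+r)I+qTT^*,
\]
which holds because $T^*T-I-qTT^*$ is self-adjoint with norm $r$. Fix a unit vector $\xi$ and put $a=\|T\xi\|$ and $b=\|T^*\xi\|$. Pairing the inequality with $\xi$ gives
\[
a^2\leq 1+r+qb^2 .
\]
By Cauchy--Schwarz, $|\langle T\xi,\xi\rangle|\leq a$ and also $|\langle T\xi,\xi\rangle|=|\langle \xi,T^*\xi\rangle|\leq b$. Hence $|\langle T\xi,\xi\rangle|\leq \min(a,b)$. The remaining work splits according to the sign of $q$.

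\emph{Case $0\leq q<1$.} Here the triangle inequality already suffices. Taking the norm of the operator inequality gives $\|T\|^2=\|T^*T\|\leq 1+r+q\|TT^*\|=1+r+q\|T\|^2$. Hence $\|T\|^2\leq (1+r)/(1-q)$, and therefore $\|T+T^*\|\leq 2\|T\|$ gives the claim.

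\emph{Case $-1\leq q<0$.} Here the crude bound $2\|T\|\leq 2\sqrt{1+r}$ is too weak, because $1/\sqrt{1-q}<1$. This is the step I expect to be the real obstacle; it is why the proof uses the quadratic form rather than the norm of $T$. Write $p=-q=|q|$, so the pointwise inequality reads $a^2+pb^2\leq 1+r$. Setting $m=\min(a,b)$, we get
\[
(1+p)m^2\leq a^2+pb^2\leq 1+r ,
\]
so $m\leq\sqrt{(1+r)/(1+p)}=\sqrt{(1+r)/(1-q)}$. Combining this with $|\langle T\xi,\xi\rangle|\leq m$ and the numerical-radius formula gives $\|T+T^*\|\leq 2\sqrt{1+r}/\sqrt{1-q}$.

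As a check on sharpness, the bound is attained at $r=0$ by the $q$-creation operator in both regimes. For $q=-1$ it also correctly gives $\sqrt2\geq 1$ in the Fermionic case.
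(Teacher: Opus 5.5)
Your proof is correct and follows the paper's argument essentially step for step. For $0\le q<1$ you use the same norm estimate $\|T\|^2\le 1+r+q\|T\|^2$. For $-1\le q<0$ you use the same numerical-radius bound via $|\langle T\xi,\xi\rangle|^2\le\min(\|T\xi\|^2,\|T^*\xi\|^2)\le(\|T\xi\|^2+p\|T^*\xi\|^2)/(1+p)$ with $p=-q$.
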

\begin{proof}
Denote $R=T^*T-I-qT T^*$, then $r=\Vert R\Vert$.

    When $0\leq q<1$, we have 
    \begin{align*}
       \Vert T\Vert^2=\Vert T^*T\Vert &\leq \Vert I\Vert +q\Vert TT^*\Vert+\Vert T^*T-I-qTT^*\Vert\\&=1+q\Vert T\Vert^2+\Vert R\Vert .  
    \end{align*}
    This give that
    \[\Vert T\Vert\leq \sqrt{\frac{1+ r}{1-q}}.\]
    Therefore \[\Vert T+T^*\Vert\leq 2\Vert T\Vert \leq \frac{2}{\sqrt{1-q}}\sqrt{1+ r}.\]

    When $-1\le q<0$, let $p=-q$. Take a unit vector $\xi\in\mathcal{H}$, we have 
    \[\Vert T\xi\Vert^2+p\Vert T^*\xi\Vert^2=1+\langle R\xi,\xi\rangle.\]
    But $|\langle T\xi,\xi\rangle|\leq \Vert T\xi\Vert$ and $|\langle T\xi,\xi\rangle|=|\langle \xi,T^*\xi\rangle|\leq \Vert T^* \xi\Vert$, we get 
    \[|\langle T\xi,\xi\rangle|^2\leq \min\{\Vert T \xi\Vert^2,\Vert T^* \xi\Vert^2\}.\]
    On the other hand, we have for $a,b\geq 0$, $\min\{a,b\}\leq \frac{a+pb}{1+p}$, therefore
    \[|\langle T\xi,\xi\rangle|^2\leq \frac{\Vert T \xi\Vert^2+p\Vert T^* \xi\Vert^2}{1+p}=\frac{1+\langle R\xi,\xi\rangle}{1+p}.\]
    But \[|\langle (T+T^*)\xi,\xi\rangle|=2 |\Re\langle T\xi,\xi\rangle|\leq 2|\langle T\xi,\xi\rangle|\leq 2\sqrt{\frac{1+\langle R\xi,\xi\rangle}{1+p}}, \]
    we finally get
    \[\Vert T+T^*\Vert=\sup_{\Vert\xi\Vert=1} |\langle (T+T^*)\xi,\xi\rangle|\le \frac{2}{\sqrt{1+p}}\sqrt{1+r}.\]
\end{proof}
\begin{theorem}[Sharp Erd\H os--R\'enyi graph-product degree-one Khintchine inequality]\label{Thm=MainKhintchine}
Let $-1<q<1$.
Then for every $\alpha\in(\frac12,1)$ there exist constants $C_q,c_q>0$ such that for every $k \in \mathbb{N}$
\[
\bbP\left(\left\|
\frac1{\sqrt{k}}
\sum_{v\in\Gamma(k,|q|)}x^{q,\Gamma_k}_v\right\|
\le\frac{2}{\sqrt{1-q}}+
C_q k^{\alpha-1}\log k\right)
\ge 1- \exp\!\left(-c_q \log^2 k\right)
-2k \exp({-c_qk^{2\alpha-1}}).
\]
Consequently,  for fixed $\varepsilon>0$ , there is constant $c_{q,\varepsilon}>0$ such that for large $k$  
\[
\bbP\left(\left\|
\frac1{\sqrt{k}}
\sum_{v\in\Gamma(k,|q|)}x^{q,\Gamma_k}_v
\right\|\le
\frac{2}{\sqrt{1-q}}+\varepsilon
\right)\ge1-\exp{(-  c_{q, \varepsilon}  \log^2 k)}-2k
\exp\left(-c_{q,\varepsilon}
\frac{k}{(\log k)^2}\right),
\]
where 
\[
x^{q,\Gamma_k}_v=
\begin{cases}
x^{\Gamma_k}_v,& q\ge0,\\
x^{\Gamma_k,\mathrm{Cl}}_v,& q<0.
\end{cases}
\]
More sharp,  but without tail bound, we have in probability
\[\left\|
\frac1{\sqrt{k}}
\sum_{v\in\Gamma(k,|q|)}x^{q,\Gamma_k}_v
\right\|\le
\frac{2}{\sqrt{1-q}}+\mathcal{O}(\frac{\log k}{\sqrt{k}}).\]

\end{theorem}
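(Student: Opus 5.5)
The plan is to write the normalized sum as $T+T^*$ with $T=L_k^{(q)}$ and control the defect in the $q$-Toeplitz relation. Since $x_v^{q,\Gamma_k}=\ell_v+\ell_v^*$ (or the Clifford-twisted analogue),
\[
\frac1{\sqrt k}\sum_{v}x_v^{q,\Gamma_k}=L_k^{(q)}+(L_k^{(q)})^*.
\]
By Lemma \ref{Lem=LqToeplitz}, with $T=L_k^{(q)}$ we have $T^*T-I-qTT^*=R_k$, where
\[
R_k=-\frac qk\sum_v\ell_v\ell_v^*+(D_k^{(q)})^*\Bigl(\frac{B_k^{(q)}}{k}\otimes I\Bigr)D_k^{(q)} .
\]
Lemma \ref{numberoperator} gives $\|D_k^{(q)}\|^2=\omega(\Gamma(k,|q|))$, so
\[
r_k:=\|R_k\|\le\frac{\omega(\Gamma(k,|q|))}{k}\bigl(|q|+\|B_k^{(q)}\|\bigr).
\]
Lemma \ref{lemoperatorconverge} then yields $\|T+T^*\|\le \frac{2}{\sqrt{1-q}}\sqrt{1+r_k}\le\frac{2}{\sqrt{1-q}}(1+r_k/2)$. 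This handles both signs of $q$ at once, and the negative case needs no separate treatment (note that Theorem \ref{Thm=MainEstimate} alone would only give $2$ for $q<0$, so we must use Lemma \ref{lemoperatorconverge}).

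For the tail bound we work on the same good event $\mathcal G_k=\{\omega\le 3\log_{1/|q|}k\}\cap\{\|B_k^{(q)}\|\le k^\alpha\}$ as in Theorem \ref{Thm=MainEstimate}. On $\mathcal G_k$ we get $r_k\le C(\log k+k^\alpha\log k)/k\le C'k^{\alpha-1}\log k$, which gives the first displayed inequality. The probability estimate of $\mathcal G_k^c$ follows from Lemma \ref{Lem=CliqueTailBound} (with $C_q=3/\log(1/|q|)>2/\log(1/|q|)$) and Corollary \ref{wignertail}, using a union bound and adjusting constants (the factor $2k$ absorbs constants). The case $q=0$ is trivial, since then the graph is empty and the sum is free semicircular with norm $2$.

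For the fixed-$\varepsilon$ statement I would not take $\alpha$ fixed but instead choose the threshold $\|B\|\le t_k:=\varepsilon' k/\log k$ with $\varepsilon'$ small. Then $r_k\le C\varepsilon'$ on the good event, and Theorem \ref{Thm=RandomMatrixTailBound} gives $\mathbb P(\|B\|>t_k)\le 2k\exp(-b t_k^2/(k+t_k))\le 2k\exp(-c\,k/\log^2 k)$. For the sharp $O_{\mathbb P}(\log k/\sqrt k)$ statement, use Theorem \ref{Thm=RandomMatrixTailBound} (Bai--Yin): $\|B_k\|=O_{\mathbb P}(\sqrt k)$, and $\omega=O_{\mathbb P}(\log k)$, so $r_k=O_{\mathbb P}(\log k/\sqrt k)$.

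The main obstacle is the bookkeeping of constants and events rather than any conceptual step. The essential points are that the negative-$q$ bound must go through Lemma \ref{lemoperatorconverge}'s convexity trick, and that Corollary \ref{wignertail} must be applied to $B^{(q)}$, which for $q<0$ is $-A-|q|(J-I)=-(A-|q|(J-I))$ and therefore has the same norm law as $B_{\Gamma(k,|q|)}$.
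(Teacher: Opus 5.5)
Your proposal is correct and follows the paper's proof. The paper also writes the sum as $L_k^{(q)}+(L_k^{(q)})^*$, bounds the defect in the $q$-Toeplitz relation by $\frac{\omega(\Gamma(k,|q|))}{k}\bigl(|q|+\|B_k^{(q)}\|\bigr)$, applies Lemma~\ref{lemoperatorconverge} for both signs of $q$, and uses the same good event with Lemma~\ref{Lem=CliqueTailBound}, Corollary~\ref{wignertail} and a union bound. Your $k$-dependent threshold $t_k=\varepsilon' k/\log k$ and your Bai--Yin argument fill in the two consequences that the paper only asserts; this matters for the first of them, since a fixed $\alpha<1$ gives only the exponent $k^{2\alpha-1}$, not $k/(\log k)^2$.
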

\begin{proof}
 By Lemma \ref{Lem=LqToeplitz}
\[
\bigl(L_k^{(q)}\bigr)^*
L_k^{(q)}=I+
qL_k^{(q)}\bigl(L_k^{(q)}\bigr)^*
+E^{(q)}_k,\]
where
\[E^{(q)}_k
=-\frac{q}{k}\sum_v
\ell_v^{(q)}\bigl(\ell_v^{(q)}\bigr)^*+(D^{(q)}_k)^*
\left(
\frac{B^{(q)}_\Gamma}{k}\otimes I
\right)D^{(q)}_k .
\]
Since
\[(D^{(q)}_k)^*D^{(q)}_k=
\sum_v\ell_v^{(q)}
\bigl(\ell_v^{(q)}\bigr)^*,
\]
and 
\[
\|D^{(q)}_k\|^2=
\omega( \Gamma(k, \vert q \vert)),
\]
we have
\[
\|E^{(q)}_k\|\le
\frac{|q|}{k}\omega(\Gamma(k,|q|))+
\frac{\omega(\Gamma(k,|q|))}{k}
\|B_k^{(q)}\|.
\]
Again, define the good event
\[
\mathcal G_k=
\left\{\omega(\Gamma(k,|q|))\le 3 \log_{\frac{1}{|q|}} k
\right\}\cap\left\{
\|B^{(q)}_k\|\le k^\alpha
\right\}.
\]
Then on $\mathcal G_k$,
\[\|E_k\|\le C_q
\frac{\log k+\log(k)k^\alpha}{k}
=\mathcal{O}(k^{\alpha-1}\log k).
\]
Applying \cref{lemoperatorconverge} with $T_k=L_k^{(q)}$ gives
\[\left\|
\frac1{\sqrt{k}}
\sum_{v\in\Gamma(k,|q|)}x^{q,\Gamma_k}_v
\right\|=\|L_k^{(q)}+(L_k^{(q)})^{*}\|
\le\frac{2}{\sqrt{1-q}}
+  \order(k^{\alpha-1}\log k)
\]
on $\mathcal G_k$.
  Finally,
\cref{Lem=CliqueTailBound} yields 
\[
\bbP\left(
\omega(\Gamma(k,|q|))> 3 \log_{\frac{1}{|q|}} k\right)
\le  \exp({-c_q  \log^2 k}),
\]
while \cref{wignertail} gives
\[
\bbP\left(
\|B_k^{(q)}\|>k^\alpha\right)
\le 2k \exp({-c_qk^{2\alpha-1}}).
\]
And then take the union bound to   completes the proof. 
\end{proof}

\subsection{Multivariable approximate $q$-Toeplitz relation}\label{Sect=Multivariate} 

Now we turn to the $r$-variable case.  We recall the notation for graphs in multivariable case in \cref{subsec:random-graphs}. Now  we sample the Erd\H os--R\'enyi graph on $V_k=[k]\times[r]$, and denote this graph as $\Gamma_k$. For $a\in[r]$, let $\Gamma_{k,a}$ on $V_{k,a}=[k]\times \{a\} $ be the subgraph of $\Gamma_k$ inheriting all edges from $\Gamma_k$.
 Let $\mathcal{A}^+_{\Gamma_k}(\Gamma_{k,a})$ be the words in $\mathcal{A}^+_{\Gamma_k}$ that do not start with a letter in $\mathcal{A}^+_{\Gamma_{k,a}}$. For $w \in \mathcal{A}^+_{\Gamma_k}(\Gamma_{k,a})$ the map $\delta_v \mapsto \delta_{uv}$ identifies $\mathcal{F}_{\Gamma_{k,a}}$ isometrically as a subspace of $\mathcal{F}_{\Gamma_k}$. Moreover, this map intertwines the actions of $\ell_v, v \in \Gamma_{k,a}$ and $L_{\Gamma_{k,a}}$. Hence,  
\begin{equation}\label{Eqn=GraphProductHilbertId}
\mathcal{F}_{\Gamma_k} = \bigoplus_{w \in \mathcal{A}^+_{\Gamma_k}(\Gamma_{k,a}) }  \mathcal{F}_{\Gamma_{k,a} w} \simeq \bigoplus_{w \in \mathcal{A}^+_{\Gamma_k}(\Gamma_{k,a}) }  \mathcal{F}_{\Gamma_{k,a}}. 
\end{equation}
It follows in particular that \ref{Lem=LqToeplitz} holds for $L_{\Gamma_{k,a}}$ acting on $\mathcal{F}_{\Gamma_k}$ through \eqref{Eqn=GraphProductHilbertId}.

Define
\[
  L_{k,a}^{q,\Gamma_k}
  =
  \frac1{\sqrt{k}}
  \sum_{v\in \Gamma_{k,a}}
  \ell_{(a,i)}^{q,\Gamma_k},
\qquad
\textrm{and}\qquad
  S_{k,a}^{q,\Gamma_k}
  =
  L_{k,a}^{q,\Gamma_k}
  +(L_{k,a}^{q,\Gamma_k})^*.
\]

We shall now also show that this $q$-Toeplitz type  relation holds for different generators. 
Sometime we moit the superscripts $\Gamma_k$ in the above notations.

For $a,b\in[r]$, let $B^{(q)}_{k,ab}$ be the block of
$B_{\Gamma_k}^{(q)}$ with rows indexed by $\Gamma_{k,a}$ and columns
indexed by $\Gamma_{k,b}$. Define
\[
   D^{(q)}_{k,a}\xi
  =
  \sum_{u\in\Gamma_{k,a}}
  \delta_u\otimes
  (\ell_u^{(q)})^*\xi.
\]
\begin{lemma}[Multivariable approximate $q$-Toeplitz relation]\label{Lem=LqToeplitzDifferent}
Let $-1<q<1$ and $a,b\in[r]$. Then
\begin{equation}\label{Eqn=multiappQToeplitz}
  (L^{(q)}_{k,a})^*L^{(q)}_{k,b}
  = \delta_{ab}1
  +qL^{(q)}_{k,b}(L^{(q)}_{k,a})^*
  +E^{(q)}_{k,ab}.
\end{equation}
Here,
\[
\begin{aligned}
E^{(q)}_{k,ab}
={}
-\frac{q\delta_{ab}}{k}
 \sum_{u\in\Gamma_a}
 \ell_u^{(q)}(\ell_u^{(q)})^*+
( D^{(q)}_{k,b})^*
\left(
 \frac{B^{(q)}_{k,ba}}{k}\otimes1
\right)
 D^{(q)}_{k,a}.
\end{aligned}
\]
Consequently,
\begin{equation}\label{multiqToeperror}
 \max_{a,b\in[r]}\|E^{(q)}_{k,ab}\|
  \leq
  \frac{|q|\,\omega(\Gamma_k)}{k}
  +
  \frac{\omega(\Gamma_k)}{k}
  \|B_{\Gamma_k}^{(q)}\|.   
\end{equation}

In particular, there are events $\mathcal G_k$ with
$\mathbb P_\Gamma(\mathcal G_k)\longrightarrow1$
and deterministic numbers $\delta_k\rightarrow0$ such that
\[
\max_{a,b}\|E^{(q)}_{k,ab}\|\leq\delta_k
  \qquad(\Gamma\in\mathcal{G}_k).
\]
One may take
\[
  \delta_k=\mathcal O_{q,r}\left(\frac{\log k}{\sqrt{k}}\right).
\]
\end{lemma}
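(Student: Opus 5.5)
The plan is to repeat the computation of Lemma \ref{Lem=LqToeplitz} with two blocks of vertices, and then reuse the probabilistic estimates from the proof of Theorem \ref{Thm=MainEstimate}.

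\textbf{Step 1: the algebraic identity.} Expand
\[
(L^{(q)}_{k,a})^*L^{(q)}_{k,b}=\frac1k\sum_{u\in\Gamma_{k,a},\,v\in\Gamma_{k,b}}(\ell^{(q)}_u)^*\ell^{(q)}_v .
\]
Apply the (Clifford-twisted) graph-product relation $(\ell^{(q)}_u)^*\ell^{(q)}_v=\delta_{uv}I+(A^{(q)}_{\Gamma_k})_{uv}\ell^{(q)}_v(\ell^{(q)}_u)^*$. The diagonal terms $u=v$ only occur when $a=b$, and they contribute $\delta_{ab}I$ after the normalisation $1/k$.

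Next write $A^{(q)}_{uv}=q+(B^{(q)})_{uv}$ for $u\neq v$. The terms with coefficient $q$ assemble into $qL_{k,b}(L_{k,a})^*$, minus the missing diagonal $\frac{q\delta_{ab}}{k}\sum_{u}\ell_u\ell_u^*$. The remaining sum $\frac1k\sum_{u,v}(B^{(q)}_{k,ba})_{vu}\,\ell_v\ell_u^*$ is exactly $(D_{k,b})^*(\frac{B_{k,ba}}{k}\otimes 1)D_{k,a}$. This uses $D_{k,b}^*(\delta_v\otimes\eta)=\ell_v\eta$ and the fact that $B^{(q)}$ has zero diagonal.

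\textbf{Step 2: the deterministic bound \eqref{multiqToeperror}.} First, $\|\sum_{u\in\Gamma_{k,a}}\ell_u\ell_u^*\|\le\|\sum_{u\in V_k}\ell_u\ell_u^*\|=\omega(\Gamma_k)$ by Lemma \ref{numberoperator}, since each summand is positive. Second, $\|D_{k,a}\|^2=\|\sum_{u\in\Gamma_{k,a}}\ell_u\ell_u^*\|\le\omega(\Gamma_k)$. Third, a block of a matrix has norm at most the norm of the full matrix, so $\|B^{(q)}_{k,ba}\|\le\|B^{(q)}_{\Gamma_k}\|$. Combining these gives
\[
\|E_{k,ab}\|\le \frac{|q|\,\omega(\Gamma_k)}{k}+\frac{\omega(\Gamma_k)}{k}\,\|D_{k,b}\|\,\|D_{k,a}\|^{-1}\cdots\le \frac{|q|\,\omega(\Gamma_k)}{k}+\frac{\omega(\Gamma_k)}{k}\,\|B^{(q)}_{\Gamma_k}\|,
\]
where the second term uses $\|D_{k,b}\|\,\|D_{k,a}\|\le\omega(\Gamma_k)$.

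\textbf{Step 3: the probabilistic part.} Since $\Gamma_k\sim\Gamma(rk,|q|)$, Lemma \ref{Lem=CliqueTailBound} gives $\omega(\Gamma_k)\le 3\log_{1/|q|}(rk)$ with high probability. For $\|B^{(q)}_{\Gamma_k}\|$, the Bai--Yin limit in Theorem \ref{Thm=RandomMatrixTailBound} gives $\|B\|\le 3\sqrt{q(1-q)}\sqrt{rk}$ with probability tending to one. Taking $\mathcal G_k$ to be the intersection of these two events, on $\mathcal G_k$ we get
\[
\max_{a,b}\|E_{k,ab}\|\le \delta_k:=C_{q,r}\Bigl(\frac{\log k}{k}+\frac{\log k}{\sqrt k}\Bigr)=\mathcal O_{q,r}\Bigl(\frac{\log k}{\sqrt k}\Bigr).
\]

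\textbf{Main obstacle.} The delicate point is getting the $\sqrt k$ rate rather than $k^{\alpha}$. For this I rely on the in-probability Bai--Yin statement rather than Corollary \ref{wignertail}; this gives the required $\mathbb P(\mathcal G_k)\to1$ but no quantitative tail. The cases $q=0$ (where $B\equiv0$) and the sign conventions in the Clifford case $q<0$ need a quick separate check. In the latter case the check is that $A^{(q)}=-A_\Gamma$ and that $B$ is still centered, as in Remark \ref{Rmk=MeanZeroEtc}.
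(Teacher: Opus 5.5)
Your proposal is correct and follows the paper's own proof. It uses the same expansion through the (Clifford-twisted) graph-product relation with $A^{(q)}_{uv}=q+B^{(q)}_{uv}$ off the diagonal, the same bounds $\|D^{(q)}_{k,a}\|^2\le\omega(\Gamma_k)$ and $\|B^{(q)}_{k,ba}\|\le\|B^{(q)}_{\Gamma_k}\|$, and the same clique-number plus Bai--Yin estimates; as you note, Bai--Yin rather than the $k^{\alpha}$ tail bound is what gives the $\log k/\sqrt{k}$ rate. The only blemish is the stray factor $\|D_{k,a}\|^{-1}\cdots$ in your Step~2 display, which should simply read $\|D_{k,b}\|\,\|B^{(q)}_{k,ba}\|\,\|D_{k,a}\|/k$.
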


\begin{proof}
The graph-product and Clifford-twisted graph-product creation operators
satisfy, respectively,
\[
  \ell_u^*\ell_v
  =
  \delta_{uv}1
  +(A_{\Gamma_k})_{uv}\ell_v\ell_u^*
\]
and
\[
  (\widetilde{\ell}_u)^*\ell_v
  =
  \delta_{uv}1
  -(A_{\Gamma_k})_{uv}
\widetilde{\ell}_v(\widetilde{\ell}_u)^*.
\]
Thus, in both cases,
\[
  (\ell_u^{(q)})^*\ell_v^{(q)}
  =
  \delta_{uv}1
  +(A_{\Gamma_k}^{(q)})_{uv}
  \ell_v^{(q)}(\ell_u^{(q)})^*.
\]
Summing over $u\in\Gamma_{k,a}$ and $v\in\Gamma_{k,b}$ gives
\[
\begin{aligned}
(L^{(q)}_{k,a})^*L^{(q)}_{k,b}
={}&
\delta_{ab}1
+\frac1k
\sum_{\substack{u\in\Gamma_{k,a}\\v\in\Gamma_{k,b}}}
(A_\Gamma^{(q)})_{uv}
\ell_v^{(q)}(\ell_u^{(q)})^*.
\end{aligned}
\]
Subtracting
\[
  qL^{(q)}_{k,b}(L^{(q)}_{k,a})^*
  =
  \frac qk
  \sum_{\substack{u\in\Gamma_{k,a}\\v\in\Gamma_{k,b}}}
  \ell_v^{(q)}(\ell_u^{(q)})^*
\]
yields \eqref{Eqn=multiappQToeplitz}. The diagonal contribution occurs only when
$a=b$ and equals
\[
  -\frac qk
  \sum_{u\in\Gamma_{k,a}}
  \ell_u^{(q)}(\ell_u^{(q)})^*.
\]

Moreover,
\[
  \| D^{(q)}_{k,a}\|^2
  =\left\|
   \sum_{u\in\Gamma_{k,a}}
   \ell_u^{(q)}(\ell_u^{(q)})^*
  \right\|
  \leq\omega(\Gamma_k),
\]
and
\[
  \|B^{(q)}_{k,ba}\|
  \leq\|B_{\Gamma_k}^{(q)}\|.
\]
This proves \eqref{multiqToeperror}. The final assertion follows from the clique number
estimate and the random-matrix norm estimate for
$B_{\Gamma_k}^{(q)}$ from Theorem \ref{Lem=CliqueTailBound} and Theorem \ref{Thm=RandomMatrixTailBound} .
\end{proof}

\section{Vacuum Expectations Comparison  Bound}\label{sec:weakconvergence}
In this section we prove Theorem \ref{Thm=ConvergenceMomentsToeplitz} which shows in particular that $q$-Toeplitz generators $\ell_{q,a}$ can be approximated in moments by the models operators $L_{k,a}$. This strengthens a theorem by M\l otkowski  \cite{Mlot}  who showed already that $\ell_{q,a} + \ell_{q,a}^\ast$  can be approximated in moments by $L_{k,a} + L_{k,a}^\ast$. Our estimate is moreover quantitative.

We use the notation of  Section \ref{sec:preliminaries} and let $\Gamma$ be a graph with edge labels $[k] \times [r]$  and $\Gamma_i$ the subgraph with labels $[k] \times \{ a\}$.   
Let 
\[
L_{k,a} = \frac{1}{\sqrt{k}} \sum_{i=1}^k  \ell_{k,a}, \qquad  a=1,\cdots,r,
\]
be the averaged creation operators on the graph-product Fock space $\mathcal{F}_{\Gamma_k}$. 

Let
\[
  \Omega_{\Gamma_k}^{q}
  =
  \begin{cases}
    \Omega_{k},&q\geq0,\\
    \xi_{\mathrm{Cl},k}\otimes\Omega_{\Gamma_k},&q<0,
  \end{cases}
\]
where $\xi_{\mathrm{Cl},k}$ is the canonical tracial GNS vector
of the Clifford algebra. Define the vacuum states on two models:
\[
  \varphi_{q,k}(T)
  =
  \langle
    T\Omega_{\Gamma_k}^{q},
    \Omega_{\Gamma_k}^{q}
  \rangle,
  \qquad
  \varphi_q(T)
  =
  \langle T\Omega_q,\Omega_q\rangle.
\]

In Lemma \ref{Lem=LqToeplitzDifferent}, we proved that
\begin{equation}\label{appqToep}
    L_{k,a}^*L_{k,b}=\delta_{ab}I+qL_{k,b}L_{k,a}^*+E_{k,ab}.
    \end{equation}
Let $\delta_k=\max_{1\leq a,b\leq r}|| E_{k,ab}||$. For $\Gamma_k\sim \Gamma(kr,q)$, we have 
\[\delta_k=\mathcal{O}_{q,r}(\frac{\log k}{\sqrt{k}})\,
\]
in probability. 
On the other hand, we have 
\[\ell_{q,a}^*\ell_{q,b}=\delta_{ab}I+q\ell_{q,b}\ell_{q,a}^*.\]
Since $L_{k,a}^*\Omega_{\Gamma_k}^q=0,\ell_{q,a}^*\Omega_q=0$, we easily get
\[\varphi_{q,k}(L_{k,a_1}\cdots L_{k,a_s})=0,\qquad \varphi_q(\ell_{q,a_1}\cdots\ell_{q,a_s})=0,\]
for any nonempty pure creator word.  The analogous statement holds for a word consisting purely of   annihilators.

Since by  combining Theorem \ref{Thm=MainEstimate} and 
Lemma \ref{Lem=LqToeplitzDifferent},  and \cite[Lemma 4]{BozejkoSpeicher}, 
\[
||L_{k,a}||^2\leq \frac{1}{\sqrt{\min\{1-q,1\}}} +\delta_k,\qquad ||\ell_{q,a}||\leq \frac{1}{\sqrt{\min\{1-q,1\}}},\]
there is  a uniform bound $K_q$ such that for all $k,a$ we have  $||L_{k,a}||\leq K_q$ with probability going to 1 as $k \rightarrow \infty$, and $||\ell_{q,a}||\leq K_q$. 

Let $w$ be a word of letters $\{1,\cdots, r,1^*,\cdots, r^*\}$, and letter $a$ means a creator, $a^*$ means an annihilator. Let  $L_{k,w}$ be the corresponding word of the graph product averaged creators/annihilators, $\ell_{q,w}$ be the corresponding word of the $q$-Fock space creators/annihilators. For instance, if $w=a_1a_2a_3^\ast a_4^\ast$, then $L_{k,w}=L_{k,a_1}L_{k,a_2}L_{k,a_3}^*L_{k,a_4}^*$, $\ell_{q,w}=\ell_{q,a_1}\ell_{q,a_2}\ell_{q,a_3}^*\ell_{q,a_4}^*$. When $w=\varnothing$, we just put $L_{k,\varnothing}=I, \ell_{q,\varnothing}=I$. For these two kinds of words, we prove the following word comparison bound, which tell that the difference between $\varphi_{q,k}(L_{k,w})$ and $\varphi_q(\ell_{q,w})$ will  go to 0 being of   order  $\delta_k$.

\begin{theorem}\label{Thm=ConvergenceMomentsToeplitz}There are events $\mathcal G_k$ with
$\mathbb P_\Gamma(\mathcal G_k)\longrightarrow1$ as $k \rightarrow \infty$, such that  for any word $w$ of $\{1,\cdots,r,1^*,\cdots,r^*\}$, we have
    \[|\varphi_{q,k}(L_{k,w})-\varphi_q(\ell_{q,w})|\leq |w|C_q^{|w|}\delta_k.\]    
    Here $C_q$ is a constant depending only on $q$.
\end{theorem}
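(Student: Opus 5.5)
We argue by induction on the length $|w|$, reducing every word to Wick-ordered form with the approximate $q$-Toeplitz relation \eqref{appqToep}, in parallel with the exact relation for $\ell_{q,a}$.

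\textbf{Good events and base cases.} Take $\mathcal G_k$ to be the intersection of the event of Lemma \ref{Lem=LqToeplitzDifferent} (on which $\max_{a,b}\|E_{k,ab}\|\leq\delta_k$) with the event $\max_a\|L_{k,a}\|\leq K_q$ discussed above. We may enlarge $K_q\geq 1$ so that it also bounds $\|\ell_{q,a}\|$. For $|w|=0$ both vacuum expectations equal $1$. More generally, if $w$ is empty, pure creator, or pure annihilator, both sides vanish (or both equal $1$), since $L_{k,a}^*\Omega^q_{\Gamma_k}=0$ and $\ell_{q,a}^*\Omega_q=0$. The same vanishing holds whenever $w$ begins with a creator (its adjoint then kills the vacuum on the left) or ends with an annihilator. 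Hence the only nontrivial words begin with an annihilator, end with a creator, and contain an adjacent pair $a^*b$ somewhere.

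\textbf{Inductive step.} Let $f(n)=\sup_{|w|\leq n}|\varphi_{q,k}(L_{k,w})-\varphi_q(\ell_{q,w})|$. Choose the first adjacent pair $a^* b$ in $w=w_1\,a^*b\,w_2$. Rewrite
\[
L_{k,w}=\delta_{ab}L_{k,w_1w_2}+q\,L_{k,w_1 b a^* w_2}+L_{k,w_1}E_{k,ab}L_{k,w_2},
\]
and, exactly, $\ell_{q,w}=\delta_{ab}\ell_{q,w_1w_2}+q\,\ell_{q,w_1ba^*w_2}$. The error term contributes at most $K_q^{|w|-2}\delta_k$.

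The difficulty is that the middle word $w_1ba^*w_2$ has the same length $|w|$, so plain induction on length does not close. I would fix this with a secondary induction on the number of inversions, meaning the number of pairs (annihilator, later creator). Each swap $a^*b\to ba^*$ lowers this count by one. Repeatedly moving annihilators to the right gives a swap tree. After at most $\binom{|w|}{2}\leq |w|^2$ swaps every surviving term is either a strictly shorter word (from the $\delta_{ab}$ branch) or a Wick-ordered word, whose two vacuum expectations are both $0$ or both $1$.

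Write $g(n,m)$ for the bound at length $n$ with at most $m$ inversions. Since $|q|<1$, this yields
\[
g(n,m)\leq g(n-2,\cdot)+|q|\,g(n,m-1)+K_q^{n-2}\delta_k,
\]
which unrolls to $f(n)\leq n^2\bigl(f(n-2)+K_q^{n}\delta_k\bigr)$.

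\textbf{Main obstacle: closing with the form $|w|C_q^{|w|}\delta_k$.} The factor $n^2$ per two letters is what threatens this form. A full expansion of the swap tree has exponentially many leaves, but bounded in the form $n^2\cdot n^2\cdots$ it would give $(n!)^{2}$, which is superexponential and so not acceptable.

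The step I expect to be hardest is controlling this count. I would instead avoid the tree entirely by a telescoping (Lindeberg-type) comparison. Both families realise the same normal-ordering algorithm, so I would represent $\varphi(L_{k,w})$ by pulling the leftmost annihilator to the right through the remaining creators. This produces a sum over the contraction position $j$ of $q^{(\#\text{creators crossed})}\,\varphi(\text{word with the pair removed})$ plus error terms. The number of error terms is at most $|w|$, each of norm at most $K_q^{|w|}\delta_k$.

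This is precisely the recursion generating $q^{\mathrm{cr}(\pi)}$ sums. The difference then satisfies
\[
f(n)\leq \Bigl(\sum_j |q|^{j}\Bigr)f(n-2)+nK_q^n\delta_k\leq \frac{1}{1-|q|}f(n-2)+nK_q^n\delta_k .
\]
Iterating gives $f(n)\leq n\,C_q^n\delta_k$ with $C_q=\max\{K_q,(1-|q|)^{-1/2}\}\cdot 2$. I expect the bookkeeping of this recursion, specifically verifying that the error terms are only linear in $|w|$ and that the geometric factor in $|q|$ is uniform, to be the main work.
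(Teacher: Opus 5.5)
Your final recursion matches the paper's, but the reduction step you state for it fails as written. You pull the \emph{leftmost} annihilator to the right ``through the remaining creators''. The letters to its right need not all be creators, and the approximate relation $L_{k,a}^*L_{k,b}=\delta_{ab}1+qL_{k,b}L_{k,a}^*+E_{k,ab}$ gives no way to move $L_{k,a}^*$ past another annihilator $L_{k,b}^*$.

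Since every nontrivial word begins with an annihilator, this already blocks $w=a^*b^*cd$. There $\varphi_q(\ell_{q,w})=\delta_{ad}\delta_{bc}+q\,\delta_{ac}\delta_{bd}$ is nonzero in general.

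The expansion you describe comes from the \emph{rightmost} annihilator, which is the paper's choice: a sum over contraction positions weighted by $q^{i-1}$, at most $|w|$ error terms, and a terminal term vanishing on the vacuum. Write $w=\tilde w\,a^*a_1\cdots a_s$ with $a_1,\dots,a_s$ all creators. Expanding $L_{k,a}^*L_{k,a_1}\cdots L_{k,a_s}$ gives three kinds of terms:
\begin{itemize}
\item contraction terms $q^{i-1}\delta_{aa_i}$ times the product with $L_{k,a_i}$ removed;
\item error terms $q^{i-1}L_{k,a_1}\cdots L_{k,a_{i-1}}E_{k,aa_i}L_{k,a_{i+1}}\cdots L_{k,a_s}$;
\item the term $q^sL_{k,a_1}\cdots L_{k,a_s}L_{k,a}^*$, which kills $\Omega^q_{\Gamma_k}$.
\end{itemize}
This yields $f(n)\le\frac{1}{1-|q|}\bigl(f(n-2)+K_q^{n-2}\delta_k\bigr)$, and your induction with $C_q=2\max\{K_q,(1-|q|)^{-1/2}\}$ then closes. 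Moving the leftmost creator to the left through the preceding annihilators would work equally well.

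Your first scheme was in fact sound; the obstacle you identified comes only from how you unrolled it. In $g(n,m)\le f(n-2)+|q|\,g(n,m-1)+K_q^{n-2}\delta_k$, the same-length branch carries the factor $|q|$. Also $g(n,0)=0$, because Wick-ordered words have the same vacuum expectation ($0$ or $1$) in both models.

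Iterating in $m$ therefore gives the geometric bound $g(n,m)\le\frac{1}{1-|q|}\bigl(f(n-2)+K_q^{n-2}\delta_k\bigr)$, uniformly in $m$. This is exactly the paper's recursion. The factor $n^2$ and the feared $(n!)^2$ growth appear only because you replaced $|q|^i$ by $1$.
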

\begin{proof}
    For $t\in\mathbb{N}$, we define 
    \[D(t):=\sup_{|w|\leq t}|\varphi_{q,k}(L_{k,w})-\varphi_q(\ell_{q,w})|.\]
    Denote $Q_q:=\frac{1}{1-|q|}$. Let $K_q$ be as in the paragraph preceding this theorem and assume we are in the event that $\Vert L_{k,a} \Vert \leq K_q$ which has probability converging to 1 (Lemma \ref{Lem=LqToeplitzDifferent}). 
    Choose a large $C_q$ such that 
    \begin{equation}\label{Eqn=KqQqAssumption}
    K_q\leq C_q,\qquad Q_q\leq {C_q^2}.
    \end{equation}
    In what follows we prove by induction   that $D(t)\leq tC_q^t\delta_k$.

    For $t=0,1$, $D(t)=0$.

    For $|w|=t\geq 2$, we assume $D(t-2)\leq (t-2)C_q^{t-2}\delta_k$ and run 
    the following procedure to reduce the word length:

    If $w$ ends in an annihilator, then since $L_{k,a}^*\Omega_{\Gamma_k}^q=0,\ell_{q,a}^*\Omega_q=0$,  both traces vanish. If $w$ has no annihilator and is nonempty, both traces again vanish. 

    Therefore we can assume $w$ has at least one annihilator and does not end in annihilator. Write $w$ of the form $w=\tilde{w}a^*a_1a_2\cdots a_s$, where $a^*$ is the rightmost annihilator of $w$, and $\tilde{w}$ is the subword of $w$ before $a^*$.

    Then $L_{k,w}=L_{k,\tilde{w}}L_{k,a}^*L_{k,a_1}\cdots L_{k,a_s}$. Now, we move $L_{k,a}^*$ to the right using the approximating $q$-Toeplitz relation \cref{appqToep} iterately, then we obtain
    \begin{align*}
        L_{k,a}^*L_{k,a_1}\cdots L_{k,a_s}=&\sum_{i=1}^s q^{i-1}\delta_{aa_i}L_{k,a_1}\cdots L_{k,a_{i-1}}L_{k,a_{i+1}}\cdots L_{k,a_s}
    \\&+\sum_{i=1}^{s}q^{i-1}L_{k,a_1}\cdots L_{k,a_{i-1}}E_{k,aa_i}L_{k,a_{i+1}}\cdots L_{k,a_s}+q^s L_{k,a_1}\cdots L_{k,a_s}L_{k,a}^*.
    \end{align*}
    But $\varphi_{q,k}(L_{k,\tilde{w}}L_{k,a_1}\cdots L_{k,a_s}L_{k,a}^*)=0$ since $L_{k,a}^*\Omega_{\Gamma_k}^q=0$, therefore
    \begin{align*}
        \varphi_{q,k}(L_{k,w})=&\overbrace{\sum_{i=1}^s q^{i-1}\delta_{aa_i}\varphi_{q,k}(L_{k,\tilde{w}}L_{k,a_1}\cdots L_{k,a_{i-1}}L_{k,a_{i+1}}\cdots L_{k,a_s})}^{\textrm{Contraction Terms}}
        \\&+\overbrace{\sum_{i=1}^sq^{i-1}\varphi_{q,k}(L_{k,\tilde{w}}L_{k,a_1}\cdots L_{k,a_{i-1}}E_{k,aa_i}L_{k,a_{i+1}}\cdots L_{k,a_s})}^{\textrm{Error Terms}}.
    \end{align*}
    On the other hand, by the analogous arugment for the $q$-creation operators 
    \[\varphi_q(\ell_{q,w})=\sum_{i=1}^s q^{i-1}\delta_{aa_i}\tau_q(\ell_{\tilde{w}}\ell_{q,a_1}\cdots \ell_{q,a_{i-1}}\ell_{q,a_{i+1}}\cdots \ell_{q,a_s}).\]

    First, we bound the difference of the contraction terms of two models. For each $i$, the contraction removes two letters  $a_i$  and $a$ from $w$, thus the resulting words have length $t-2$, therefore the difference is bounded by $D(t-2)$. Then the difference between two total contraction terms is bounded by 
    \[\sum_{i=1}^s |q|^{i-1}D(t-2)\leq \frac{1}{1-|q|}D(t-2).\]
    Second, we bound the error terms of the graph product model. In $i$-th error term,  $L_{k,a}^*L_{k,a_i}$  is replaced by $E_{k,aa_i}$, so there are at most $t-2$ ordinary $L$ and $L^*$ factors. Thus
    \begin{align*}
        |\varphi_{q,k}(L_{k,\tilde{w}}L_{k,a_1}\cdots L_{k,a_{i-1}}E_{k,aa_i}L_{k,a_{i+1}}\cdots L_{k,a_i})|&\leq ||L_{k,\tilde{w}}L_{k,a_1}\cdots L_{a_{i-1}}E_{k,aa_i}L_{k,a_{i+1}}\cdots L_{k,a_s}||
        \\&\leq K_q^{t-2}\delta_k.
    \end{align*}
    Therefore the total error is bounded by \[\sum_{i=1}^s|q|^{i-1}K_q^{t-2}\delta_k\leq \frac{K_q^{t-2}}{1-|q|}\delta_k.\]

    Combining these two bounds, we get that
    \begin{equation}\label{recursionineq}
        D(t)\leq \frac{1}{1-|q|}D(t-2)+\frac{K_q^{t-2}}{1-|q|}\delta_k.
    \end{equation}
    Now, we use the induction assumption and the inequality \cref{recursionineq}, we have, by \eqref{Eqn=KqQqAssumption},  
    \[D(t)\leq Q_q(t-2) C_q^{t-2}\delta_k+Q_qK_q^{t-2}\delta_k\leq (t-2)C_q^t\delta_k+C_q^t\delta_k\leq tC_q^t\delta_k,\]
which concludes the proof. 
    \end{proof}

\begin{remark}[A non-backtracking viewpoint]
\label{rem:nonbacktracking}
The reduction in the proof of
Theorem~\ref{Thm=ConvergenceMomentsToeplitz} admits a useful
non-backtracking interpretation. On the graph-product Fock space
$\mathcal F_\Gamma=\ell^2(A_\Gamma^+)$, a creation operator follows a
labelled edge of the Cayley graph of the right-angled Artin monoid,
whereas the corresponding annihilation operator traverses such an
edge backwards. The relation
\[
 (\ell_u^{(q)})^*\ell_v^{(q)}
 =
 \delta_{uv}1+(A_\Gamma^{(q)})_{uv}
 \ell_v^{(q)}(\ell_u^{(q)})^*
\]
either contracts an immediate creation--annihilation reversal or moves
the backward step past a creation operator whenever there is an edge in the graph. The Clifford twist supplies the appropriate sign
when $q<0$.

After averaging, this relation becomes
\[
 L_{k,a}^*L_{k,b}= \delta_{ab}1+qL_{k,b}L_{k,a}^*+E_{k,ab}.
\]
Thus, moving the rightmost annihilator through the creators to its
right either contracts a matched pair, reducing the word length by
two, or moves the annihilator one step further with weight $q$. The
terminal uncontracted term has zero vacuum expectation. Iterating the
procedure produces the usual $q$-weighted Wick contractions
\cite{EffrosPopa}, while $E_{k,ab}$ collects the centered-adjacency and
finite-graph corrections. In this sense, the estimate
$\max_{a,b}\|E_{k,ab}\|\to0$ is a Fock-space analogue of controlling
the remainder in a non-backtracking expansion.

This viewpoint is related to the reduced-word framework underlying
graph-product Khintchine inequalities \cite{CKL20} and to the
matrix-valued and operator-valued non-backtracking methods used in
strong-convergence problems
\cite{BordenaveCollinsNB,BordenaveCollinsRep,FriedmanPuder}.
We emphasize, however, that this is an operator-theoretic analogy:
our proof does not introduce a Hashimoto operator or use an
Ihara--Bass identity.
\end{remark}
\section{Strong convergence for multivariate polynomials}\label{sec:multistrongconvergence}
In this section we prove that the reduced $q$-Toeplitz algebra admits strongly convergent models.
We let $-1 < q < 1$ and consider the $q$-Toeplitz algebra$\mathcal{T}_{q,r}$ with $r$ canonical generators $T_{1}, \ldots, T_{r}$ which subject to the $q$-Toeplitz relation
\[
T_a^\ast T_b = q T_b T_a^\ast + \delta_{ab}. 
\]
 Let again $\ell_{q,1}, \ldots, \ell_{q,r}  \in \mathcal{B}(\mathcal{F}_q(\mathbb C^r))$ denote the $q$-creation operators and let $\mathcal{T}_{q,r}^{\rm red}$ be the reduced $q$-Toeplitz C$^\ast$-algebra  generated by them. The assignment 
\begin{equation}\label{Eqn=Canonical}
    T_a \mapsto \ell_{q,a}
\end{equation}
is a faithful representation $\mathcal{T}_{q,r} \rightarrow \mathcal{T}_{q,r}^{\rm red}$. For the universal $q$-Toeplitz C$^\ast$-algebra $\mathcal{T}_{q,r}^{u}$ generated by $\mathcal{T}_{q,r}$, let 
\[
\Phi_q: \mathcal{T}_{q,r}^{u}  \rightarrow \mathcal{T}_{q,r}^{\rm red},
\]
be the map extending  \eqref{Eqn=Canonical} by  the universal property of $\mathcal{T}_{q,r}^{u}$.

\begin{remark}
    In \cite{PuszWoronowicz} it was proved that  $\Phi_q$ is an isomorphism for $\vert q \vert < \sqrt{2} -1$.  Outside this range this is unknown and we refer to  \cite{DykemaNica, Kuzmin} for related results and further discussion on this. 
\end{remark}

 Let again  $L_{k,a} =  \frac{1}{\sqrt{k}} \sum_{v=1}^k  \ell_{k,a} \in \mathcal{B}(\mathcal{F}_\Gamma)$, see Section \ref{sec:weakconvergence}.    For $d\geq0$, put
\[
  \mathcal E_d^{(q)}
  =
  \mathrm{span}
  \left\{
    1,\,
    s^{(q)}_{i_1}\cdots s^{(q)}_{i_m}
    :
    1\leq m\leq d,\;
    i_1,\ldots,i_m\in[r]
  \right\}
  \subseteq\mathcal T_{q,r},
\]
where $s^{(q)}_a=\ell_{q,a}+\ell_{q,a}^*$.
For a graph $\Gamma$, set
$S^{(q,\Gamma)}_{k,a}
  =
  L^{(q,\Gamma)}_{k,a}
  +(L^{(q,\Gamma)}_{k,a})^*$.

\begin{remark}\label{Rmk=LinIn}
The monomials occurring in the definition of
$\mathcal E_d^{(q)}$ are linearly independent.  This is proved in (the proof of) \cite[Theorem 4.1]{CIW} but we give the short argument for convenience of the reader.   Indeed, if $P$
has degree $m$, then the projection of
\( P(s^{(q)})\Omega_q\)
onto the $m$-particle space is the coefficient tensor of the
homogeneous degree-$m$ part of $P$. The $q$-inner product is
strictly positive for $|q|<1$, so this tensor vanishes only when
all degree-$m$ coefficients vanish. Descending induction to the degree of $P$ proves
the assertion.
\end{remark}

As a consequence of Remark \ref{Rmk=LinIn}, there is a well-defined linear map
$\Theta_{k,\Gamma}^{(d)}
  :
  \mathcal E_d^{(q)}
  \longrightarrow \mathcal{B}(\mathcal F_\Gamma)$
given by
\[
  \Theta_{k,\Gamma}^{(d)}
  \left(
    s^{(q)}_{i_1}\cdots s^{(q)}_{i_m}
  \right)
  =
  S^{(q,\Gamma)}_{k,i_1}
  \cdots
  S^{(q,\Gamma)}_{k,i_m}.
\]
We include the following standard lemma for completeness. 

\begin{lemma}\label{Lem=BtoCB} 
Let $\mathcal{E}$ be a finite dimensional operator space. There exists  $c_{\mathcal{E}} > 0$ such that for every $R: \mathcal{E} \rightarrow \mathcal{B}(\mathcal{H})$ we have  $\Vert R \Vert_{{\rm cb}} \leq c_{\mathcal{E}} \Vert R \Vert$.  
\end{lemma}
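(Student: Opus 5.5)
The plan is to reduce complete boundedness to a finite sum of rank-one maps, using a basis of $\mathcal E$ and its dual functionals. Let $n=\dim\mathcal E$. By Auerbach's lemma, choose a basis $e_1,\ldots,e_n$ of $\mathcal E$ and functionals $f_1,\ldots,f_n\in\mathcal E^*$ with
\[
\|e_i\|=\|f_i\|=1,\qquad f_i(e_j)=\delta_{ij}.
\]
Every $x\in\mathcal E$ then expands as $x=\sum_i f_i(x)e_i$, so every linear $R:\mathcal E\to\mathcal B(\mathcal H)$ decomposes as
\[
R=\sum_{i=1}^n R_i,\qquad R_i(x)=f_i(x)\,R(e_i).
\]

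The key step is to bound the cb-norm of each rank-one piece. I use two standard facts.
\begin{enumerate}
\item A bounded linear functional on an operator space is automatically completely bounded with $\|f\|_{\rm cb}=\|f\|$. This holds because scalar-valued maps have equal norm and cb-norm, for instance via Smith's lemma or Hahn--Banach together with the Wittstock/Arveson extension to a $C^*$-algebra containing $\mathcal E$.
\item For a fixed $T\in\mathcal B(\mathcal H)$, the map $\lambda\mapsto\lambda T$ from $\mathbb C$ to $\mathcal B(\mathcal H)$ has cb-norm $\|T\|$, since $\id_D\otimes(\cdot\, T)$ sends $\alpha\in M_D$ to $\alpha\otimes T$, whose minimal tensor norm is $\|\alpha\|\,\|T\|$.
\end{enumerate}
Writing $R_i$ as the composition of $f_i$ with $\lambda\mapsto\lambda R(e_i)$, these give
\[
\|R_i\|_{\rm cb}\le\|f_i\|_{\rm cb}\,\|R(e_i)\|\le\|R\|\,\|e_i\|=\|R\|.
\]
The triangle inequality for the cb-norm then yields
\[
\|R\|_{\rm cb}\le\sum_{i=1}^n\|R_i\|_{\rm cb}\le n\|R\|,
\]
so $c_{\mathcal E}=\dim\mathcal E$ works. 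It depends only on $\mathcal E$, not on $R$ or $\mathcal H$.

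The only point needing care is step (1), the equality $\|f\|_{\rm cb}=\|f\|$ for functionals. If I want to avoid citing extension theorems, I would argue directly. Take $X=[x_{jk}]\in M_D(\mathcal E)$ and unit vectors $\xi,\eta\in\mathbb C^D$. Then
\[
\bigl|\langle [f(x_{jk})]\xi,\eta\rangle\bigr| = \Bigl|f\Bigl(\sum_{j,k}\bar\eta_j\xi_k x_{jk}\Bigr)\Bigr| \le \|f\|\,\Bigl\|\sum_{j,k}\bar\eta_j\xi_k x_{jk}\Bigr\|.
\]
The element inside the last norm is the compression $(\eta^*\otimes1)X(\xi\otimes1)$, which has norm at most $\|X\|$. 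Hence $\|[f(x_{jk})]\|\le\|f\|\,\|X\|$ for every $D$, which proves step (1).
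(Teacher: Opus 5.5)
Your proof is correct and follows the paper's argument. Both expand in a basis with dual functionals, use $\|f\|_{\rm cb}=\|f\|$ for functionals, and sum the resulting rank-one contributions; the paper obtains $c_{\mathcal E}=\sum_a\|f_a^*\|\,\|e_a\|$, while your use of Auerbach's lemma sharpens this to $c_{\mathcal E}=\dim\mathcal E$, and your compression argument gives a self-contained proof of the functional identity that the paper only asserts.
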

\begin{proof}
Let $e_1, \ldots, e_m$ be a basis of $\mathcal{E}$ and let $f_1^\ast, \ldots, f_m^\ast \in \mathcal{E}^\ast$ be the dual basis, i.e. $f_a^\ast(e_b) = \delta_{ab}$. We have $\Vert f_a^\ast \Vert = \Vert f_a^\ast \Vert_{{\rm cb}}$. Now let $X \in M_D(\mathbb{C})  \otimes \mathcal{E}$ and write $X = \sum_{a =1}^m  X_a \otimes e_a$. As $X_a = ({\rm id} \otimes f_a^\ast)(X)$ we find that 
\[
\Vert ({\rm id} \otimes R)(X) \Vert = \Vert  \sum_{a =1}^m  X_a \otimes R(e_a)  \Vert
\leq \sum_{a=1}^m \Vert X_a \Vert \Vert R \Vert \Vert e_a \Vert \leq \sum_{a=1}^m \Vert f_a^\ast \Vert \Vert X \Vert \Vert R \Vert \Vert e_a \Vert, 
\]
so that $c_{\mathcal{E}} = \sum_{a=1}^m \Vert f_a^\ast \Vert  \Vert e_a \Vert$ does the job. 
\end{proof}

For the upper bound in the following theorem we we were inspired by \cite[Theorem 6]{ShlyakhtenkoStrongC}. We show that in the nuclear case the argument can be changed to yield a completely bounded strong version of the strong convergence result. 

\begin{theorem}[Complete bounded-degree strong convergence]\label{Thm=StrongConvergence}
Let $-1<q<1$ satisfy
\(|q|<\sqrt2-1\),
and fix $r,d\geq1$. There are  events $\mathcal{G}_k$  such that $ \mathbb P_\Gamma(\mathcal{G}_k)\rightarrow1$ 
and
\[
\begin{aligned}
\sup_{\Gamma\in\mathcal{G}_k}
\sup_{D\geq1}
\sup_{0\neq X\in M_D(\mathcal E_d^{(q)})}
\left|
 \frac{
  \|(\id_{M_D}\otimes\Theta_{k,\Gamma}^{(d)})(X)\|
 }{
  \|X\|
 }
 -1
\right|
\rightarrow 0.
\end{aligned}
\]
Equivalently, for every sequence $D_k\geq1$ and every sequence
of nonzero matrix-valued noncommutative polynomials $P_k\in
  M_{D_k}(\mathbb C)
  \otimes
  \mathbb C\langle x_1,\ldots,x_r\rangle$
of degree at most $d$,
\[
  \frac{
   \|P_k(S^{(q,\Gamma_k)}_{k,1},
          \ldots,
          S^{(q,\Gamma_k)}_{k,r})\|
  }{
   \|P_k(s^{(q)}_1,\ldots,s^{(q)}_r)\|
  }
  \rightarrow 1
\]
in probability.
\end{theorem}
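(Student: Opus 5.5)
We argue by contradiction via an ultraproduct, splitting the claim into a completely contractive upper bound and an almost isometric lower bound.

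\textbf{Setup.} Take the good events $\mathcal G_k$ of Lemma~\ref{Lem=LqToeplitzDifferent}, intersected with $\{\|L_{k,a}\|\le K_q\ \forall a\}$. Suppose the statement fails. Then there are $\varepsilon>0$, a subsequence, graphs $\Gamma_k\in\mathcal G_k$, dimensions $D_k$ and normalized $X_k\in M_{D_k}(\mathcal E_d^{(q)})$ whose ratio deviates from $1$ by at least $\varepsilon$. Fix a nonprincipal ultrafilter $\mathcal U$ and put $T_a:=(L^{(q,\Gamma_k)}_{k,a})_{\mathcal U}$ in the ultraproduct $\prod_{k,\mathcal U}\mathcal B(\mathcal F_{\Gamma_k})$; this is well defined since the norms are uniformly bounded by $K_q$. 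Since $\max\|E_{k,ab}\|\le\delta_k\to0$ by Lemma~\ref{Lem=LqToeplitzDifferent}, the $T_a$ satisfy the $q$-Toeplitz relations exactly. The universal property then gives a $*$-homomorphism $\pi:\mathcal T^u_{q,r}\to C^*(T_1,\dots,T_r)$. By Pusz--Woronowicz, for $|q|<\sqrt2-1$ the map $\Phi_q$ is an isomorphism, so $\pi\circ\Phi_q^{-1}$ is a $*$-homomorphism on $\mathcal T^{\rm red}_{q,r}$ sending $s^{(q)}_a\mapsto (S_{k,a})_{\mathcal U}$.

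\textbf{Upper bound.} A $*$-homomorphism is completely contractive, so $\|(\id_D\otimes\pi\Phi_q^{-1})(X)\|\le\|X\|$ for all $D$. The difficulty is that $D_k$ varies, so we cannot pass to the ultraproduct at a fixed matrix level. We handle this with exactness and nuclearity. $\mathcal T^{\rm red}_{q,r}$ is nuclear for $|q|<\sqrt2-1$, being isomorphic to the Cuntz--Toeplitz algebra, so Lemma~\ref{lem:fixed-compression} applies to $E=\mathcal E_d^{(q)}$.

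The cleanest route is to reduce to a fixed finite-dimensional target. Choose a finite-rank cp approximation of the identity on $\mathcal T^{\rm red}_{q,r}$ that is within $\eta$ on $E$ in cb-norm, factoring through some $M_m$. Composing with the homomorphism shows that $\Theta^{(d)}_{k,\Gamma_k}$ is, in the ultralimit, within $\eta\,c_E$ of a cp contraction in cb-norm. This uses Lemma~\ref{Lem=BtoCB} and the fact that $E$ is finite dimensional, so that cb-norm convergence along $\mathcal U$ follows from pointwise convergence on a basis. Concretely, $\Theta_k-\Theta_\infty$ converges to $0$ pointwise on the basis of $E$ along $\mathcal U$, hence by Lemma~\ref{Lem=BtoCB} also in cb-norm, uniformly in $D$. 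Here $\Theta_\infty$ is the ultralimit map, which is completely contractive. This yields $\limsup_{\mathcal U}\sup_D\|\id_D\otimes\Theta_k\|\le1$.

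The same pointwise-to-cb argument is available in reverse provided we can invert on the image. For the lower bound we instead use the compression of Lemma~\ref{lem:fixed-compression}. Given $X\in M_D(E)$, pick contractions $U,V\in M_{m,D}$ with $\|(U\otimes1)X(V^*\otimes1)\|\ge(1-\delta)\|X\|$. Then $Y=(U\otimes1)X(V^*\otimes1)\in M_m(E)$ has fixed size $m$, and
\[
\|(\id\otimes\Theta_k)(X)\|\ge\|(\id_m\otimes\Theta_k)(Y)\|.
\]
It therefore suffices to show $\|(\id_m\otimes\Theta_k)(Y)\|\to\|Y\|$ uniformly over the compact unit ball of $M_m(E)$.

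\textbf{Lower bound at fixed level $m$.} By the ultraproduct, the limit of $\|(\id_m\otimes\Theta_k)(Y)\|$ equals $\|(\id_m\otimes\pi\Phi_q^{-1})(Y)\|$. We then need $\pi\Phi_q^{-1}$ to be injective, hence isometric, on $A_q$ (or at least on $E$). This is the main obstacle. The plan is to restrict the ultraproduct vector state $\psi=(\langle\cdot\,\Omega^q_{\Gamma_k},\Omega^q_{\Gamma_k}\rangle)_{\mathcal U}$, which by Theorem~\ref{Thm=ConvergenceMomentsToeplitz} satisfies $\psi\circ\pi=\varphi_q\circ\Phi_q$ on words. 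The GNS representation of $\psi\circ\pi$ is then the Fock representation. This makes the reduced algebra a subquotient, that is, the Fock representation is contained in $\pi$, so $\pi\Phi_q^{-1}$ is isometric at every matrix level; one can alternatively invoke faithfulness of $\tau_q$ together with simplicity-free arguments. The isometry holds at each fixed level $m$. Combining it with compactness of the unit ball of $M_m(E)$ and Lemma~\ref{Lem=BtoCB} for uniformity gives convergence along $\mathcal U$ uniformly in $Y$, contradicting the choice of $X_k$. The equivalent formulation in terms of the polynomials $P_k$ follows by writing $P_k$ as an element of $M_{D_k}(\mathcal E_d^{(q)})$, which is legitimate by Remark~\ref{Rmk=LinIn}.
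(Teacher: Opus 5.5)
Your overall architecture is the paper's. You take the ultraproduct of the $L_{k,a}$, get exact $q$-Toeplitz relations in the limit, and use Pusz--Woronowicz to obtain $\psi_{\mathcal U}=\pi\circ\Phi_q^{-1}$ on $\mathcal T^{\rm red}_{q,r}$. Injectivity comes from the ultraproduct vacuum state and Theorem~\ref{Thm=ConvergenceMomentsToeplitz}, and Lemma~\ref{lem:fixed-compression} plus compactness reduces the lower bound to a fixed level $m$. The lower-bound half is fine.

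The gap is in the upper bound, exactly at the point you yourself flag as the difficulty. The sentence asserting that $\Theta_k-\Theta_\infty\to0$ on a basis of $E$, hence in cb-norm by Lemma~\ref{Lem=BtoCB}, has no meaning as written. $\Theta_\infty$ takes values in $\prod_{k,\mathcal U}\mathcal A_k$, not in $\mathcal B(\mathcal F_{\Gamma_k})$, so there is no level-$k$ map to subtract. Lemma~\ref{Lem=BtoCB} only controls maps at a fixed level $k$. Complete contractivity of $\Theta_\infty$ gives $\lim_{\mathcal U}\|(\id_D\otimes\Theta_k)(X)\|\le\|X\|$ only for each fixed $D$ and $X$. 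Deducing $\lim_{\mathcal U}\sup_D\|\id_D\otimes\Theta_k\|\le1$ from this is precisely the interchange of $\lim_{\mathcal U}$ and $\sup_D$ that you correctly said is unavailable. The same objection applies to ``composing with the homomorphism'': $\psi_{\mathcal U}\circ\beta\circ\alpha$ is a cp map into the ultraproduct, not a cp contraction at level $k$.

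The missing ingredient is a lift. You need maps $\phi_k:E\to\mathcal A_k$ with $\lim_{\mathcal U}\|\phi_k\|_{\rm cb}\le1$ whose ultraproduct agrees with $\psi_{\mathcal U}$ on $E$, exactly or up to $\eta$. Then $R_k=\Theta_k-\phi_k$ is a genuine map into $\mathcal B(\mathcal F_{\Gamma_k})$ with $\lim_{\mathcal U}\|R_k\|$ small, and Lemma~\ref{Lem=BtoCB} makes $\lim_{\mathcal U}\|R_k\|_{\rm cb}$ small. The bound $\|\id_D\otimes\Theta_k\|\le\|\phi_k\|_{\rm cb}+\|R_k\|_{\rm cb}$ is then uniform in $D$. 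The paper obtains $\phi_k$ by applying the Choi--Effros lifting theorem to $\psi_{\mathcal U}$, which is where nuclearity of $\mathcal T^{\rm red}_{q,r}$ enters.

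Your factorization route can be completed by hand in the same spirit. Take ucp maps $\alpha:\mathcal T^{\rm red}_{q,r}\to M_m$ and $\beta:M_m\to\mathcal T^{\rm red}_{q,r}$ with $\beta\alpha\approx\id$ on a basis of $E$. The Choi matrix of $\psi_{\mathcal U}\circ\beta$ is a positive element of $M_m(\prod_{k,\mathcal U}\mathcal A_k)=\prod_{k,\mathcal U}M_m(\mathcal A_k)$, so it lifts to positive $C_k\in M_m(\mathcal A_k)$. The corresponding cp maps $\gamma_k:M_m\to\mathcal A_k$ satisfy $\|\gamma_k\|_{\rm cb}=\|\gamma_k(1)\|\to1$ along $\mathcal U$, and $\phi_k=\gamma_k\circ\alpha|_E$ does the job with error $O(\eta)$. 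Since $\eta$ is arbitrary, this gives the bound. Without this lifting step the uniform-in-$D$ upper bound is not established.
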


\begin{proof}
Let $\mathcal{G}_k$ be the events from Lemma \ref{Lem=LqToeplitzDifferent}, enlarged if
necessary so that
\[
  \max_{1\leq a\leq r}
  \|L^{(q,\Gamma)}_{k,a}\|
  \leq K_q
  \qquad(\Gamma\in\mathcal{G}_k)
\]
for a constant $K_q$ independent of $k$ and $\Gamma \in \mathcal{G}_k$, and  we have
$\mathbb P_\Gamma(\mathcal{G}_k)\longrightarrow1$.

We first establish fixed-matrix-level convergence. Let
$\Gamma_k\in\mathcal{G}_k$ be an arbitrary deterministic
sequence and let $\mathcal U$ be a nonprincipal ultrafilter.
Put
\[
  \mathcal A_k
  =
  C^*(L^{(q,\Gamma_k)}_{k,1}, 
      \ldots,
      L^{(q,\Gamma_k)}_{k,r}),
\] 
as C$^\ast$-subalgebras of $\ell^\infty(\mathcal{G}_k; B(\mathcal{H}_\Gamma))$. 
 By \eqref{appqToep}   the elements 
\[
  \mathcal{L}_a=(L^{(q,\Gamma_k)}_{k,a})_{k,\mathcal U}
  \in
  \prod_{k,\mathcal U}\mathcal A_k
\]
satisfy the exact $q$-Toeplitz relations
\[
  \mathcal{L}_a^*\mathcal{L}_b=
  \delta_{ab}1+q\mathcal{L}_b\mathcal{L}_a^*.
\]
Therefore the universal property gives a representation of the
universal $q$-Toeplitz algebra. Since
$ \Phi_q: \mathcal{T}_{q,r}^{u} \longrightarrow  \mathcal{T}_{q,r}^{\rm red}$
is an isomorphism for $|q|<\sqrt2-1$ \cite{PuszWoronowicz}, this representation
induces a $*$-homomorphism
\begin{equation}\label{repultra}
    \psi_{\mathcal U}
  :
  \mathcal{T}_{q,r}^{\rm red}
  \longrightarrow
  \prod_{k,\mathcal U}\mathcal A_k,
  \qquad
  \psi_{\mathcal U}(\ell_{q,a})=\mathcal{L}_a.
\end{equation}

Let, for $0 \leq q < 1$,  $\mathcal F_{\mathcal U} = \prod_{k, \mathcal{U}} \mathcal{F}_{\Gamma(k,q)}$ be the corresponding Hilbert-space
ultraproduct and let $\Omega_{\mathcal U}$ be the ultraproduct
vacuum vector. In the Clifford-twisted case, $-1 \leq q < 0$, use the canonical
tracial GNS vector in the Clifford factor to define   $\mathcal F_{\mathcal U}$. Theorem \ref{Thm=ConvergenceMomentsToeplitz} gives,
for every $*$-polynomial $Q$,
\[
\begin{aligned}
\left\langle
 Q(\mathcal{L}_1,\ldots,\mathcal{L}_r,\mathcal{L}_1^*,\ldots,\mathcal{L}_r^*)
 \Omega_{\mathcal U},
 \Omega_{\mathcal U}
\right\rangle
=
\left\langle
 Q(\ell_{q,1},\ldots,\ell_{q,r},
   \ell_{q,1}^*,\ldots,\ell_{q,r}^*)
 \Omega_q,
 \Omega_q
\right\rangle.
\end{aligned}
\]
Hence the cyclic representation generated by
$\Omega_{\mathcal U}$ is unitarily equivalent to the $q$-Fock
representation. The latter is faithful on the reduced algebra
 $\mathcal{T}_{q,r}^{\rm red}$; this easily follows as $\Omega_q$ is cyclic. Therefore $\psi_{\mathcal U}$ is injective,
and hence completely isometric.  

It follows that, for every fixed $m$,
\begin{equation}\label{supsup}
    \begin{aligned}
\sup_{\Gamma\in\mathcal{G}_k}
\sup_{\substack{
 X\in M_m(\mathcal E_d^{(q)})\\
 \|X\|\leq1
}}
\left|
 \|(\id_{M_m}\otimes\Theta_{k,\Gamma}^{(d)})(X)\|
 -\|X\|
\right|
\rightarrow 0.
\end{aligned}
\end{equation}

Indeed, otherwise one could choose a violating sequence
$\Gamma_k$ and, by compactness of the unit ball of
$M_m(\mathcal E_d^{(q)})$, obtain a contradiction to the
complete isometry of (5.4).

We next prove the upper estimate at arbitrary matrix level.
Suppose it fails. Then, after passing to a subsequence, there
are
\[
  \Gamma_k\in\mathcal{G}_k,\qquad
  D_k\geq1,\qquad
  X_k\in M_{D_k}(\mathcal E_d^{(q)})
\]
such that
\(
  \|X_k\|=1
\)
and
\begin{equation}\label{geq1epl}
     \|
   (\id_{M_{D_k}}\otimes
    \Theta_{k,\Gamma_k}^{(d)})(X_k)
  \|
  \geq1+\varepsilon
\end{equation}

for some $\varepsilon>0$.

The reduced $q$-Toeplitz algebra $\mathcal{T}_{q,r}^{\rm red}$ is nuclear:
by \cite[Theorem~1.2]{Kuzmin}, it is isomorphic to the
Cuntz--Toeplitz algebra.   Recall that $\prod_{k, \mathcal{U}} \mathcal A_k$ is by definition the quotient of $\prod_k \mathcal A_k$ by the $c_0$ sequences with respect to $\mathcal{U}$. By the Choi--Effros lifting theorem,
the homomorphism \eqref{repultra} has a completely positive contractive
lift  
\begin{equation}\label{repultra1}
    \widetilde\psi
  =
  (\phi_k)_k
  :
    \mathcal{T}_{q,r}^{\rm red}
  \longrightarrow
  \prod_k\mathcal A_k.
\end{equation}

Set
\[
  R_k
  =
  \Theta_{k,\Gamma_k}^{(d)}
  -
  \phi_k|_{\mathcal E_d^{(q)}}.
\]
Since \eqref{repultra1} lifts \eqref{repultra} and $\mathcal E_d^{(q)}$ is finite dimensional we have  $\Vert R_k \Vert \rightarrow 0$ and by Lemma \ref{Lem=BtoCB} even  $\Vert R_k \Vert_{{\rm cb}} \rightarrow 0$. 
Consequently,
\[
\begin{aligned}
\|
 (\id_{M_{D_k}}\otimes
  \Theta_{k,\Gamma_k}^{(d)})(X_k)
\|
&\leq
\|
 (\id_{M_{D_k}}\otimes\phi_k)(X_k)
\|
+\|R_k\|_{\rm cb}\|X_k\|\\
&\leq
1+ \order_{\mathcal U}(1),
\end{aligned}
\]
contradicting \eqref{geq1epl}.

For the lower estimate, suppose that there are
$\varepsilon>0$ and sequences as above such that
\[
  \|X_k\|=1,
  \qquad
  \|
   (\id_{M_{D_k}}\otimes
    \Theta_{k,\Gamma_k}^{(d)})(X_k)
  \|
  \leq1-\varepsilon.
\]
Fix $\delta\in(0,\varepsilon/3)$. Since
$\mathcal{T}_{q,r}^{\rm red}$ is exact  \cite{KennedyNica}, in fact it  is even nuclear \cite{Kuzmin}, the fixed-family compression lemma
\cite[Lemma~5.13]{MageedelaSalle}, applied to $E=\mathcal E_d^{(q)}$, gives an integer
$m_0=m_0(d,r,q,\delta)$ and contractions
\[
  U_k,V_k\in M_{m_0,D_k}(\mathbb C)
\]
such that
\[
  Y_k
  =
  (U_k\otimes1)X_k(V_k^*\otimes1)
  \in M_{m_0}(\mathcal E_d^{(q)})
\]
satisfies
\[
  \|Y_k\|\geq1-\delta.
\]
On the model side,
\begin{equation}\label{1minepl}
    \begin{aligned}
\|
 (\id_{M_{m_0}}\otimes
  \Theta_{k,\Gamma_k}^{(d)})(Y_k)
\|\leq
\| (\id_{M_{D_k}}\otimes
  \Theta_{k,\Gamma_k}^{(d)})(X_k)
\|\leq1-\varepsilon.
\end{aligned}
\end{equation}

But \eqref{supsup}, applied at the fixed  $m_0$, gives
\[
  \|
   (\id_{M_{m_0}}\otimes
    \Theta_{k,\Gamma_k}^{(d)})(Y_k)
  \|
  =
  \|Y_k\|+o_{\mathcal U}(1)
  \geq1-\delta+ o_{\mathcal U}(1),
\]
contradicting \eqref{1minepl}.
\end{proof}

\begin{corollary}
For every fixed matrix-valued polynomial $P$,
\[
\|P(S^{(q,\Gamma)}_{k,1},\ldots,S^{(q,\Gamma)}_{k,r})\|
\rightarrow
\|P(s^{(q)}_1,\ldots,s^{(q)}_r)\|
\]
in probability.
\end{corollary}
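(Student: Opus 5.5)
The corollary follows from Theorem \ref{Thm=StrongConvergence} by specializing to a single fixed polynomial. Fix $P=\sum_{|w|\leq d}A_w\otimes x_w\in M_D(\mathbb C)\otimes\mathbb C\langle x_1,\ldots,x_r\rangle$ with $D$ and $d$ fixed.

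\emph{Case $P(s^{(q)})=0$.} By Remark \ref{Rmk=LinIn} the monomials $s^{(q)}_{i_1}\cdots s^{(q)}_{i_m}$ are linearly independent. Hence $X:=P(s^{(q)})\in M_D(\mathcal E_d^{(q)})$ vanishes only if all $A_w=0$. In that case both sides are identically zero and there is nothing to prove.

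\emph{Case $X\neq0$.} By construction of $\Theta^{(d)}_{k,\Gamma}$ we have
\[
(\id_{M_D}\otimes\Theta^{(d)}_{k,\Gamma})(X)=P(S^{(q,\Gamma)}_{k,1},\ldots,S^{(q,\Gamma)}_{k,r}).
\]
Theorem \ref{Thm=StrongConvergence} supplies events $\mathcal G_k$ with $\mathbb P_\Gamma(\mathcal G_k)\to1$ and a deterministic sequence $\eta_k\to0$ such that, for every $\Gamma\in\mathcal G_k$,
\[
\bigl|\,\|P(S^{(q,\Gamma)}_{k})\|-\|X\|\,\bigr|\leq\eta_k\|X\|.
\]
Given $\varepsilon>0$, choose $k_0$ with $\eta_k\|X\|<\varepsilon$ for all $k\geq k_0$. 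Then for $k\geq k_0$,
\[
\mathbb P_\Gamma\Bigl(\bigl|\,\|P(S^{(q,\Gamma)}_k)\|-\|P(s^{(q)})\|\,\bigr|>\varepsilon\Bigr)\leq 1-\mathbb P_\Gamma(\mathcal G_k)\longrightarrow0.
\]
This is exactly convergence in probability. The standing hypothesis $|q|<\sqrt2-1$ is inherited from the theorem.

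There is no real obstacle here. The only point to watch is that the theorem is stated as a ratio, so the vanishing case must be handled separately, as done above via linear independence; the ratio statement does not make sense when $P(s^{(q)})=0$.
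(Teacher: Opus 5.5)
Your proposal is correct and is the paper's argument: apply Theorem~\ref{Thm=StrongConvergence} with $d=\deg P$ and the fixed coefficient dimension of $P$. The paper states only that one line. You additionally spell out the routine details, namely the deterministic $\eta_k$ extracted from the uniform supremum over $\mathcal G_k$ and the case $P=0$ disposed of via Remark~\ref{Rmk=LinIn}.
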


\begin{proof}
Apply Theorem~5.1 with
$d=\deg P$ and the fixed coefficient dimension of $P$.
\end{proof}

\begin{remark}
Theorem \ref{Thm=StrongConvergence} holds true for any $-1 <  q < 1$ as long as $\Phi_q$ is an isomorphism.   
\end{remark} 

\section{Strongly convergent matrix models}\label{sec:strongconvergencematrix}

In Section \ref{sec:multistrongconvergence} we have shown that $q$-Toeplitz algebras admit strongly convergent models of random graph products  of creation and annihilation operators with arbitrary matrix amplifications. The latter models are still infinite dimensional. In this section we focus on the subalgebra of $q$-semicircular elements and show that they can be approximated with GUE's yielding finite dimensional models that converge strongly. This is also the place where control of the growth rate of the matrix amplification is essential.

\subsection{Tensor-GUE models}

Tensor-GUE models are random operators on the multipartite space
$(\mathbb C^N)^{\otimes L}$ in which each independent GUE matrix acts
on a prescribed collection of tensor coordinates and as the identity
on the complementary coordinates. The overlap pattern of these
supports determines the limiting mixed independence: disjoint supports
give commuting, classically independent variables, while the full joint
limit is the  graph-product semicircular family. This form of mixed classical and free independence
is discussed in \cite{Mlot, SpeicherWyso}. Weak convergence of the corresponding
tensor-matrix models  was proved in
\cite[Theorem~4]{CharlesworthCollins}. Strong convergence was obtained
for particular tensor configurations in \cite{BelCap,CY26}, and for
every fixed finite interaction pattern in
\cite[Section~9.4, Theorem~9.8]{CGVvH}.

We recall the construction and comparison scheme of
\cite[Section~9.4]{CGVvH}. The fixed-parameter strong-convergence
theorem is not by itself sufficient for our application, because the
matrix-coefficient dimension below grows with $N$. \cref{subsec:uniform-strong-tensor-gue}
establishes the required uniform refinement.

The results of this section can be found in \cite{CGVvH}. 

Fix integers \(L,V\geq 1\) and nonempty subsets
$K_1,\ldots,K_V\subseteq [L]:=\{1,\ldots,L\}$.
For \(N\geq 1\), let
\[
G_v^{(N)}=
H_v^{(N)}\otimes I_{N^{\,L-|K_v|}}
\in M_{N^L}(\mathbb C),
\qquad 1\leq v\leq V,
\]
where \(\bigl(H_v^{(N)}\bigr)_{v=1}^V\) are independent normalized GUE matrices of dimension $N^{|K_v|}$. Set
$G^{(N)}=\bigl(G_1^{(N)},\ldots,G_V^{(N)}\bigr)$.
Its limiting model is the \(\Gamma\)-independent semicircular family
$x^{\Gamma}=(x_1^{\Gamma},\ldots,x_V^{\Gamma})$, where \(\Gamma\) is the graph on \([V]\) defined by
\[
v\sim_\Gamma w
\quad\Longleftrightarrow\quad
K_v\cap K_w=\varnothing.
\] 
For self-adjoint coefficient matrices
 $A_0,\ldots,A_V\in M_D(\mathbb C)_{\mathrm{sa}}$,
define the self-adjoint linear pencil
\[
\mathcal{P}_A(x) =A_0\otimes 1+\sum_{v=1}^V A_v\otimes x_v.
\]
And we denote  For \(T\geq 1\), define
\[
G_v^{(N,T)}=
\frac{1}{\sqrt{T}}\sum_{t=1}^T
\left(\bigotimes_{j\in K_v} G_{v,t,j}^{(N)}\right)
\otimes I_{N^{L-|K_v|}},
\]
where all matrices \(G_{v,t,j}^{(N)}\) are independent normalized \(N\times N\) GUE matrices. Write
\[
G^{(N,T)}=
\bigl(G_1^{(N,T)},\ldots,G_V^{(N,T)}\bigr).
\]
Correspondingly, let
\[
s_v^{(T)}=
\frac{1}{\sqrt{T}}
\sum_{t=1}^T\left(
\bigotimes_{j\in K_v} s_{v,t,j}
\right)\otimes 1,
\]
where the \(s_{v,t,j}\)'s form the Hayes' limiting model, and set
$s^{(T)}=\bigl(s_1^{(T)},\ldots,s_V^{(T)}\bigr)$. 

By \cite[Lemma~9.9]{CGVvH}, $G^{(N,T)}$ and $G^{(N)}$
have the same entrywise mean and covariance (for the real and
imaginary parts of their entries). Moreover,
\cite[Lemma~9.10]{CGVvH} gives, for every fixed coefficient dimension
and every fixed linear pencil $P_A$,
\[
  \|P_A(s^{(T)})\|
  \longrightarrow
  \|P_A(x^\Gamma)\|,
  \qquad T\to\infty.
\]
Below, the quantitative Hayes-model estimate
\cite[Theorem~9.7]{CGVvH} and the universality principle
\cite[Theorem~2.8]{BrailovskayaVanHandel2024} are combined to make this comparison uniform
over growing matrix-coefficient dimensions.

\subsection{Uniform large coefficient dimensions strong convergence of tensor-GUE}\label{subsec:uniform-strong-tensor-gue}

\begin{definition}[Dimension-free limiting CLT modulus]
    For \(M\geq 1\), define 
\[
\Delta_{L,V,M}(T):=
\sup_{\substack{D\geq 1,\;
A_0,\ldots,A_V\in M_D(\mathbb C)_{\mathrm{sa}}\\
\Lambda(A)\leq M
}}
\left|\|\mathcal P_A(s^{(T)})\|-\|\mathcal P_A(x^{\Gamma})\|
\right |,
\]
\end{definition}

The following lemma tells why $\Delta_{L,V,M}(T)$ is called dimension-free: it is precisely what removes the matrix-coefficient dimension
from the limiting \(T\to\infty\) step.
\begin{lemma}\label{asytompzeroCLTmodulus}

For fixed $L,V,M$ and every $T\geq 1$, we have
\[
\Delta_{L,V,M}(T)\leq \frac{(2^L-2)M}{\sqrt T}.
\]
Consequently, $\Delta_{L,V,M}(T)\to 0$ as $T\to\infty$.
\end{lemma}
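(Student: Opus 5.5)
The plan is to realize both $s^{(T)}$ and $x^\Gamma$ on a common Fock-type space, and to show that $s^{(T)}_v$ differs from an exact copy of $x^\Gamma_v$ by at most $(2^{|K_v|}-2)$ remainder operators, each of norm at most $T^{-1/2}$. The $D$-independence of the bound should then come for free: for a pencil, $\|\mathcal P_A(y)-\mathcal P_A(z)\|\le \sum_v \|A_v\|\,\|y_v-z_v\|$ holds at every matrix level.

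\textbf{Step 1 (expansion).} Realize the Hayes model by taking, for each coordinate $j\in[L]$, a free family of semicirculars $s_{v,t,j}=\ell_{v,t,j}+\ell_{v,t,j}^*$ on the full Fock space $\mathcal F_j$. Then tensor over $j$, so that different coordinates are tensor independent. Expanding the product gives
\[
s^{(T)}_v=C_v+C_v^*+R_v,\qquad C_v=\frac1{\sqrt T}\sum_{t=1}^T\Bigl(\bigotimes_{j\in K_v}\ell_{v,t,j}\Bigr)\otimes 1,
\]
where $R_v$ is the sum of the $2^{|K_v|}-2$ mixed terms, each containing at least one creation factor and at least one annihilation factor.

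\textbf{Step 2 (mixed terms are small).} Fix a mixed term and a coordinate $j_0$ carrying a creation factor $\ell_{v,t,j_0}$. Write the term as $T^{-1/2}\sum_t a_t\otimes c_t$, where $a_t=\ell_{v,t,j_0}$ and $c_t$ is the product of the remaining factors. The row/column estimate gives
\[
\Bigl\|\sum_t a_t\otimes c_t\Bigr\|\le\Bigl\|\sum_t a_ta_t^*\Bigr\|^{1/2}\,\Bigl\|\sum_t c_t^*c_t\Bigr\|^{1/2}.
\]
The first factor is at most $1$ because the $\ell_{v,t,j_0}$ are isometries with orthogonal ranges. The roles must be chosen correctly for the second factor: let $c_t$ contain an annihilation factor $\ell^*_{v,t,j_1}$. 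Then $\sum_t c_t^*c_t\le \sum_t \ell_{v,t,j_1}\ell_{v,t,j_1}^*\le 1$, after bounding the other factors by $1$ using that $\ell^*_{v,t,j}\ell_{v,t,j}=1$ and $\ell_{v,t,j}\ell^*_{v,t,j}\le 1$. Hence each mixed term has norm at most $T^{-1/2}$, so $\|R_v\|\le(2^L-2)/\sqrt T$. It follows that $\bigl\|\mathcal P_A(s^{(T)})-\mathcal P_A(C+C^*)\bigr\|\le (2^L-2)M/\sqrt T$ for every $D$.

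\textbf{Step 3 (the main term is exactly $x^\Gamma$).} This is the step I expect to be the main obstacle. I will verify the following relations.
\begin{itemize}
\item If $K_v\cap K_w\neq\varnothing$, then $C_v^*C_w=\delta_{vw}1$, by orthogonality in a shared coordinate.
\item If $K_v\cap K_w=\varnothing$, then $C_v$ and $C_w$ $*$-commute, since they act on disjoint tensor legs.
\end{itemize}
These are exactly the graph-product creation relations for $\Gamma$, and each $C_v$ is an isometry. I will then show that the closed span of words in the $C_v$ applied to the vacuum $\Omega=\bigotimes_j\Omega_j$ is unitarily identified with $\mathcal F_\Gamma$ via $\delta_w\mapsto C_w\Omega$. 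To do this, I check that inner products of such vectors reproduce the right-angled Artin monoid inner product, using the relations and $C_v^*\Omega=0$. On this invariant subspace, $(C_v+C_v^*)_v$ is therefore unitarily equivalent to $(x^\Gamma_v)_v$.

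It remains to pass from this invariant subspace to the whole space. Here I would decompose the whole space into copies of $\mathcal F_\Gamma$, each generated by a vector killed by all the $C_v^*$. This is a Wold-type decomposition for graph-product isometries: the algebra generated by the $C_v$ acts as a multiple of the Fock representation on the wandering part. Failing that, I would argue directly via the vacuum-distribution plus faithful-trace argument applied to $C^*(x^\Gamma)$. Either route gives $\|\mathcal P_A(C+C^*)\|=\|\mathcal P_A(x^\Gamma)\|$ for all $D$.

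Combining Steps 2 and 3 with the triangle inequality yields $\Delta_{L,V,M}(T)\le (2^L-2)M/\sqrt T$, and hence $\Delta_{L,V,M}(T)\to0$ as $T\to\infty$.
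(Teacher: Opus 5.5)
Your argument follows the paper's proof. You use the same expansion of $s_v^{(T)}$ into the all-creation part, its adjoint, and $2^{|K_v|}-2$ mixed terms. You verify the same graph-product creation relations for the main term, and you plan the same Wold-type identification: the paper uses $Q_T=\prod_v(1-C_vC_v^*)$, whose range is $\bigcap_v\ker C_v^*$, and the isometry $U_T(\delta_w\otimes\xi)=C_w\xi$. Your row/column bound for the mixed terms is correct. It is a harmless variant of the paper's direct computation of $r_{v,S}^*r_{v,S}$ as $T^{-1}$ times a sum of pairwise orthogonal projections; both use orthogonality of the creation factors in one coordinate and of the range projections in another.

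The step you leave open in Step 3 is exactly the one that carries the upper bound. Restricting to the reducing subspace generated by the vacuum, or by $\bigcap_v\ker C_v^*$, only gives $\|\mathcal P_A(x^\Gamma)\|\le\|\mathcal P_A(C+C^*)\|$. The two-sided quantity $\Delta_{L,V,M}(T)$ also needs the reverse inequality, and for that you must show that $\overline{\mathrm{span}}\{C_w\xi:\ \xi\in\bigcap_v\ker C_v^*\}$ is all of $\mathcal H_T$. In general a Wold-type decomposition leaves a non-Fock remainder, just as a single isometry has a unitary part.

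Your fallback does not avoid this. The vacuum distribution together with faithfulness of $\tau$ on $C^*(x^\Gamma)$ yields a $*$-homomorphism $C^*(C_v+C_v^*)\to C^*(x^\Gamma)$, which is the same inequality again. The other direction would require the vacuum state to be faithful on $C^*(C_v+C_v^*)$, and you have no way to see this short of the exhaustion statement itself.

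The paper closes the gap by grading $\mathcal H_T$ by total Fock degree.
\begin{enumerate}
\item Each $C_v$ raises degree by $|K_v|\geq1$.
\item The projections $P_v=C_vC_v^*$ commute: $P_uP_v=0$ when $K_u\cap K_v\neq\varnothing$ and $u\neq v$, and double commutation handles the disjoint case. They also preserve the grading.
\item Hence $(1-Q_T)\mathcal H_T^{(n)}\subseteq\sum_v C_v\mathcal H_T^{(n-|K_v|)}$.
\item Induction on $n$ then places every homogeneous component in the range of $U_T$.
\end{enumerate}
With this added, $U_T$ is unitary, $\|\mathcal P_A(C+C^*)\|=\|\mathcal P_A(x^\Gamma)\|$ at every coefficient dimension, and your proof is complete.
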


\begin{proof}
We use the full Fock-space realization of the free semicircular
families appearing in the Hayes' limiting model.  For each
$j\in[L]$, let
\[
\mathcal F_j
   =\mathcal F\bigl(\ell_2([V]\times[T])\bigr),
\]
and denote by $a^{(j)}_{v,t}$, $v\in[V]$, $t\in[T]$
the corresponding left creation operators.  Thus $s_{v,t,j}=a^{(j)}_{v,t}+a^{(j)*}_{v,t}$.
We work on $\mathcal H_T=\bigotimes_{j=1}^L\mathcal F_j$.
For $v\in[V]$, put
\[
W^{(T)}_{v,t}=
   \bigotimes_{j=1}^L b^{(v,t)}_j,
   \qquad
   b^{(v,t)}_j=
   \begin{cases}
   a^{(j)}_{v,t},&j\in K_v,\\
   1,&j\notin K_v,
   \end{cases}
\]
and define
\[
\lambda_v^{(T)} =\frac1{\sqrt T}\sum_{t=1}^T W^{(T)}_{v,t}.
\]
We first identify the joint operator model of
$(\lambda_v^{(T)})_{v=1}^V$.  Since $K_v$ is nonempty, $W^{(T)*}_{v,t}W^{(T)}_{v,u}=\delta_{tu}1$,
and hence $\lambda_v^{(T)*}\lambda_v^{(T)}=1$.
Moreover, if $u\neq v$ and $K_u\cap K_v\neq\varnothing$, then
a tensor coordinate in $K_u\cap K_v$ contains the factor $a^{(j)*}_{u,t}a^{(j)}_{v,r}=0$,
so that $\lambda_u^{(T)*}\lambda_v^{(T)}=0$.
If $K_u\cap K_v=\varnothing$, the two operators act on disjoint
tensor coordinates and therefore doubly commute:
\[
\lambda_u^{(T)}\lambda_v^{(T)}
   =\lambda_v^{(T)}\lambda_u^{(T)},
\qquad
\lambda_u^{(T)*}\lambda_v^{(T)}
   =\lambda_v^{(T)}\lambda_u^{(T)*}.
\]
Thus the family $(\lambda_v^{(T)})_{v=1}^V$ satisfies precisely the
graph-product creation relations associated with  
\[
u\sim_\Gamma v
   \quad\Longleftrightarrow\quad
K_u\cap K_v=\varnothing.
\]
For completeness, we verify that this representation is an
amplication of the graph Fock representation.  Let $P_v=\lambda_v^{(T)}\lambda_v^{(T)*}$.
The preceding relations imply that the projections $P_v$ commute:
if $u\not\sim_\Gamma v$, then $P_uP_v=0$, while if
$u\sim_\Gamma v$, this follows from double commutation.  Set
\[
Q_T=\prod_{v=1}^V(1-P_v),
\qquad
\mathcal K_T=Q_T\mathcal H_T.
\]
For $w\in \mathcal A_\Gamma^+$ denote by $\lambda_w^{(T)}$ the
corresponding product of the $\lambda_v^{(T)}$.  The graph-product
relations imply
\[
Q_T\lambda_w^{(T)*}\lambda_{w'}^{(T)}Q_T
   =\delta_{w,w'}Q_T.
\]
Indeed, using the normal form in the right-angled Artin monoid,
one first cancels the maximal common left divisor of $w$ and
$w'$. If residual letters remain, either a noncommuting pair
gives zero, or the graph relations move a residual creation or
annihilation operator next to $Q_T$.  The latter term vanishes
because
\[
Q_T\lambda_v^{(T)}=0,
\qquad
\lambda_v^{(T)*}Q_T=0.
\]
Therefore the map
\[
U_T:
\ell_2(\mathcal A_\Gamma^+)\otimes\mathcal K_T
   \longrightarrow \mathcal H_T,
\qquad
U_T(\delta_w\otimes\xi)=c_w^{(T)}\xi,
\]
is an isometry.
It is also onto.  Indeed, grade $\mathcal H_T$ by total Fock
degree.  Each $c_v^{(T)}$ raises the total degree by $|K_v|\geq1$,
whereas $P_v$ and $Q_T$ preserve the grading.  If
$\mathcal H_T^{(n)}$ denotes the homogeneous subspace of total
degree $n$, then
\[
(1-Q_T)\mathcal H_T^{(n)}
   \subseteq
   \sum_{v=1}^V P_v\mathcal H_T^{(n)}
   \subseteq
   \sum_{v=1}^V
   c_v^{(T)}\mathcal H_T^{(n-|K_v|)}.
\]
Induction on $n$ shows that every homogeneous subspace is
contained in the range of $U_T$.  Hence $U_T$ is unitary.

 Since we have $\lambda_v^{(T)}
   =
   U_T(\ell_v\otimes1_{\mathcal K_T})U_T^*$.
Consequently, for every coefficient dimension $D$,
\begin{equation}\label{6.1}
  \left\|
 A_0\otimes1+
 \sum_{v=1}^V
 A_v\otimes
 \bigl(\lambda_v^{(T)}+\lambda_v^{(T)*}\bigr)
\right\|=\left\|
 A_0\otimes1+
 \sum_{v=1}^V
 A_v\otimes(\ell_v+\ell_v^*)
\right\|=\|\mathcal P_A(x^{\Gamma})\|.  
\end{equation}

We now compare $s_v^{(T)}$ with
$\lambda_v^{(T)}+\lambda_v^{(T)*}$.  Expanding  
\[
\bigotimes_{j\in K_v}
\bigl(a^{(j)}_{v,t}+a^{(j)*}_{v,t}\bigr),
\]
in the expression 
\[
s_v^{(T)} := T^{-\frac{1}{2}} \sum_{t=1}^T  \bigotimes_{j\in K_v} \bigl(a^{(j)}_{v,t}+a^{(j)*}_{v,t}\bigr),
\]
the all-creation and all-annihilation terms give
$\lambda_v^{(T)}$ and $\lambda_v^{(T)*}$, respectively.  Thus
\[
s_v^{(T)} =
   \lambda_v^{(T)}+\lambda_v^{(T)*}+r_v^{(T)},
\]
where
\[
r_v^{(T)}=
  \sum_{\varnothing\neq S\subsetneq K_v}r_{v,S}^{(T)}
\]
and  
\[
r_{v,S}^{(T)}
 =\frac1{\sqrt T}\sum_{t=1}^T
 \bigotimes_{j=1}^L d^{(v,t,S)}_j,
\]
with
\[
d^{(v,t,S)}_j=
\begin{cases}
a^{(j)}_{v,t},&j\in S,\\
a^{(j)*}_{v,t},&j\in K_v\setminus S,\\
1,&j\notin K_v.
\end{cases}
\]
Fix a nonempty proper subset $S\subsetneq K_v$.  Because
$S\neq\varnothing$, the cross terms with distinct $t$ vanish in
$r_{v,S}^{(T)*}r_{v,S}^{(T)}$.  Hence
\[
r_{v,S}^{(T)*}r_{v,S}^{(T)}
 =\frac1T\sum_{t=1}^T
 \left(\bigotimes_{j\in S}1
 \right)\otimes
 \left(\bigotimes_{j\in K_v\setminus S} a^{(j)}_{v,t}a^{(j)*}_{v,t}
 \right)\otimes1.
\]
As $K_v\setminus S\neq\varnothing$, the summands in the last
display are pairwise orthogonal projections.  It follows that $\|r_{v,S}^{(T)}\|=\frac1{\sqrt T}$. There are $2^{|K_v|}-2$ nonempty proper subsets of $K_v$, so
\begin{equation}\label{6.2}
    \|r_v^{(T)}\|\leq
   \frac{2^{|K_v|}-2}{\sqrt T}
   \leq\frac{2^L-2}{\sqrt T}.
\end{equation}

Combining \eqref{6.1} and \eqref{6.2}, we obtain
\[
\begin{aligned}
\left|\|\mathcal P_A(s^{(T)})\|-\|\mathcal P_A(x^{\Gamma})\|
\right|\leq
\left\|\sum_{v=1}^V A_v\otimes r_v^{(T)}\right\|
&\leq\sum_{v=1}^V
 \|A_v\|\,\|r_v^{(T)}\|\\
&\leq\frac{2^L-2}{\sqrt T}
\sum_{v=1}^V\|A_v\|\leq\frac{(2^L-2)M}{\sqrt T}.
\end{aligned}
\]
The estimate is independent of $D$, and taking the supremum
proves the statement of the lemma.
\end{proof}
\begin{remark} 
    We record a stronger consequence of the preceding construction in Lemma~\ref{asytompzeroCLTmodulus}.     There is a faithful representation
\[
\pi_T:C^*(x^{\Gamma}_1,\ldots,x^{\Gamma}_V)\longrightarrow B(\mathcal H_T)
\]
such that
\[
\max_{1\leq v\leq V}
\|s_v^{(T)}-\pi_T(x^{\Gamma}_v)\|
\leq
\frac{2^L-2}{\sqrt T}. 
\]
Consequently, for every self-adjoint linear pencil with
$\Lambda(A)\leq M$,
\[
\left\|
 \mathcal P_A(s^{(T)})
 -(\id_{M_D}\otimes\pi_T)(\mathcal P_A(x^{\Gamma}))
\right\|
\leq
\frac{(2^L-2)M}{\sqrt T}.
\]
As $\id_{M_D}\otimes\pi_T$ is faithful, spectral stability under
self-adjoint perturbations  gives
\[
d_{\rm H}\!\left(
 \mathrm{sp}(\mathcal P_A(s^{(T)})),
 \mathrm{sp}(\mathcal P_A(x^{\Gamma}))
\right)
\leq
\frac{(2^L-2)M}{\sqrt T}. 
\]
\end{remark}

Let \(C_0\) be the universal constant in \cite[Theorem 9.7]{CGVvH}, and define
\begin{equation}\label{eq:cH}
c_H(L,V,T):=L^{-2}\bigl(C_0LVT\bigr)^{-2L}.
\end{equation}
The exponent \(L\) occurs because a linear pencil in the tensor variables
becomes a polynomial of degree at most \(L\) in the Hayes' variables, while the number of Hayes' variables is \(VT\). More precisely,
\cite[Theorem 9.7]{CGVvH} states that the quantitative constant satisfies
$c^{-1}=q_0^2(CLr)^{2q_0}$,
where \(q_0\leq L\) and \(r=VT\).

\begin{theorem}[Quantitative tensor-GUE transfer]\label{thm:quantitative-tensor-gue-transfer}
Fix \(L,V,M\). There exist constants
\[
C_1=C_1(L,V,M),\qquad
C_2=C_2(L,V),\qquad
c_2=c_2(L,V)>0,
\]
such that the following holds:

Let \(D,N,T\geq 1\), let \(t>0\), and let \(\rho\in(0,1]\). Suppose
\[
T\leq e^N
\]
and further,  with $c_H$ is defined in \eqref{eq:cH}, assume  
\begin{equation}\label{eq:D-bound-cH}
D\leq \exp\bigl(c_H(L,V,T)N\rho^2\bigr).
\end{equation}
Then for every self-adjoint linear pencil \(\mathcal P_A\) satisfying
\(\Lambda(A)\leq M\), we have
\begin{align}
&\mathbb P\Big[
 \|\mathcal P_A(G^{(N)})\| >
 \|\mathcal P_A(x^{\Gamma})\|+\Delta_{L,V,M}(T)+C_2M\rho
 \notag
 +C_1\big(
 N^{-1/2}t^{1/2}
 +T^{-1/12}t^{2/3}
 +T^{-1/4}t\big)
 \Big] \notag\\&\qquad\le
 2DN^Le^{-t}
 +C_1VT e^{-c_2N}
 +\frac{N^L}{c_H(L,V,T)\rho}
   e^{-c_H(L,V,T)N\rho^2}.
\label{eq:quantitative-tensor-gue-bound}
\end{align}
\end{theorem}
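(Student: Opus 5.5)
The estimate splits into three comparisons, each producing one group of terms in \eqref{eq:quantitative-tensor-gue-bound}:
\[
 \|\mathcal P_A(G^{(N)})\|\;\rightsquigarrow\;\|\mathcal P_A(G^{(N,T)})\|\;\rightsquigarrow\;\|\mathcal P_A(s^{(T)})\|\;\rightsquigarrow\;\|\mathcal P_A(x^\Gamma)\|.
\]
\emph{Last step.} This is deterministic and uniform in $D$. By Lemma~\ref{asytompzeroCLTmodulus} and the definition of the modulus, it costs exactly $\Delta_{L,V,M}(T)$. This is where the dimension-freeness of the limiting CLT step is used.

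\emph{First step (universality).} By \cite[Lemma~9.9]{CGVvH}, $G^{(N)}$ and $G^{(N,T)}$ have identical entrywise means and covariances. Hence the self-adjoint random matrices $X_1=\mathcal P_A(G^{(N)})$ and $X_2=\mathcal P_A(G^{(N,T)})$, of size $DN^L$, share their covariance structure. I would apply the universality principle \cite[Theorem~2.8]{BrailovskayaVanHandel2024}, comparing $X_2$ with the Gaussian model with the same covariance, which is $X_1$. This gives, with probability at least $1-2DN^Le^{-t}$, a comparison of spectra with error $C_1(\sigma_*t^{1/2}+R^{1/3}\sigma^{2/3}t^{2/3}+Rt)$.

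The parameters must be estimated in terms of $N,T$ and $\Lambda(A)\le M$.
\begin{itemize}
\item The weak variance satisfies $\sigma_*\lesssim MN^{-1/2}$, since each entry of a normalized GUE matrix has variance of order $N^{-1}$.
\item $\sigma\lesssim M$.
\item The bounded summand parameter $R$ is controlled by $T^{-1/2}$ times norms of the individual tensor products $\bigotimes_j G_{v,t,j}^{(N)}$. These are bounded by a constant with probability $\ge 1-C_1VTe^{-c_2N}$, using GUE norm concentration and a union bound over the $VTL$ factors; this is where $T\le e^N$ is used.
\end{itemize}
One must condition or truncate to make $R$ bounded; this produces the $C_1VTe^{-c_2N}$ term. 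The resulting exponents are $R^{1/3}\sim T^{-1/6}$, giving $T^{-1/6\cdot 1/2}$-type behaviour. I would match these to the stated $T^{-1/12}t^{2/3}$ and $T^{-1/4}t$ by tracking exactly which norm $R$ measures; I expect $R\lesssim T^{-1/4}$ after using $\sigma$ for the middle factor.

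\emph{Middle step (Hayes model).} The entries of $\mathcal P_A(G^{(N,T)})$ are polynomials of degree at most $L$ in the $VT\cdot L$ independent $N\times N$ GUE matrices $G^{(N)}_{v,t,j}$, and $s^{(T)}$ is the same polynomial in free semicirculars. I would invoke the quantitative strong convergence \cite[Theorem~9.7]{CGVvH} with $q_0\le L$ and $r=VT$, whose constant is $c_H(L,V,T)$ as recorded in \eqref{eq:cH}. In its form with matrix coefficients this gives
\[
\|\mathcal P_A(G^{(N,T)})\|\le\|\mathcal P_A(s^{(T)})\|+C_2M\rho
\]
with failure probability $\frac{N^L}{c_H\rho}e^{-c_HN\rho^2}$, valid for $D\le \exp(c_HN\rho^2)$. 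This is hypothesis \eqref{eq:D-bound-cH}. The factor $M$ comes from the normalization $\Lambda(A)\le M$ and the uniform bound $\|\mathcal P_A(s^{(T)})\|\lesssim M$.

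\emph{Conclusion and main obstacle.} A union bound over the three events yields \eqref{eq:quantitative-tensor-gue-bound}. I expect the main obstacle to be the universality step. The cited theorem needs a decomposition of $X_2$ into a sum of independent bounded self-adjoint matrices, and $G^{(N,T)}$ is a sum over $t$ of products of Gaussians rather than a sum of Gaussians. So I would treat each tensor summand, indexed by $v$ and $t$, as one independent random matrix $Z_{v,t}=T^{-1/2}A_v\otimes(\bigotimes_j G_{v,t,j})\otimes I$, truncate it on the high-probability norm event, and check that the truncation changes means and covariances only by $e^{-cN}$. Getting the exact powers of $T$ and $N$ in $R$ is the delicate bookkeeping; everything else follows from the cited results.
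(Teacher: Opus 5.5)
Your proposal is correct and follows the paper's proof step for step. The paper likewise compares $\mathcal P_A(G^{(N)})$ with $\mathcal P_A(G^{(N,T)})$ using \cite[Lemma~9.9]{CGVvH} and Brailovskaya--van Handel universality; it then applies \cite[Theorem~9.7]{CGVvH} and converts $(1+\rho)\|\mathcal P_A(s^{(T)})\|$ into $\|\mathcal P_A(s^{(T)})\|+C_2M\rho$ via $\sup_T\|s_v^{(T)}\|\le C_2$, pays $\Delta_{L,V,M}(T)$ by definition, and closes with a union bound.

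The only difference is bookkeeping in the universality step. The paper does not truncate by hand. It uses the unbounded-summand form of the theorem with $\overline R\lesssim MT^{-1/2}$ and truncation level $R_0=CMT^{-1/4}\geq\overline R^{1/2}\sigma^{1/2}+\sqrt2\,\overline R$, which is exactly your guess and produces the terms $T^{-1/12}t^{2/3}$ and $T^{-1/4}t$. Your hand truncation would actually give the stronger bounded-summand rates $T^{-1/6}t^{2/3}$ and $T^{-1/2}t$, which imply the stated ones. It does cost one extra step you did not spell out: comparing the Gaussian model of the truncated summands with $\mathcal P_A(G^{(N)})$. One way is to split off an independent Gaussian with $e^{-cN}$-small covariance, which is negligible since $\log(DN^L)\lesssim N$.
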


\begin{proof}
Write
\[
X_{A,N,T}
=
\mathcal{P}_A(G^{(N,T)})
=
A_0\otimes1+
\frac1{\sqrt T}
\sum_{v=1}^V\sum_{t=1}^T
A_v\otimes Z_{v,t},
\]
where each $Z_{v,t}$ is a tensor product of at most $L$
independent normalized $N\times N$ GUE matrices, amplified by
identities on the unused tensor coordinates. Its associated
Gaussian matrix with the same mean and covariance has the same
law as $\mathcal P_A(G^{(N)})$.

We estimate the parameters in the universality theorem
uniformly over all coefficient dimensions $D$ and all
coefficient tuples satisfying $\Lambda(A)\leq M$. Repeating the
estimates in the proof of \cite[Theorem~9.8]{CGVvH}, every
occurrence of the coefficient matrices is bounded by $\sum_{v=1}^V\|A_v\|\leq M$.
Consequently, there exist a constant $C_{L,V} > 0 $ such that (in the notation from \cite{BrailovskayaVanHandel2024})
\[
\sigma_*(X_{A,N,T})
\leq C_{L,V}MN^{-1/2},
\qquad
\sigma(X_{A,N,T})
\leq C_{L,V}M,
\]
and  
\[
\overline R(X_{A,N,T})
\leq C_{L,V}MT^{-1/2}.
\]
Moreover, there exist constants $c_{L,V}  > 0
$ such that  for $T\leq e^N$, 
\[
\mathbb P\left[
 \max_{v,t}\|Z_{v,t}\|>C_{L,V}
\right]
\leq
C_{L,V}VT e^{-c_{L,V}N}.
\]
Set $R_0=C_{L,V}M T^{-1/4}$,
where the constant is enlarged if necessary. Since
\[
\overline R(X_{A,N,T})
   \leq C_{L,V}MT^{-1/2}\quad\textrm{and}\quad \sigma(X_{A,N,T})\leq C_{L,V}M,
\]
we have
\[
R_0
\geq
\overline R(X_{A,N,T})^{1/2}
\sigma(X_{A,N,T})^{1/2}
+
\sqrt{2}\,\overline R(X_{A,N,T}).
\]
Moreover, on the event $\max_{v,t}\|Z_{v,t}\|\leq C_{L,V}$,
every independent summand satisfies
\[
\left\|
 \frac1{\sqrt T}A_v\otimes Z_{v,t}
\right\|
\leq
C_{L,V}M T^{-1/2}
\leq R_0.
\]
Consequently, the error term in the unbounded spectrum
universality theorem satisfies for $t >0$
\[
\begin{aligned}
&\sigma_*(X_{A,N,T})t^{1/2}
+
R_0^{1/3}\sigma(X_{A,N,T})^{2/3}t^{2/3}
+
R_0t\\
&\qquad\leq
C_1\left(
N^{-1/2}t^{1/2}
+
T^{-1/12}t^{2/3}
+
T^{-1/4}t
\right),
\end{aligned}
\]
where $C_1=C_1(L,V,M)$.
Therefore the Brailovskaya--van Handel universality comparison (\cite{BrailovskayaVanHandel2024}),
applied as in the proof of \cite[Theorem~9.8]{CGVvH}, gives
\[
\mathrm{sp}(\mathcal P_A(G^{(N)}))
\subseteq
\mathrm{sp}(\mathcal P_A(G^{(N,T)}))+[-u,u]
\]
except on an event of probability at most
\[
2DN^Le^{-t}+C_1VT e^{-c_2N},
\]
where
\[
u=C_1\left(
N^{-1/2}t^{1/2}+
T^{-1/12}t^{2/3}+
T^{-1/4}t\right).
\]
Here the ambient matrix dimension in the universality theorem is
$DN^L$, which accounts for the prefactor $DN^L$.

Next, $\mathcal P_A(G^{(N,T)})$ is a polynomial of degree at most $L$
in a Hayes' model with $VT$ GUE variables. Hence
\cite[Theorem~9.7]{CGVvH} yields
\[
\|\mathcal P_A(G^{(N,T)})\|
\leq
(1+\rho)\|\mathcal P_A(s^{(T)})\|
\]
except on an event of probability at most
\[
\frac{N^L}{c_H(L,V,T)\rho}
e^{-c_H(L,V,T)N\rho^2},
\]
provided
\[
D\leq
\exp\bigl(c_H(L,V,T)N\rho^2\bigr).
\]
Indeed, the constant $c$ in
\cite[Theorem~9.7]{CGVvH} satisfies
$c\geq c_H(L,V,T)$, and therefore
\[
c^{-1}\leq c_H(L,V,T)^{-1},
\qquad
e^{-cN\rho^2}
\leq
e^{-c_H(L,V,T)N\rho^2}.
\]

The limiting approximants satisfy the uniform norm estimate
\[
\sup_{T\geq 1}\max_{1\leq v\leq V}
\|s_v^{(T)}\|\leq C_2,
\]
by the uniform Haagerup inequality used in Appendix~B of \cite{CGVvH}. Therefore,
\[
\|\mathcal P_A(s^{(T)})\|\leq C_2M.
\]
Moreover, by the definition of \(\Delta_{L,V,M}(T)\),
\[
\|\mathcal P_A(s^{(T)})\|\leq
\|\mathcal P_A(x^{\Gamma})\|+\Delta_{L,V,M}(T).
\]
Combining these estimates gives
\begin{align*}
\|\mathcal P_A(G^{(N)})\|
\leq\|\mathcal P_A(G^{(N,T)})\|+u 
&\leq(1+\rho)\|\mathcal P_A(s^{(T)})\|+u \\
&\leq\|\mathcal P_A(x^{\Gamma})\|+\Delta_{L,V,M}(T)
+C_2M\rho+u.
\end{align*}
Taking the union bound over the two exceptional events proves
\eqref{eq:quantitative-tensor-gue-bound}.
\end{proof}
Denote the full tensor-GUE matrix dimension as $n_N=N^L$. Now choose
\begin{equation}\label{eq:large-coefficient-dimension}
D_N=N^{2L}=n_N^2.
\end{equation}
Then $D_N>n_N\geq N$
for $N\geq2$.

Fix \(\varepsilon,\eta\in(0,1)\), and put
\begin{equation}\label{eq:rho-choice}
\rho=\min\left\{1,\frac{\varepsilon}{6C_2M}\right\}.
\end{equation}
Define
\[
t_N=\log\left(\frac{12D_NN^L}{\eta}\right)
=\log\left(\frac{12}{\eta}\right)+3L\log N
\] 
and
\[
T_N=\left\lceil (1+t_N)^9\right\rceil.
\] 
\begin{corollary}
\label{cor:coefficient-dimension-larger-than-N}
For fixed \(L,V,M,\varepsilon,\eta\), there exists \(N_0\) such that,
for every \(N\geq N_0\), every $D\leq N^{2L}$,
and every self-adjoint linear pencil \(\mathcal P_A\) satisfying
\(\Lambda(A)\leq M\), we has
\[
\mathbb P\left[
\|\mathcal P_A(G^{(N)})\|>
\|\mathcal P_A(x^{\Gamma})\|+\varepsilon\right]
\leq \eta.
\]
\end{corollary}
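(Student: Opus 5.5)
The plan is to apply Theorem~\ref{thm:quantitative-tensor-gue-transfer} with the parameters fixed just before the statement: $\rho$ from \eqref{eq:rho-choice}, $t=t_N$, $T=T_N$, and $D\le D_N=N^{2L}$. We then check that, for $N$ large, the deterministic excess is at most $\varepsilon$ and the exceptional probability is at most $\eta$. Both checks are uniform in $D\le N^{2L}$ and in pencils with $\Lambda(A)\le M$, because the constants $C_1,C_2,c_2$ depend only on $(L,V,M)$.

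\emph{Step 1: hypotheses.} Since $t_N=\mathcal O(\log N)$, we have $T_N=\mathcal O((\log N)^9)\le e^N$ for large $N$. The constraint \eqref{eq:D-bound-cH} requires
\[
2L\log N\le c_H(L,V,T_N)N\rho^2,
\qquad\text{where}\quad
c_H(L,V,T_N)=L^{-2}(C_0LVT_N)^{-2L}.
\]
Here $c_H(L,V,T_N)$ decays only polylogarithmically, like $(\log N)^{-18L}$. Hence $c_H N\rho^2$ grows like $N/(\log N)^{18L}$, which dominates $\log N$, so the constraint holds for $N\ge N_0$.

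\emph{Step 2: deterministic error.} By Lemma~\ref{asytompzeroCLTmodulus}, $\Delta_{L,V,M}(T_N)\le (2^L-2)M T_N^{-1/2}\to0$. By the choice of $\rho$, $C_2M\rho\le\varepsilon/6$. For the terms in $u$:
\begin{itemize}
\item $N^{-1/2}t_N^{1/2}\to0$ trivially.
\item $T_N^{-1/12}t_N^{2/3}\le (1+t_N)^{-3/4}(1+t_N)^{2/3}=(1+t_N)^{-1/12}\to0$.
\item $T_N^{-1/4}t_N\le(1+t_N)^{-9/4+1}\to0$.
\end{itemize}
The exponent $9$ in $T_N$ was chosen precisely so that these two powers are negative. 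So for $N\ge N_0$, the total excess $\Delta+C_2M\rho+C_1(\cdots)$ is at most $\varepsilon$.

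\emph{Step 3: probability.} With $D\le D_N$, the first term is $2DN^Le^{-t_N}\le 2D_NN^L\cdot\eta/(12D_NN^L)=\eta/6$. Next, $C_1VT_Ne^{-c_2N}$ is polylog times exponentially small, hence at most $\eta/3$ eventually. The last term is $N^L c_H^{-1}\rho^{-1}e^{-c_HN\rho^2}$. Here $c_H^{-1}$ is polylogarithmic, while $c_HN\rho^2\gtrsim N/(\log N)^{18L}$ beats $L\log N+\log c_H^{-1}$. So this term also tends to zero and is at most $\eta/3$ for large $N$. Summing gives at most $\eta$, and enlarging $N_0$ covers all three conditions.

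The main point requiring care is the interplay in Steps 1 and 3: $c_H$ deteriorates as $T_N$ grows. One must confirm that the polylogarithmic growth of $T_N$ keeps $c_H(L,V,T_N)N$ superlogarithmic, which holds because $c_H$ is polynomial in $T$ with exponent depending only on $L$.
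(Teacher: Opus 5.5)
Your proposal is correct and follows the paper's proof essentially line by line. You instantiate Theorem~\ref{thm:quantitative-tensor-gue-transfer} with $t=t_N$, $T=T_N$ and the chosen $\rho$, check $T_N\le e^N$ and the dimension constraint via $c_H(L,V,T_N)N\rho^2\asymp N/(\log N)^{18L}$, and bound the error terms ($(1+t_N)^{-1/12}$ and $(1+t_N)^{-5/4}$, which are the paper's $\mathcal O(t_N^{-1/12})$ and $\mathcal O(t_N^{-5/4})$) and the three probability terms exactly as the paper does.
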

\begin{proof}
As \(N\to\infty\),
\[
t_N=\mathcal{O}(\log N),\qquad
T_N=\mathcal{O}\bigl((\log N)^9\bigr).
\]
In particular,
\(T_N\leq e^N\)
for every sufficiently large $N$, so condition~(6.2) is
satisfied.
Therefore,
$N^{-1/2}t_N^{1/2}\rightarrow 0$,
while $T_N^{-1/12}t_N^{2/3}=
\mathcal O(t_N^{-1/12})\rightarrow 0$
and $T_N^{-1/4}t_N=
\mathcal O(t_N^{-5/4})\rightarrow 0$.
Moreover,
$\Delta_{L,V,M}(T_N)\rightarrow 0$
by Lemma~\ref{asytompzeroCLTmodulus}.
Recall that
$c_H(L,V,T_N)=
L^{-2}(C_0LVT_N)^{-2L}$.
Since \(T_N=\mathcal O((\log N)^9)\), we have
\[
c_H(L,V,T_N)N\rho^2
\asymp_{L,V,\rho}
\frac{N}{(\log N)^{18L}}.
\]
Consequently,
\[
\frac{\log D_N}{c_H(L,V,T_N)N\rho^2}
\asymp_{L,V,\rho}
\frac{(\log N)^{18L+1}}{N}
\longrightarrow 0.
\]
Thus, for all sufficiently large $N$, we get 
$D_N\leq
\exp\bigl(c_H(L,V,T_N)N\rho^2\bigr)$,
so that condition \eqref{eq:D-bound-cH} holds for every \(D\leq D_N\).
By the definition of \(t_N\), we have $2D_NN^Le^{-t_N}
=\frac{\eta}{6}$.
Moreover, $C_1VT_Ne^{-c_2N}\rightarrow0$.
Also,
\[
c_H(L,V,T_N)^{-1}
   =\mathcal O_{L,V}\big((\log N)^{18L}\big),
\]
and hence
\[
\log\left(
 \frac{N^L}{c_H(L,V,T_N)\rho}
\right)
=L\log N+\mathcal O_{L,V,\rho}(\log\log N).
\]
On the other hand,
\[
c_H(L,V,T_N)N\rho^2
   \asymp_{L,V,\rho}
   \frac{N}{(\log N)^{18L}}.
\]
Consequently,
\[
\frac{N^L}{c_H(L,V,T_N)\rho}
e^{-c_H(L,V,T_N)N\rho^2} \asymp_{L,V,\rho} e^{L \log(N) - \frac{N}{(\log N)^{18L}} + \order_{L,V,\rho}(\log \log N )}
\longrightarrow0.
\]
Finally, by \eqref{eq:rho-choice}, we have $C_2M\rho\leq \frac{\varepsilon}{6}$,
and hence, for sufficiently large \(N\),
\[
\Delta_{L,V,M}(T_N)
+C_2M\rho
+u(N,T_N,t_N)
\leq \varepsilon.
\]
Applying Theorem~\ref{thm:quantitative-tensor-gue-transfer} completes
the proof.
\end{proof}

\begin{remark}
Thus, the quantitative tensor-GUE method  allows $D_N=N^{2L}>N^L$.
More generally, every fixed polynomial growth regime $D_N=N^{aL}$,
$a>1$ is allowed.
\end{remark}

We have   the quantitative upper bound. In the following we prove that the lower bound is
uniform in the growing coefficient dimension by a finite-dimensional
operator-space argument.

\begin{lemma}[Uniform tensor-GUE lower bound]\label{lem:uniform-lower-bound}
Under the hypotheses of \cref{cor:coefficient-dimension-larger-than-N}, for every $\varepsilon>0$,
\[
\lim_{N\to\infty}
\sup_{\substack{D\leq N^{2L}\\
A_0,\ldots,A_V\in M_D(\mathbb C)_{\rm sa}\\
\Lambda(A)\leq M}}
\mathbb P\left[
 \|\mathcal P_A(G^{(N)})\|
 <
 \|\mathcal P_A(x^{\Gamma})\|-\varepsilon
\right]
=0.
\]
\end{lemma}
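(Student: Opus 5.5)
The plan is to reduce the uniform lower bound to a fixed coefficient dimension by exactness compression, exactly as in the lower-bound half of Theorem \ref{Thm=StrongConvergence}, and then to use the ordinary fixed-dimension strong convergence of tensor-GUE models \cite[Theorem~9.8]{CGVvH} together with a compactness argument.

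\textbf{Step 1 (compression).} The $C^*$-algebra $C^*(x_1^\Gamma,\ldots,x_V^\Gamma)$ is exact: it is a graph product of exact (indeed nuclear) algebras, and graph products preserve exactness \cite{CaspersFima}. Apply Lemma \ref{lem:fixed-compression} to the finite-dimensional subspace $E=\mathrm{span}\{1,x_1^\Gamma,\ldots,x_V^\Gamma\}$ with $\delta=\varepsilon/(3\max(M,1))$. This gives $m=m(E,\delta)$ such that for every $D$ and every pencil $X=\mathcal P_A(x^\Gamma)$ there are contractions $U,V\in M_{m,D}$ with
\[
\|(U\otimes1)X(V^*\otimes1)\|\geq(1-\delta)\|X\|.
\]
The compressed object is again a pencil, $\mathcal P_{A'}(x^\Gamma)$ with $A'_v=UA_vV^*\in M_m$. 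It is generally not self-adjoint, but $\Lambda(A')\leq M$. Since $\|X\|\leq C_2M$ is bounded, we get $\|\mathcal P_{A'}(x^\Gamma)\|\geq\|\mathcal P_A(x^\Gamma)\|-\varepsilon/3$ after adjusting $\delta$.

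The same contractions act on the matrix side, since $\mathcal P_{A'}(G^{(N)})=(U\otimes1)\mathcal P_A(G^{(N)})(V^*\otimes1)$. Hence
\[
\|\mathcal P_A(G^{(N)})\|\geq\|\mathcal P_{A'}(G^{(N)})\|.
\]
It therefore suffices to show
\[
\sup_{A'\in M_m^{V+1},\,\Lambda(A')\leq M}\mathbb P\bigl[\|\mathcal P_{A'}(G^{(N)})\|<\|\mathcal P_{A'}(x^\Gamma)\|-2\varepsilon/3\bigr]\longrightarrow0
\]
with $m$ fixed. Non-self-adjointness causes no problem: pass to the self-adjoint dilation $\begin{pmatrix}0&P\\P^*&0\end{pmatrix}$ in dimension $2m$, which has the same norm.

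\textbf{Step 2 (fixed dimension, uniform over a compact set).} For fixed $m$, the set $K=\{A'\in M_{2m}^{V+1}:\Lambda(A')\leq M\}$ is compact. Choose a finite $\varepsilon/(3C)$-net $\{A^{(1)},\ldots,A^{(J)}\}$ of $K$, where $C$ is a constant with $\max_v\|G_v^{(N)}\|\leq C$ with high probability (each $G_v^{(N)}$ is an amplified GUE) and $\max_v\|x_v^\Gamma\|\leq2\leq C$. Both maps $A\mapsto\|\mathcal P_A(G^{(N)})\|$ and $A\mapsto\|\mathcal P_A(x^\Gamma)\|$ are then $C$-Lipschitz in $\Lambda$ on that event.

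For each net point, the fixed-polynomial strong convergence of \cite[Theorem~9.8]{CGVvH} gives $\|\mathcal P_{A^{(j)}}(G^{(N)})\|\to\|\mathcal P_{A^{(j)}}(x^\Gamma)\|$ in probability. A union bound over the $J$ points, combined with the Lipschitz estimate, yields the uniform statement over $K$, and therefore the lemma.

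\textbf{Main obstacle.} The delicate point is Step 1: we must confirm that the exactness needed for Lemma \ref{lem:fixed-compression} holds for the graph-product semicircular $C^*$-algebra, and that $m$ depends only on $E$, hence only on $(L,V,\Gamma)$, and on $\varepsilon,M$. It must not depend on $N$ or $D$. This independence is exactly what makes the supremum over $D\leq N^{2L}$ (indeed over all $D$) harmless for the lower bound. The remaining steps are routine once the reduction to fixed $m$ is in place.
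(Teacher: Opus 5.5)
Your proposal is correct and follows essentially the same route as the paper. The shared steps are: exactness of the graph-product algebra (via \cite{CaspersFima}) gives a fixed compression dimension $m$ independent of $D$; the same contractions compress $\mathcal P_A(G^{(N)})$, so its norm can only drop; and uniform convergence on the compact coefficient ball in $M_m$ follows from a finite net, the Lipschitz bound, and \cite[Theorem~9.8]{CGVvH}. Your Hermitian dilation, which handles the non-self-adjoint compressed coefficients $UA_vV^*$, tidies a point the paper passes over; your net radius needs a minor constant adjustment, but neither affects the argument.
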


\begin{proof}
Put $\mathcal A=C^*(x^{\Gamma}_1,\ldots,x^{\Gamma}_V)$.
Each vertex algebra $C^*(x^{\Gamma}_v)\simeq C([-2,2])$
is nuclear, hence exact.  Since reduced graph products preserve
exactness \cite[Corollary~2.17]{CaspersFima}, the reduced graph-product
algebra $\mathcal A=C^*(x^{\Gamma}_1,\ldots,x^{\Gamma}_V)$
is exact.  Fix $\delta\in(0,1)$.
Applying the fixed-family exactness compression lemma 
\cref{lem:fixed-compression} to $1,x^{\Gamma}_1,\ldots,x^{\Gamma}_V\in\mathcal A$
gives an integer $m=m(L,V,\delta)$
with the following property.  For every $D\geq1$ and every
$A_0,\ldots,A_V\in M_D(\mathbb C)$, there are contractions $U,V\in M_{m,D}(\mathbb C)$
such that, on putting
\[
B_v=UA_vV^*\in M_m(\mathbb C),
\qquad 0\leq v\leq V,
\]
we have
\begin{equation}\label{Eqn=Tag612}
(1-\delta)\|\mathcal P_A(x^{\Gamma})\|
 \leq
 \left\|
 B_0\otimes1+\sum_{v=1}^V B_v\otimes s_v
 \right\|. 
\end{equation}
Here we have used the canonical isometric flip between the two
minimal tensor factors.  Notice also that
\[
\Lambda(B)
=\sum_{v=0}^V\|UA_vV^*\|
\leq\Lambda(A)
\leq M.
\]

For $B=(B_0,\ldots,B_V)$, write
\[
\mathcal Q_B(x)=B_0\otimes1+\sum_{v=1}^V B_v\otimes x_v,
\]
and define
\[
R_N
:=
\sup_{\substack{B_0,\ldots,B_V\in M_m(\mathbb C)\\
\Lambda(B)\leq M}}
\left|
 \|\mathcal Q_B(G^{(N)})\|-\|\mathcal Q_B(x^{\Gamma})\|
\right|.
\]
We claim that
\begin{equation}\label{Eqn=Tag613}
R_N\longrightarrow0
\quad\text{in probability.}
\end{equation}

Indeed, the coefficient set
\[
\mathcal K_m(M)
=
\left\{
(B_0,\ldots,B_V)\in M_m(\mathbb C)^{V+1}:
\Lambda(B)\leq M
\right\}
\]
is compact.  For each fixed $B\in\mathcal K_m(M)$,
\cite[Theorem~9.8]{CGVvH} gives
\[
\|\mathcal Q_B(G^{(N)})\|-\|\mathcal Q_B(x^{\Gamma})\|\longrightarrow0
\]
in probability, in fact almost surely.

Moreover, for $B,C\in\mathcal K_m(M)$,
\[
\begin{aligned}
\big|
 \|\mathcal Q_B(G^{(N)})\|-\|\mathcal Q_C(G^{(N)})\|
\big|
&\leq
\|B_0-C_0\|
 +\sum_{v=1}^V
   \|B_v-C_v\|\,\|G_v^{(N)}\|,\\
\big|
 \|\mathcal Q_B(x^{\Gamma})\|-\|\mathcal Q_C(x^{\Gamma})\|
\big|&\leq
\|B_0-C_0\|
 +\sum_{v=1}^V
   \|B_v-C_v\|\,\|x^{\Gamma}_v\|.
\end{aligned}
\]
By \cite[Theorem~9.8]{CGVvH}, applied to the coordinate
polynomials, the family $\max_{1\leq v\leq V}\|G_v^{(N)}\|$
is bounded in probability.  A finite-net argument on
$\mathcal K_m(M)$ therefore upgrades the pointwise convergence
to the uniform convergence  \eqref{Eqn=Tag613}.

On the random-matrix side,$\mathcal Q_B(G^{(N)})
 =
 (U\otimes1)\mathcal P_A(G^{(N)})(V^*\otimes1)$,
and consequently $\|\mathcal Q_B(G^{(N)})\|
\leq\|\mathcal P_A(G^{(N)})\|$.
Combining this with \eqref{Eqn=Tag612}  gives, uniformly over all
admissible $D$ and $A$,
\begin{equation} \label{Eqn=Tag614} 
\begin{aligned}
\|\mathcal P_A(G^{(N)})\|
\geq \|\mathcal Q_B(G^{(N)})\|
\geq \|\mathcal Q_B(x^{\Gamma})\|-R_N
\geq (1-\delta)\|\mathcal P_A(s)\|-R_N.
\end{aligned}
\end{equation}

Set
$C_s=\max\{1,\|x^{\Gamma}_1\|,\ldots,\|x^{\Gamma}_V\|\}$.
Then $\|\mathcal P_A(x^{\Gamma})\|\leq C_s\Lambda(A)\leq C_sM$.
Given $\varepsilon>0$, choose $0<\delta<\frac{\varepsilon}{2C_sM}$.
It follows from \eqref{Eqn=Tag613} and  \eqref{Eqn=Tag614}   that
\[
\begin{aligned}
\sup_{\substack{D\leq N^{2L}\\\Lambda(A)\leq M}}
\mathbb P\left[
 \|\mathcal P_A(G^{(N)})\| <
 \|\mathcal P_A(x^{\Gamma})\|-\varepsilon
\right]
\leq
\mathbb P\left[R_N>\frac{\varepsilon}{2}\right]
\longrightarrow0.
\end{aligned}
\]
This proves the lemma.
\end{proof}
\begin{lemma}[Finite polynomial spectral detector]\label{lem=finite-polynomial-spectral-detector}
For every $K,\varepsilon>0$, there are finitely many real
polynomials
\[
h_1,\ldots,h_J\in\mathbb R[t]
\]
such that the following holds.  If $X,Y$ are nonempty compact
subsets of $[-K,K]$ and $X\not\subseteq Y+[-\varepsilon,\varepsilon]$,
then, for some $j$,
\[
\sup_{x\in X}|h_j(x)|
>
2\sup_{y\in Y}|h_j(y)|. 
\]
The polynomials may moreover be chosen strictly positive on
$[-K,K]$.
\end{lemma}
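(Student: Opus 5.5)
The plan is to build bump-type polynomials centred at a finite grid of points in $[-K,K]$, which can detect any point of $X$ lying at distance more than $\varepsilon$ from $Y$. Fix a grid $t_1,\ldots,t_J$ in $[-K,K]$ with mesh at most $\varepsilon/4$. For each $j$ I first choose a continuous function $g_j$ on $[-K,K]$ with the following properties:
\begin{itemize}
\item $g_j \geq 0$, and $g_j \equiv 1$ on $[t_j-\varepsilon/4,\,t_j+\varepsilon/4]$;
\item $g_j \equiv 0$ outside $[t_j-\varepsilon/2,\,t_j+\varepsilon/2]$.
\end{itemize}
By Weierstrass I then pick a real polynomial $p_j$ with $\sup_{[-K,K]}|p_j-g_j|\leq 1/10$. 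Finally I set $h_j=p_j+1/5$. This makes $h_j$ strictly positive on $[-K,K]$, with $h_j\geq 1.1$ on the inner interval and $h_j\leq 0.3$ outside the outer interval.

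Now suppose $X\not\subseteq Y+[-\varepsilon,\varepsilon]$. Then there is $x_0\in X$ with $\mathrm{dist}(x_0,Y)>\varepsilon$. Choose $j$ with $|x_0-t_j|\leq \varepsilon/4$ (possible by the mesh condition). Then $\sup_X |h_j| \geq h_j(x_0)\geq 1.1$.

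For any $y\in Y$ we have $|y-t_j|\geq |y-x_0|-|x_0-t_j|>\varepsilon-\varepsilon/4>\varepsilon/2$. So $y$ lies outside the support of $g_j$, and therefore $|h_j(y)|\leq 0.3$. Hence $2\sup_Y|h_j|\leq 0.6<1.1$, which is the required strict inequality.

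Nothing in this argument is delicate. The only step needing care is choosing the constants (the mesh $\varepsilon/4$, the approximation error $1/10$, and the shift $1/5$) so that three things hold simultaneously: positivity of $h_j$, the factor $2$ gap, and the distance estimate $\varepsilon-\varepsilon/4>\varepsilon/2$. The finiteness of the family comes from $J\approx 8K/\varepsilon+1$, and the family depends only on $K$ and $\varepsilon$, not on $X$ or $Y$. I will also note that $X$ and $Y$ being nonempty ensures both suprema are finite and well defined.
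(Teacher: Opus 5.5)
Your proof is correct and follows essentially the same route as the paper: a finite net in $[-K,K]$, continuous bump functions equal to $1$ near each net point and vanishing beyond distance $\varepsilon/2$, Weierstrass approximation, and a positive constant shift to obtain strict positivity. The only differences are the constants (you use an $\varepsilon/4$-net, error $1/10$ and shift $1/5$, where the paper uses an $\varepsilon/8$-net, error $1/64$ and shift $1/8$), and your choices give the required factor-$2$ gap ($1.1 > 0.6$) just as well.
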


\begin{proof}
Choose a finite $\varepsilon/8$-net
$z_1,\ldots,z_J$ in $[-K,K]$.  For each $j$, choose a continuous
function $f_j:[-K,K]\to[0,1]$ such that
\[
f_j(t)=1\quad\text{if }|t-z_j|\leq\varepsilon/8,
\qquad
f_j(t)=0\quad\text{if }|t-z_j|\geq\varepsilon/2.
\]
By the Weierstrass approximation theorem, choose
$h_j\in\mathbb R[t]$ satisfying
\[
\left\|
 h_j-\left(\frac18+f_j\right)
\right\|_{C([-K,K])}
<\frac1{64}.
\]
Then $h_j\geq7/64$ on $[-K,K]$.

If $x\in X$ satisfies $\mathrm{dist}(x,Y)>\varepsilon$,
choose $j$ with $|x-z_j|\leq\varepsilon/8$.  We then have
$h_j(x)\geq\frac{71}{64}$.
For every $y\in Y$,
\[
|y-z_j|
\geq |y-x|-|x-z_j|
>\frac{7\varepsilon}{8},
\]
so $f_j(y)=0$ and
$
|h_j(y)|\leq\frac9{64}$.
This proves (6.15).
\end{proof}
Combining Corollary~\ref{cor:coefficient-dimension-larger-than-N} and
Lemma~\ref{lem:uniform-lower-bound} gives the full norm convergence. Note that \eqref{Eqn=618} must be interpreted as a spectral gap condition.

\begin{theorem}[Growing-coefficient tensor-GUE convergence]\label{thm:tensor-gue-D-larger-than-matrix-dimension}
Fix $L,V,M$.  Then the following statements hold uniformly over
all $D\leq N^{2L}$ and all self-adjoint pencils $\mathcal P_A$ satisfying
$\Lambda(A)\leq M$.

\begin{enumerate}
\item For every $\varepsilon>0$,
\begin{equation}\label{Eqn=Tag616}
\mathbb P\left[\left|
  \|\mathcal P_A(G^{(N)})\|-\|\mathcal P_A(x^{\Gamma})\|
 \right|>\varepsilon
\right]
\longrightarrow0.
\end{equation}

\item For every $\varepsilon>0$,
\begin{equation}\label{Eqn=Tag617}
\mathbb P\left[
 \mathrm{sp}(\mathcal P_A(G^{(N)}))
 \not\subseteq
 \mathrm{sp}(\mathcal P_A(x^{\Gamma}))+[-\varepsilon,\varepsilon]
\right]
\longrightarrow0.
\end{equation}
\end{enumerate}

Consequently, for every $\gamma>0$,
\begin{equation}\label{Eqn=618}
\sup_{\substack{D\leq N^{2L},\,\Lambda(A)\leq M\\
\mathrm{dist}(0,\mathrm{sp}(\mathcal P_A(x^{\Gamma})))\geq\gamma}}
\mathbb P\left[
\mathcal  P_A(G^{(N)})\textrm{ is not invertible, or }
 \|\mathcal P_A(G^{(N)})^{-1}\|>\frac2\gamma
\right]
\longrightarrow0. 
\end{equation}
\end{theorem}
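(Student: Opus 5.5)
Part (1) follows by combining the two one-sided bounds already in hand. \cref{cor:coefficient-dimension-larger-than-N} gives the upper estimate: for fixed $\varepsilon,\eta$ and all $N\geq N_0$, uniformly over $D\leq N^{2L}$ and $\Lambda(A)\leq M$, we have $\mathbb P[\|\mathcal P_A(G^{(N)})\|>\|\mathcal P_A(x^\Gamma)\|+\varepsilon]\leq\eta$. Since $\eta$ is arbitrary, the supremum of these probabilities tends to zero. \cref{lem:uniform-lower-bound} gives the matching uniform lower estimate. A union bound then yields \eqref{Eqn=Tag616}.

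For part (2) I would use the finite polynomial spectral detector, applied to polynomials in the pencil. Set $K=C_sM+1$, where $C_s$ bounds the norms of the relevant variables. By \eqref{Eqn=Tag616}, with probability tending to one both spectra lie in $[-K,K]$. Take the polynomials $h_1,\dots,h_J$ from \cref{lem=finite-polynomial-spectral-detector} for this $K$ and $\varepsilon$. Suppose $\mathrm{sp}(\mathcal P_A(G^{(N)}))\not\subseteq \mathrm{sp}(\mathcal P_A(x^\Gamma))+[-\varepsilon,\varepsilon]$. Then, by functional calculus, some $j$ satisfies $\|h_j(\mathcal P_A(G^{(N)}))\|>2\|h_j(\mathcal P_A(x^\Gamma))\|$. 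Since $h_j\geq 7/64$ on $[-K,K]$, the right-hand side is bounded below by a positive constant. So it suffices to show that, for each of the finitely many $j$,
\[
\|h_j(\mathcal P_A(G^{(N)}))\| \leq \|h_j(\mathcal P_A(x^\Gamma))\| + \text{(small)}
\]
with high probability, uniformly over $D\leq N^{2L}$ and $\Lambda(A)\leq M$.

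Each $h_j(\mathcal P_A(x))$ is a matrix-valued polynomial of degree $\deg h_j$, with coefficients in $M_D$ built from products of the $A_v$. To return to linear pencils I would use a uniform linearization. An anticommutator/Schur-complement linearization of fixed size $c_j$ (depending only on $\deg h_j$ and $V$) expresses the norm or spectrum of $h_j(\mathcal P_A(x))$ through a self-adjoint linear pencil with coefficients in $M_{c_jD}$ and $\Lambda$ bounded by a constant $M_j(M)$. Applying \eqref{Eqn=618}-type control then requires the upper estimate for pencils with coefficient dimension $c_jD\leq c_jN^{2L}$. This is harmless: \cref{cor:coefficient-dimension-larger-than-N} tolerates any $D\leq \exp(c_HN\rho^2)$, so the constant factor $c_j$ is absorbed.

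Alternatively, and more simply, I would avoid the linearization for the norm itself. The spectral inclusion is equivalent to invertibility with good resolvent bounds of $\mathcal P_A(x)-\lambda$ for $\lambda$ outside the $\varepsilon$-neighbourhood of the limit spectrum. A finite $\varepsilon/4$-net of such $\lambda\in[-K,K]$ reduces this to finitely many shifted pencils $A_0-\lambda$. For each shifted pencil, part (1) applied to its square, via the linearization trick above, shows that $\|(\mathcal P_A(G^{(N)})-\lambda)^{-1}\|$ stays bounded.

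The main obstacle I expect is precisely this step: making the norm lower bound of a shifted pencil (the distance to the spectrum) uniform in $D$ using only norm statements. I would try first the detector route with $h_j$ linearized, since the uniform upper bound is the only ingredient needed there.

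Finally, \eqref{Eqn=618} follows from (2) with $\varepsilon=\gamma/2$. On the good event, $\mathrm{sp}(\mathcal P_A(G^{(N)}))$ avoids $(-\gamma/2,\gamma/2)$, so the matrix is invertible with $\|\mathcal P_A(G^{(N)})^{-1}\|\leq 2/\gamma$, and the estimate is uniform over the stated class.
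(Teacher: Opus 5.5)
Your proof of (1) by combining \cref{cor:coefficient-dimension-larger-than-N} with \cref{lem:uniform-lower-bound} matches the paper. So does your derivation of \eqref{Eqn=618} from (2) with $\varepsilon=\gamma/2$. The gap is in (2), at the step you flag yourself. You never actually obtain $\|h_j(\mathcal P_A(G^{(N)}))\|\le\|h_j(\mathcal P_A(x^\Gamma))\|+o(1)$ uniformly in $D\le N^{2L}$.

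The Schur-complement or anticommutator linearization does not express the \emph{norm} of $h_j(\mathcal P_A(x))$ as the norm of a linear pencil. It only gives the equivalence ``$\lambda-h_j(\mathcal P_A(x))$ invertible iff $L_{h_j,\lambda}(x)$ invertible''. To turn this into a norm bound for $h_j(\mathcal P_A(G^{(N)}))$, you need $L_{h_j,\lambda}(G^{(N)})$ to be invertible for every $|\lambda|>\|h_j(\mathcal P_A(x^\Gamma))\|+\varepsilon$, i.e.\ $0\notin\mathrm{sp}(L_{h_j,\lambda}(G^{(N)}))$. That is a spectral-inclusion statement of \eqref{Eqn=618} type for self-adjoint linear pencils at coefficient dimension $c_jD$, which is statement (2) itself. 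In the paper, \eqref{Eqn=618} and the bounded-degree polynomial corollary are derived \emph{from} (2), so they are not available here.

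\cref{cor:coefficient-dimension-larger-than-N} cannot fill this role. A norm bound on a self-adjoint pencil only locates the extreme points of its spectrum and cannot exclude spectrum near $0$. So ``the constant factor $c_j$ is absorbed'' addresses the wrong estimate. Your alternative route, applying (1) to $(\mathcal P_A-\lambda)^2$ via linearization, is circular in the same way, since the square is not a linear pencil.

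The missing idea is to run the spectral detector at the level of the Hayes model $G^{(N,T_N)}$, where norms of arbitrary polynomials are directly available.
\begin{enumerate}
\item The operator $h(\mathcal P_A(G^{(N,T)}))$ is a polynomial of degree at most $L\deg h$ in the $VT$ Hayes variables. Hence \cite[Theorem~9.7]{CGVvH} applies with $\rho=1$ and the constant adjusted to degree $Lq$; this remains compatible with $D\le N^{2L}$ because $T_N$ is polylogarithmic in $N$.
\item Applying it to the finitely many detector polynomials, together with $g(t)=1+t^2/K_0^2$ to localise the spectrum, gives $\|h(\mathcal P_A(G^{(N,T_N)}))\|<2\|h(\mathcal P_A(s^{(T_N)}))\|$. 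The detector then yields $\mathrm{sp}(\mathcal P_A(G^{(N,T_N)}))\subseteq\mathrm{sp}(\mathcal P_A(s^{(T_N)}))+[-\varepsilon/3,\varepsilon/3]$.
\item You transfer this to $G^{(N)}$ with the Brailovskaya--van Handel universality theorem. It yields spectral inclusion, not merely a norm bound, with error $u_N\to0$.
\item You pass from $s^{(T_N)}$ to $x^\Gamma$ using the Fock-space comparison $\|s_v^{(T)}-\pi_T(x_v^\Gamma)\|\le(2^L-2)/\sqrt T$, which controls the Hausdorff distance of the spectra.
\end{enumerate}
Your plan uses neither of these two spectral-level inputs, and it has no mechanism to produce spectral gaps from the norm estimates you cite.
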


\begin{proof}
Statement \eqref{Eqn=Tag616} follows from Corollary \ref{cor:coefficient-dimension-larger-than-N} and Lemma \ref{lem:uniform-lower-bound}.
It remains to prove \eqref{Eqn=Tag617}.

Put
\[
a_L=2^L-2,
\qquad
C_s=\max\{1,\|x^{\Gamma}_1\|,\ldots,\|x^{\Gamma}_V\|\},
\qquad
K_0=(C_s+a_L)M.
\]
By (6.2''),
\[
\|\mathcal P_A(s^{(T)})\|\leq K_0
\qquad(T\geq1).
\tag{6.19}
\]

Fix $\varepsilon>0$.  Apply Lemma~\ref{lem=finite-polynomial-spectral-detector} with
\[
K=2K_0,
\qquad
\varepsilon \textrm{ replaced by } \frac{\varepsilon}{3},
\]
and let $\mathcal C$ denote the resulting finite family of
polynomials.  Adjoin to $\mathcal C$ the polynomial
\(
g(t)=1+\frac{t^2}{K_0^2}
\).
Let $q=\max_{h\in\mathcal C\cup\{g\}}\deg h$.
For $N\geq2$, set
\[
t_N=(3L+3)\log N,
\qquad
T_N=\left\lceil(1+t_N)^9\right\rceil,
\]
and define
\[
c_{H,q}(L,V,T)
=
(Lq)^{-2}(C_0LVT)^{-2Lq}.
\tag{6.20}
\]
If $h\in\mathcal C\cup\{g\}$, then
$h(\mathcal P_A(G^{(N,T)}))$
is a polynomial of degree at most $Lq$ in the Hayes variables.
Therefore \cite[Theorem~9.7]{CGVvH}, with its parameter
$\rho=1$, gives
\[
\begin{aligned}
\mathbb P\left[
 \|h(\mathcal{P}_A(G^{(N,T_N)}))\|
 \geq
 2\|h(\mathcal{P}_A(s^{(T_N)}))\|
\right]\leq
\frac{N^L}{c_{H,q}(L,V,T_N)}
e^{-c_{H,q}(L,V,T_N)N},
\end{aligned}
\tag{6.21}
\]
provided $D\leq
\exp\big(c_{H,q}(L,V,T_N)N\big)$.
This condition holds for every $D\leq N^{2L}$ and all
sufficiently large $N$, since
\[
c_{H,q}(L,V,T_N)^{-1}
=
\order \big((\log N)^{18Lq}\big).
\]

Taking a union bound over the finite set
$\mathcal C\cup\{g\}$, the total probability in (6.21) tends to
zero.  On the complementary event, (6.19) gives $\| \mathcal{P}_A(s^{(T_N)}))\|\leq K_0$,
and hence
$\| \mathcal{P}_A(G^{(N,T_N)}) \|< 2 K_0$.
Since $\mathcal P_A(G^{(N,T_N)})$ is self-adjoint,
\[
1+
\frac{\|\mathcal P_A(G^{(N,T_N)})\|^2}{K_0^2}
< 5,
\]
so
\[
\mathrm{sp}(\mathcal P_A(G^{(N,T_N)}))
\subseteq[-2K_0,2K_0].
\tag{6.22}
\]

Suppose now that
\[
\mathrm{sp}(\mathcal P_A(G^{(N,T_N)}))
\not\subseteq
\mathrm{sp}(\mathcal P_A(s^{(T_N)}))
 +\left[-\frac{\varepsilon}{3},
         \frac{\varepsilon}{3}\right].
\]
By (6.19), (6.22), and the spectral detector Lemma \ref{lem=finite-polynomial-spectral-detector}, there is
$h\in\mathcal H$ such that
\[
\|h(\mathcal P_A(G^{(N,T_N)}))\|
>
2\|h(\mathcal P_A(s^{(T_N)}))\|,
\]
contradicting the complementary event in (6.21).  Therefore
\[
\mathrm{sp}(\mathcal P_A(G^{(N,T_N)}))
\subseteq
\mathrm{sp}(\mathcal P_A(s^{(T_N)}))
 +\left[-\frac{\varepsilon}{3},
         \frac{\varepsilon}{3}\right]
\tag{6.23}
\]
outside an event whose probability tends to zero uniformly.

The universality comparison gives
\[
\mathrm{sp}(\mathcal P_A(G^{(N)}))
\subseteq
\mathrm{sp}(\mathcal P_A(G^{(N,T_N)}))
 +[-u_N,u_N]
\tag{6.24}
\]
outside an event of probability at most $2DN^Le^{-t_N}+C_1VT_Ne^{-c_2N}$,
where
\[
u_N=
C_1\left(
 N^{-1/2}t_N^{1/2}
 +T_N^{-1/12}t_N^{2/3}
 +T_N^{-1/4}t_N
\right)
\longrightarrow0.
\]
Because $D\leq N^{2L}$, $2DN^Le^{-t_N}\leq2N^{-3}$,
so this exceptional probability also tends to zero.

Finally, (6.2'') gives
\[
\mathrm{sp}(\mathcal P_A(s^{(T_N)}))
\subseteq
\mathrm{sp}(\mathcal P_A(x^{\Gamma}))
+
\left[
 -\frac{a_LM}{\sqrt{T_N}},
 \frac{a_LM}{\sqrt{T_N}}
\right].
\tag{6.25}
\]
Combining (6.23)--(6.25), and using
\(
u_N+\frac{a_LM}{\sqrt{T_N}}
<\frac{2\varepsilon}{3}
\)
for all sufficiently large $N$, proves (6.17).

If $\mathrm{dist}
 \bigl(0,\mathrm{sp}(\mathcal P_A(x^{\Gamma}))\bigr)
> \gamma$,
apply \eqref{Eqn=Tag617} with $\varepsilon=\gamma/2$.  Then, with probability
tending uniformly to one, $\mathrm{dist}
 \bigl(0,\mathrm{sp}(P_A(G^{(N)}))\bigr)
\geq\frac{\gamma}{2}$.
This proves \eqref{Eqn=618}. If $\mathrm{dist}
 \bigl(0,\mathrm{sp}(\mathcal P_A(x^{\Gamma}))\bigr)
= 0 $ then the statement follows from \eqref{Eqn=Tag617} as well. 
\end{proof}

\begin{corollary}[Bounded-degree polynomial tensor-GUE convergence]\label{Bound-degree-tensor-gue-strongconvergence}
Fix $L,V,d,M$. There is an integer
\( c_{V,d}\geq1\)
such that, for every $\varepsilon>0$,
\[
\begin{aligned}
\lim_{N\to\infty}
\sup_{\substack{
 D\geq1,\;c_{V,d}D\leq N^{2L}\\
 P=P^*,\;\deg P\leq d\\
 \Lambda_d(P)\leq M
}}
\mathbb P\left[
 \left|
  \|P(G^{(N)})\|-\|P(x^{\Gamma})\|
 \right|>\varepsilon
\right]
=0.
\end{aligned}
\tag{6.26}
\]
Moreover,
\[
\begin{aligned}
\lim_{N\to\infty}
\sup_{\substack{
 D\geq1,\;c_{V,d}D\leq N^{2L}\\
 P=P^*,\;\deg P\leq d\\
 \Lambda_d(P)\leq M
}}
\mathbb P\left[
 \mathrm{sp}(P(G^{(N)}))
 \not\subseteq
 \mathrm{sp}(P(x^{\Gamma}))+[-\varepsilon,\varepsilon]
\right]
=0.
\end{aligned}
\tag{6.27}
\]
For a non-self-adjoint polynomial, the norm-convergence
statement (6.26) remains valid after replacing
$c_{V,d}$ by $2c_{V,d}$ and applying the preceding
argument to the Hermitian dilation
\[
\mathcal D(P)
=
\begin{pmatrix}
0&P\\
P^*&0
\end{pmatrix},
\qquad
\|\mathcal D(P)\|=\|P\|.
\]
We do not claim any spectral-inclusion statement  for the generally
nonnormal operator $P(G^{(N)})$.
\end{corollary}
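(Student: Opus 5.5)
The plan is to reduce bounded-degree self-adjoint polynomials to linear pencils by a uniform linearization, and then apply Theorem~\ref{thm:tensor-gue-D-larger-than-matrix-dimension}.

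\textbf{Step 1: linearization.} Fix $d$ and $V$. Every self-adjoint $P=\sum_{|w|\le d}A_w\otimes x_w$ with $A_w\in M_D$ admits a standard self-adjoint linearization $\mathcal P_{\hat A}(x)=\hat A_0\otimes 1+\sum_v \hat A_v\otimes x_v$. It should have the following properties.
\begin{enumerate}
\item The coefficients $\hat A_v$ lie in $M_{c_{V,d}D}$, where $c_{V,d}$ is the number of nonempty proper prefixes of words of length $\le d$, plus one. This size depends only on $V,d$, not on $P$.
\item $\Lambda(\hat A)\le M'(V,d,M)$ whenever $\Lambda_d(P)\le M$. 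The blocks of $\hat A$ are $\pm 1_D$, $0$, or the $A_w$ (possibly split symmetrically as $\tfrac12(A_w+A_{w^{\rm rev}}^*)$).
\item The Schur complement formula holds: for $\lambda\in\mathbb R$ and any self-adjoint tuple $y$, $\lambda-P(y)$ is invertible iff $\Lambda_\lambda-\mathcal P_{\hat A}(y)$ is invertible. Here $\Lambda_\lambda=\lambda e_{11}\otimes 1_D$. Moreover the inverses control each other with constants depending only on $V,d,M$ and a bound on $\max_v\|y_v\|$.
\end{enumerate}
The identity $(\lambda-P)^{-1}=$ the $(1,1)$-corner of $(\Lambda_\lambda-\mathcal P_{\hat A})^{-1}$ gives one direction. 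The other direction follows from the explicit block-triangular factorization, whose off-diagonal blocks are monomials of degree $<d$ in $y$.

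\textbf{Step 2: spectral inclusion (6.27).} Norms are bounded: $\|P(x^\Gamma)\|\le C_s^dM$. By (6.26) at degree one, $\|G^{(N)}_v\|$ is bounded by $2C_s$ with high probability. Take a finite $\varepsilon/4$-net $\lambda_1,\dots,\lambda_J$ of $[-K,K]$ with $K=2C_s^dM+1$. For each $\lambda_j$ at distance $\ge\varepsilon/2$ from $\mathrm{sp}(P(x^\Gamma))$, the resolvent bound $\|(\lambda_j-P(x^\Gamma))^{-1}\|\le 2/\varepsilon$ passes via Step 1 to a gap $\gamma=\gamma(\varepsilon,V,d,M)>0$ for the pencil $\Lambda_{\lambda_j}-\mathcal P_{\hat A}(x^\Gamma)$. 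This pencil has coefficient dimension $c_{V,d}D\le N^{2L}$ and $\Lambda\le M'+K$. Applying (\ref{Eqn=618}) uniformly to these finitely many pencils gives, with high probability, $\|(\lambda_j-P(G^{(N)}))^{-1}\|\le C(\varepsilon)$ for all such $j$. Every point within $1/C(\varepsilon)$ of such a $\lambda_j$ therefore lies outside $\mathrm{sp}(P(G^{(N)}))$. Choosing the net mesh below $1/C(\varepsilon)$ (this is possible since $C(\varepsilon)$ does not depend on the mesh) yields $\mathrm{sp}(P(G^{(N)}))\subseteq \mathrm{sp}(P(x^\Gamma))+[-\varepsilon,\varepsilon]$ on $[-K,K]$. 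Points outside $[-K,K]$ are excluded by the norm bound. All estimates depend only on $V,d,M,\varepsilon$, so they are uniform in $D$ and $P$.

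\textbf{Step 3: norm convergence (6.26).} The upper bound is immediate from (6.27). For the lower bound I will not use linearization. Instead I repeat the exactness compression argument of Lemma~\ref{lem:uniform-lower-bound} with $E=\mathcal E_d$, the span of monomials of degree $\le d$ in the exact algebra $C^*(x^\Gamma)$. This gives a fixed size $m$. The fixed-$m$ convergence is uniform on the compact coefficient ball by \cite[Theorem~9.8]{CGVvH} and a net argument. The net argument uses Lipschitz continuity in the coefficients, guaranteed by boundedness in probability of $\max_v\|G_v^{(N)}\|$. The non-self-adjoint case follows from the Hermitian dilation, which doubles $D$.

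\textbf{Main obstacle.} The hardest part is Step 1(c): making the inverse comparison quantitative and uniform in $D$. This requires checking that all factorization constants involve only $\|y_v\|$ and $\Lambda_d(P)$, never $D$. I would write the linearization explicitly as a block tridiagonal chain along prefixes and bound the triangular factors directly.
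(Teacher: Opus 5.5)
Your proposal is correct and follows the paper's proof essentially step by step. The shared steps are: a uniform self-adjoint Schur-complement linearization with coefficient dimension $c_{V,d}D$; the invertibility criterion \eqref{Eqn=618} applied to finitely many pencils along a $\lambda$-net, giving the spectral inclusion (6.27) and hence the upper norm bound; exactness compression to a fixed matrix size, together with \cite[Theorem~9.8]{CGVvH} and a net argument, for the lower norm bound; and the Hermitian dilation for non-self-adjoint $P$.

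The differences are cosmetic. At the net points you use the corner identity to turn the pencil-inverse bound into a resolvent bound for the self-adjoint $P(G^{(N)})$, whereas the paper perturbs $L_{P,\lambda}$ in $\lambda$ by a Neumann series. One constant needs fixing: $K=2C_s^dM+1$ does not dominate $\|P(G^{(N)})\|\le M(2C_s)^d$ on the event $\max_v\|G_v^{(N)}\|\le 2C_s$ when $d\geq 2$. Take $K=(2C_s)^dM+1$, or instead use $\|G_v^{(N)}\|\le C_s+\eta$ with $\eta$ small.
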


\begin{proof}
We use the self-adjoint linearization trick; see \cite[Sections~2 and~7]{HT05},
\cite[Section~2.6]{And13}, and
\cite[Section~3, Step~1]{Male2012}.

For fixed $V$ and $d$, the standard  self-adjoint
Schur-complement linearization gives constants
$c_{V,d}$ and $C_{V,d}$ with the following properties. 

For every self-adjoint polynomial $P$ of degree at most $d$ and
every real $\lambda$, there is a self-adjoint linear pencil
\[
  L_{P,\lambda}(x)
  =
  B_0(\lambda)\otimes1
  +
  \sum_{v=1}^V B_v\otimes x_v
\]
such that:

\begin{enumerate}
\item the coefficient dimension of $L_{P,\lambda}$ is at most
      $c_{V,d}D$;

\item
\begin{equation}\label{invertible}
    \lambda-P(x)\text{ is invertible}
  \quad\Longleftrightarrow\quad
  L_{P,\lambda}(x)\text{ is invertible};
\end{equation}

\item for $\lambda$ in a fixed bounded interval,
\begin{equation}\label{Bv}
    \sum_{v=0}^V\|B_v\|
  \leq
  C_{V,d}(1+M+|\lambda|);
\end{equation}

\item for every $R,\varepsilon>0$ there is
\[
  C=C(V,d,M,R,\varepsilon)
\]
such that, whenever $\max_v \vert x_v \vert \leq R$ and
\[
  \mathrm{dist}
  \bigl(
    \lambda,\mathrm{sp}( P(x))
  \bigr)
  \geq\varepsilon,
\]
one has
\begin{equation}\label{LpC}
    \|L_{P,\lambda}(x)^{-1}\|\leq C.
\end{equation}

\end{enumerate}
Moreover,
\(
L_{P,\lambda}(x)-L_{P,\mu}(x)
=(\lambda-\mu)E\),
where
\(\|E\|=1\).
The last assertion follows directly from the block inverse
formula for the Schur-complement linearization: the auxiliary
block is unitriangular, while the only resolvent term is
$(\lambda-P(x))^{-1}$.

Choose $R>
  \max_{1\leq v\leq V}\|s_v\|+1$.
By \cref{thm:tensor-gue-D-larger-than-matrix-dimension} applied to the coordinate linear pencils,
\[
  \mathbb P\left[
   \max_v\|G_v^{(N)}\|>R
  \right]\longrightarrow0. 
\]
On the complementary event,
\[
\|P(G^{(N)})\|,
\|P(x^{\Gamma})\|
\leq
K:=M\max\{1,R\}^{d}.
\]
Thus all relevant spectra are contained in the fixed interval
\(
I=[-K,K]\).
Moreover, for every $\lambda\in I$, \eqref{Bv} gives
\[
\sum_{v=0}^V\|B_v(\lambda)\|
\leq
M':=C_{V,d}(1+M+K).
\]
The constants $K$ and $M'$ are independent of $D$, $P$, and
$\lambda$.

Fix $\varepsilon>0$.
By \eqref{LpC}, there exists $C=C(V,d,M,R,\varepsilon/2)$
such that
\(
\|L_{P,\lambda}(x^{\Gamma})^{-1}\|\le C
\)
whenever
\(
\mathrm{dist}
(\lambda,\mathrm{sp}(P(x^{\Gamma})))
\ge\frac{\varepsilon}{2}.
\)

Set $\gamma=C^{-1}$.
Choose
\(
0<\delta<
\min\left\{
\frac{\varepsilon}{2},
\frac{\gamma}{4}
\right\},
\)
and let
\(
\lambda_1,\ldots,\lambda_m
\)
be a finite $\delta$-net of the bounded interval containing all
relevant spectra.
For every net point satisfying
\(
\mathrm{dist}
(\lambda_j,\mathrm{sp}(P(x^{\Gamma})))
\ge
\frac{\varepsilon}{2},
\)
we have
\(
\mathrm{dist}\left(
0,\mathrm{sp}(L_{P,\lambda_j}(x^{\Gamma}))
\right)
\ge\gamma.
\)

Applying (6.18) to the finitely many linear pencils
\(
L_{P,\lambda_j},
\)
and taking a union bound, we obtain that, with probability
tending uniformly to one,
\[
\|L_{P,\lambda_j}(G^{(N)})^{-1}\|
\le\frac{2}{\gamma}
\]
for every such net point.

Now let
\(\lambda\)
satisfy
\(
\mathrm{dist}
(\lambda,\mathrm{sp}(P(x^{\Gamma})))
\ge\varepsilon,
\)
and choose
\(\lambda_j\)
with
\(|\lambda-\lambda_j|<\delta.
\)
Then
\(
\mathrm{dist}
(\lambda_j,\mathrm{sp}(P(x^{\Gamma})))
\ge\frac{\varepsilon}{2},
\)
and
\(
L_{P,\lambda}(G^{(N)})
=L_{P,\lambda_j}(G^{(N)})
+(\lambda-\lambda_j)E,
\)
where
\(\|E\|=1\).
Therefore
\[
\left\|
L_{P,\lambda_j}(G^{(N)})^{-1}
\right\| \left\|
\bigl(
L_{P,\lambda}(G^{(N)})
-
L_{P,\lambda_j}(G^{(N)})
\bigr)
\right\|
<
\frac12,
\]
because
\(
\delta<\frac{\gamma}{4}.
\)
Hence a standard von Neumann series argument (see \cite[Theorem 1.2.3]{Murphy}) shows that
\(L_{P,\lambda}(G^{(N)})\)
is invertible.

Using the linearization equivalence (6.28), we conclude that
\[
\mathrm{sp}(P(G^{(N)}))
\subseteq
\mathrm{sp}(P(x^{\Gamma}))
+
[-\varepsilon,\varepsilon]
\]
with probability tending uniformly to one. This proves (6.27),
and therefore the upper estimate in (6.26).

For the lower norm bound, apply the fixed-target exactness
compression argument of \cref{lem:uniform-lower-bound} to the finite family
\(\{w(x^{\Gamma}):|w|\leq d\}\).
The graph-product algebra $C^*(x^{\Gamma}_1,\ldots,x^{\Gamma}_V)$ is exact.
Consequently, arbitrary coefficient matrices may be compressed
to a fixed matrix dimension while preserving
$\|P(x^{\Gamma})\|$ from below. At that fixed dimension,
the polynomial tensor-GUE strong convergence theorem
\cite[Theorem~9.8]{CGVvH}, together with compactness of the
coefficient ball, gives uniform convergence. This proves the
lower bound and completes (6.26).

For a non-self-adjoint polynomial use the Hermitian dilation
\[
  \mathcal D(P)
  =
  \begin{pmatrix}
    0&P\\
    P^*&0
  \end{pmatrix},
  \qquad
  \|\mathcal D(P)\|=\|P\|.
\]
\end{proof}
\subsection{Growing coefficient  strong convergence of the Erd\H{o}s--R\'enyi
$q$-Gaussian matrix model}

We now combine the graph-product approximation of \cref{sec:multistrongconvergence} with the
growing-coefficient tensor-GUE approximation established above. For a
fixed finite graph, the underlying tensor-GUE strong convergence is
\cite[Theorem~9.8]{CGVvH}. In the present setting, however, the
interaction graph has $rk$ vertices and varies with $k$, while the
allowable coefficient dimension grows with, and is eventually chosen
to exceed, the dimension of the matrix models.

We therefore fix $k$, condition on a realization of the interaction
graph, realize that graph as the disjointness graph of finitely many
tensor supports, and apply Corollary~6.10 at a suitably chosen local
dimension $N_k$. Since there are only finitely many graphs on $V_k$,
the choice of $N_k$ can be made uniformly over all graph realizations.
A diagonal choice of $N_k$ then combines this matrix approximation
with \cref{Thm=StrongConvergence}.

Fix $r\geq 1$ and $-1<q<1$, and assume that the canonical map
\(
\Phi_q:\mathcal T^u_{q,r}\longrightarrow \mathcal T^{\mathrm{red}}_{q,r}
\)
is an isomorphism. In particular, the conclusions below hold whenever
$|q|<\sqrt2-1$ \cite{PuszWoronowicz}.

Let
\[
V_k=[k]\times[r],
\qquad
m_k=|V_k|=rk.
\]
We introduce a tensor-coordinate set consisting of one private
coordinate for each vertex and one pair coordinate for each
unordered pair of distinct vertices:
\[
I_k
=
\{p_v:v\in V_k\}
\sqcup
\{p_{\{u,v\}}:\{u,v\}\in\binom{V_k}{2}\};
\]
so the $p_v$ and $p_{\{ u,v \}}$ are by definition the labels of the tensor coordinates.  
And we denote
\[
L_k:=|I_k|
=
m_k+\binom{m_k}{2}.
\]
For a graph $\Gamma$ with vertices $V_k$ and $v\in V_k$, set
\[
K_v^\Gamma
 =\{p_v\}\cup
 \bigl\{\{u,v\}:u\neq v,\ u\not\sim_\Gamma v\bigr\}.
\]
Then
\[
K_u^\Gamma\cap K_v^\Gamma=\varnothing
\quad\Longleftrightarrow\quad
u\sim_\Gamma v.
\] 
For $0 \leq q < 1$, let $d_k=1$. For $ -1< q < 0$, let $\rho_k:\mathrm{Cliff}(V_k)
\longrightarrow M_{d_k}(\mathbb C)$
be a faithful finite-dimensional representation that preserves
the canonical Clifford trace, for example, the GNS
representation of the canonical trace.  We put
$c_v^{(k)}=\rho_k(c_v)$.

For $N\geq1$, let $G_v^{(N,\Gamma)}$ be the
tensor-GUE variable associated with $K_v^\Gamma\subseteq I_k$, and put
\[
\widehat G_v^{(N,\Gamma)}=
\begin{cases}
G_v^{(N,\Gamma)},& 0 \leq q < 1,\\
 c_v^{(k)}\otimes G_v^{(N,\Gamma)},& -1 < q <0.
\end{cases}
\]
Finally, define
\begin{equation}\label{eq:q-gaussian-matrix-model}
X_{k,a}^{(N,\Gamma)}
 =\frac1{\sqrt{k}}\sum_{i=1}^k
   \widehat G_{(i,a)}^{(N,\Gamma)},
\qquad a\in[r].
\end{equation}
The dimension of matrices in \eqref{eq:q-gaussian-matrix-model} is
\[
n_{k,N}=d_kN^{L_k}.
\]
For $D,d\geq1$, write
\[
\mathscr P_d(D)=
\left\{
P\in M_D(\mathbb C)\otimes\mathbb C\langle x_1,\ldots,x_r\rangle:
\deg P\leq d,\ \Lambda_d(P)\leq1
\right\}.
\]

\begin{theorem}[Strongly convergent q-Gaussian matrix model]
\label{thm:q-gaussian-complete-matrix-model}
Fix $r,d\geq1$ and assume that $\Phi_q$ is an isomorphism. In particular, this holds for $|q|<\sqrt{2}-1$. There exist
integers $N_k\to\infty$ and $D_k\to\infty$ satisfying $D_k>n_{k}$, such that, with
\[
X^{(k)}=
\bigl(X_{k,1}^{(N_k,\Gamma_k)},\ldots,
      X_{k,r}^{(N_k,\Gamma_k)}\bigr),
\qquad
n_k=d_kN_k^{L_k},
\]
the following holds:

\begin{enumerate}

\item For every $\varepsilon>0$,
\begin{equation}\label{eq:uniform-additive-q-model}
\sup_{1\leq D\leq D_k}
\sup_{P\in\mathscr P_d(D)}
\mathbb P_{\Gamma,\mathrm{GUE}}
\left(
\left|\|P(X^{(k)})\|-\|P(s^{(q)})\|\right|>\varepsilon
\right)
\longrightarrow0.
\end{equation}

\item For every $\varepsilon>0$,
\begin{equation}\label{eq:uniform-relative-q-model}
\sup_{1\leq D\leq D_k}
\sup_{\substack{0\neq P\in M_D\otimes
\mathbb C\langle x_1,\ldots,x_r\rangle\\
\deg P\leq d}}
\mathbb P_{\Gamma,\mathrm{GUE}}
\left(
\left|
\frac{\|P(X^{(k)})\|}{\|P(s^{(q)})\|}-1
\right|>\varepsilon
\right)
\longrightarrow0.
\end{equation}

\item The weak converge: for every scalar
noncommutative polynomial $Q$,
\begin{equation}\label{eq:trace-convergence-q-model}
\operatorname{tr}_{n_k}\bigl(Q(X^{(k)})\bigr)
\longrightarrow
\tau_q\bigl(Q(s^{(q)})\bigr)
\end{equation}
in probability.
\end{enumerate}
\end{theorem}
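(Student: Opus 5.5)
The plan is a triangle-inequality/diagonal argument in two stages. For fixed $k$ and a fixed realization $\Gamma$ of the graph on $V_k$, the matrices $X^{(N,\Gamma)}_{k,a}$ are linear combinations of the tensor-GUE variables $\widehat G^{(N,\Gamma)}_v$. By the choice of the supports $K^\Gamma_v\subseteq I_k$ (disjoint exactly when $u\sim_\Gamma v$), these converge to the graph-product semicircular family $x^{\Gamma}$. In the Clifford case the twist $c^{(k)}_v$ is a fixed finite-dimensional factor, so it passes through the whole argument by enlarging the coefficient space by $d_k$. Hence a degree-$d$ polynomial $P\in\mathscr P_d(D)$ in $X^{(k)}$ equals a degree-$d$ polynomial $\widetilde P$ in the $V=rk$ variables $\widehat G_v$. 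Its coefficient dimension is $Dd_k$, and
\[
\Lambda_d(\widetilde P)\leq \sum_{|w|\leq d} \|A_w\|\, k^{|w|/2}\cdot k^{-|w|/2}\,k^{|w|}\leq k^{d/2}=:M_k ,
\]
expanding each normalized sum.

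For fixed $k$ the graph realizations form a finite set, and $M_k$, $L_k$, $V_k$ and $d$ are fixed. Corollary~\ref{Bound-degree-tensor-gue-strongconvergence} then gives $N_k$ large enough that, for every graph $\Gamma$ on $V_k$, every $D$ with $c_{V_k,d}\,d_k D\leq N_k^{2L_k}$, and every $P\in\mathscr P_d(D)$,
\[
\mathbb P_{\mathrm{GUE}}\Big(\big|\|P(X^{(N_k,\Gamma)}_k)\|-\|P(S^{(q,\Gamma)}_k)\|\big|>1/k\Big)\leq 1/k .
\]
Here $S^{(q,\Gamma)}_{k,a}=\frac1{\sqrt k}\sum_i x^{q,\Gamma}_{(i,a)}$, which is exactly the operator model of Section~\ref{sec:multistrongconvergence}. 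We use non-self-adjoint polynomials via the Hermitian dilation. In the Clifford case the limit $c_v\otimes x_v^{\Gamma}$ is identified with the Clifford-twisted semicirculars, using that $\rho_k$ is faithful.

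Take $D_k:=\lfloor N_k^{2L_k}/(2c_{V_k,d}d_k)\rfloor$. Enlarging $N_k$ if necessary, this exceeds $n_k=d_kN_k^{L_k}$, since $N_k^{L_k}\gg 2c_{V_k,d}d_k^2$ for $N_k$ large. This gives $D_k>n_k$ and $D_k\to\infty$.

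The second stage applies Theorem~\ref{Thm=StrongConvergence}, which is complete, so uniform in all $D$. On events $\mathcal G_k$ with $\mathbb P_\Gamma(\mathcal G_k)\to1$ we have $|\|P(S^{(q,\Gamma_k)}_k)\|-\|P(s^{(q)})\||\leq \eta_k\|P(s^{(q)})\|$ with $\eta_k\to0$, uniformly over $D$ and $P$ of degree $\leq d$. Since $\|P(s^{(q)})\|\leq \max(1,2/\sqrt{1-q})^d$ for $P\in\mathscr P_d(D)$, combining the two stages by conditioning on $\Gamma$ and a union bound gives \eqref{eq:uniform-additive-q-model}.

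For \eqref{eq:uniform-relative-q-model} we normalize. The issue is that the relative statement requires a lower bound on $\|P(s^{(q)})\|$ in terms of $\Lambda_d(P)$ that is uniform in $D$. This comes from Remark~\ref{Rmk=LinIn} together with Lemma~\ref{Lem=BtoCB}: the inverse of the linear map from coefficients to $\mathcal E^{(q)}_d$ is completely bounded. Consequently $\Lambda_d(P)\leq c\|P(s^{(q)})\|$ with $c$ independent of $D$. Rescaling to $\Lambda_d(P)\leq1$ then gives a denominator bounded below, and the additive estimate becomes a relative one.

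For \eqref{eq:trace-convergence-q-model}: the normalized trace of $Q(X^{(k)})$ equals $\tau_{\rm Cl}\otimes\mathrm{tr}$ of a polynomial in the tensor-GUE variables. By weak convergence (which is part of strong convergence at $D=1$, enlarging $N_k$ if needed) this is close to the graph-product trace of $Q(S^{(q,\Gamma_k)}_k)$. That trace is the vacuum state, and Theorem~\ref{Thm=ConvergenceMomentsToeplitz}, expanding $S=L+L^*$, gives convergence to $\tau_q(Q(s^{(q)}))$.

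The main obstacle is the first stage: the dependence of all constants on $k$ (number of variables $rk$, $L_k$ growing quadratically, and $\Lambda$ growing like $k^{d/2}$). This is why the corollary must be applied at fixed $k$ with $N_k$ chosen afterwards. The only delicate point is checking that the corollary's $N_0$ depends only on $(L,V,d,M,\varepsilon)$, so that it can be taken uniform over the finitely many graphs.
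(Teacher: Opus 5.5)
Your proposal is correct and follows the paper's proof essentially step for step: you apply Corollary~\ref{Bound-degree-tensor-gue-strongconvergence} at fixed $k$ (with $L=L_k$, $V=rk$, $M=k^{d/2}$, coefficient dimension $d_kD$) over the finitely many graphs to choose $N_k$ and $D_k\approx N_k^{2L_k}/(c_kd_k)$, combine with Theorem~\ref{Thm=StrongConvergence} by the triangle inequality, get the relative bound from complete boundedness of the coordinate functionals, and get trace convergence from tensor-GUE moment convergence plus Theorem~\ref{Thm=ConvergenceMomentsToeplitz}. Minor remarks: your displayed bound for $\Lambda_d(\widetilde P)$ is garbled ($k^{|w|}$ summands with coefficient $k^{-|w|/2}$ give $k^{|w|/2}$, hence $k^{d/2}$); the uniformity over graphs that you call delicate is automatic by taking a maximum over finitely many graphs; and your extra factor $2$ in $D_k$ for the Hermitian dilation is slightly more careful than the paper's choice.
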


\begin{proof}
Let
\(
S_k^{(q,\Gamma)}=
\bigl(S_{k,1}^{(q,\Gamma)},\ldots,S_{k,r}^{(q,\Gamma)}\bigr)
\)
be the graph-product or Clifford-twisted graph-product tuple from
\cref{sec:multistrongconvergence}. By \cref{Thm=StrongConvergence} and the bound
\[
\|P(s^{(q)})\|
 \leq \Lambda_d(P)
 \max\left\{1,\frac{2}{\sqrt{1-q}}\right\}^{d},
\]
there are positive numbers $\varepsilon_k\rightarrow0$ and $\beta_k\rightarrow0$
such that
\begin{equation}\label{eq:graph-approximation-stage}
\sup_{D\geq1}\sup_{P\in\mathscr P_d(D)}
\mathbb P_\Gamma
\left(
\left|
\|P(S_k^{(q,\Gamma_k)})\|-\|P(s^{(q)})\|
\right|>\varepsilon_k
\right)
\leq\beta_k.
\end{equation}

For a graph $\Gamma$ on $V_k$ and
$P\in\mathscr P_d(D)$, substitute the averages in
\eqref{eq:q-gaussian-matrix-model} and denote the resulting polynomial
in the $m_k$ tensor-GUE variables by $\widetilde P_\Gamma$. More precisely, put

\[
\widetilde P_\Gamma
(z_{(i,a)})
=
P\!\left(
\frac1{\sqrt k}
\sum_{i=1}^k
\widehat z_{(i,1)},
\;
\dots,
\;
\frac1{\sqrt k}
\sum_{i=1}^k
\widehat z_{(i,r)}
\right),
\]
where
\[
\widehat z_{(i,a)}
=
\begin{cases}
z_{(i,a)},
&
0 \leq q < 1,
\\[1ex]
c^{(k)}_{(i,a)}
\otimes
z_{(i,a)},
&
-1 < q < 0.
\end{cases}
\]
Then
\[ 
P\bigl(X_k^{(N,\Gamma)}\bigr)
 =\widetilde P_\Gamma\bigl(G^{(N,\Gamma)}\bigr),
\qquad
P\bigl(S_k^{(q,\Gamma)}\bigr)
 =\widetilde P_\Gamma(x^\Gamma),
\]
where, for $q<0$, the Clifford matrices are absorbed into the matrix
coefficients of $\widetilde P_\Gamma$. Then \(
\deg(\widetilde P_\Gamma)\le d 
\), the coefficient dimension is \(d_kD\), and
\(\Lambda_d(\widetilde P_\Gamma)
\le k^{d/2}\). Indeed, every monomial of degree $m$
produces at most $k^m$ summands,
each carrying the coefficient
$k^{-m/2}$,
while every Clifford product has norm one.
Therefore \cref{Bound-degree-tensor-gue-strongconvergence} applies directly to
$\widetilde P_\Gamma$
with
\(L=L_k,
V=m_k=rk,
M=k^{d/2}\).

Let \(
c_k:=c_{m_k,d}\)
denote the corresponding linearization constant (see \cref{Bound-degree-tensor-gue-strongconvergence}).
It is independent of $\Gamma$, $D$, and the
particular polynomial in $\mathscr P_d(D)$.
Choose $N_k$ sufficiently large that
\begin{equation}\label{eq;size-choice}
    N_k^{L_k}>2c_kd_k^2+2.
\end{equation}
and
\begin{equation}\label{eq:tensor-transfer-stage}
\sup_{\Gamma}
\sup_{\substack{D\geq1\\
c_kd_kD\leq N_k^{2L_k}}}
\sup_{P\in\mathscr P_d(D)}
\mathbb P_{\mathrm{GUE}}
\left(
\left|
\|P(X_k^{(N_k,\Gamma)})\|
 -\|P(S_k^{(q,\Gamma)})\|
\right|>\varepsilon_k
\right)
\leq k^{-1}.
\end{equation}
We enlarge $N_k$, if necessary, so that the moment-convergence part of
the tensor-GUE theorem also gives, for every graph $\Gamma$ on $V_k$
and every word $w$ in $r$ letters of length at most $k$,
\begin{equation}\label{eq:tensor-moment-stage}
\mathbb P_{\mathrm{GUE}}
\left(
\left|
\operatorname{tr}_{n_{k,N_k}}
 \bigl(w(X_k^{(N_k,\Gamma)})\bigr)
 -\varphi_{q,k}
 \bigl(w(S_k^{(q,\Gamma)})\bigr)
\right|>k^{-1}
\right)
\leq k^{-1}.
\end{equation}
This simultaneous choice is possible, because for fixed $k$, only
finitely many graphs and finitely many words occur. In the case
$q<0$, trace-preserving $\rho_k$ identifies the limiting trace
with the Clifford-twisted graph-product vacuum state.

Now define
\[ 
D_k=
\left\lfloor
\frac{N_k^{2L_k}}{c_kd_k}
\right\rfloor.
\]
Writing $x_k=N_k^{L_k}$, \eqref{eq;size-choice} gives
\[
D_k
\geq \frac{x_k^2}{c_k d_k}-1
>2d_kx_k-1.
\]
Since $n_k=d_kx_k>0$, it follows that
$D_k>n_k$ as in the requirements of the theorem.

 By the triangle
inequality, conditioning on $\Gamma_k$, 
\eqref{eq:graph-approximation-stage} and \eqref{eq:tensor-transfer-stage} yield,
\begin{equation}\label{Eqn=Tag642}
\sup_{1\leq D\leq D_k}
\sup_{P\in\mathscr P_d(D)}
\mathbb P_{\Gamma,\mathrm{GUE}}
\left(
\left|
\|P(X^{(k)})\|-\|P(s^{(q)})\|
\right|>2\varepsilon_k
\right) \leq k^{-1}+\beta_k\longrightarrow0.
\end{equation}
This proves \eqref{eq:uniform-additive-q-model}.

Finally, since the monomials of degree at most $d$ in the
$q$-Gaussians are linearly independent (see Remark \ref{Rmk=LinIn}), their coordinate
functionals are completely bounded. Hence there is a constant
$C_{q,r,d}$ such that, at every matrix coefficient dimension,
\[
  \Lambda_d(P)
  \leq
  C_{q,r,d}\|P(s^{(q)})\|.
\]
Scaling  \eqref{Eqn=Tag642}  therefore proves
\eqref{eq:uniform-relative-q-model}.

Finally, fix a word $w$. For all sufficiently large $k$, $|w|\leq k$.
By \eqref{eq:tensor-moment-stage},
\[
\operatorname{tr}_{n_k}\bigl(w(X^{(k)})\bigr)
-
\varphi_{q,k}
 \bigl(w(S_k^{(q,\Gamma_k)})\bigr)
\longrightarrow0
\]
in probability. On the other hand, \cref{Thm=ConvergenceMomentsToeplitz} gives
\[
\varphi_{q,k}
 \bigl(w(S_k^{(q,\Gamma_k)})\bigr)
\longrightarrow
\tau_q\bigl(w(s^{(q)})\bigr)
\]
in probability. Hence \eqref{eq:trace-convergence-q-model} holds for
words, and therefore for all scalar polynomials by linearity.
\end{proof}

\section{Applications}\label{sec:MF}
\begin{theorem}[MF property]\label{thm:q-gaussian-mf}
Let $r\geq 1$ and let $|q|<\sqrt{2}-1$.
Then $A_q(\mathbb C^r)$ is an MF algebra; equivalently, there
exist positive integers $n_m$ and a unital injective $*$-homomorphism
\[
A_q(\mathbb C^r)
\longrightarrow
\prod_{m=1}^{\infty}M_{n_m}(\mathbb C)\bigg/
\bigoplus_{m=1}^{\infty}M_{n_m}(\mathbb C),
\]
where
\[
\bigoplus_{m=1}^{\infty}M_{n_m}(\mathbb C)
=\left\{(a_m)_m:
\lim_{m\to\infty}\|a_m\|=0
\right\}.
\]
\end{theorem}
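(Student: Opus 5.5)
The plan is to extract deterministic matrix tuples from the random models of Theorem \ref{thm:q-gaussian-complete-matrix-model} by a diagonal argument over a countable dense set of scalar polynomials. Scalar coefficients ($D=1$) are enough, since the MF embedding only needs norms of scalar $*$-polynomials. Fix a countable family $\{Q_j\}_{j\geq1}$ of scalar noncommutative polynomials in $x_1,\dots,x_r$ with rational complex coefficients. Since the variables are self-adjoint, this family is closed under the involution and is norm-dense in the $*$-algebra generated by $s^{(q)}$.

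For each degree $d$, Theorem \ref{thm:q-gaussian-complete-matrix-model}(1) with $D=1$ gives matrix sizes $n_k^{(d)}$ and random self-adjoint tuples $X^{(k,d)}$ with the following property: for each fixed $Q_j$ of degree at most $d$, after rescaling so that $\Lambda_d\leq1$,
\[
\mathbb P\bigl(\bigl|\|Q_j(X^{(k,d)})\|-\|Q_j(s^{(q)})\|\bigr|>\varepsilon\bigr)\to0.
\]
We build the sequence indexed by $m$ as follows.
\begin{itemize}
\item Take $d=d_m=\max\{\deg Q_1,\dots,\deg Q_m\}$.
\item Choose $k=k_m$ so large that, for each $j\leq m$, the exceptional probability at tolerance $1/m$ is below $1/(2m^2)$.
\item By a union bound over $j\leq m$, there is an outcome giving a deterministic self-adjoint tuple $Y^{(m)}\in M_{n_m}(\mathbb C)^r$ with
\[
\bigl|\|Q_j(Y^{(m)})\|-\|Q_j(s^{(q)})\|\bigr|\leq 1/m\quad\text{for all } j\leq m.
\]
\end{itemize}
Taking $Q=x_a$ also gives $\sup_m\|Y^{(m)}_a\|<\infty$. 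Hence the classes $y_a=(Y^{(m)}_a)_m$ lie in $\prod_m M_{n_m}/\bigoplus_m M_{n_m}$, and their norms are computed as $\limsup_m$ of the norms of representatives.

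Next we define the embedding by $\pi(Q(s^{(q)}))=Q(y)$ on the $*$-algebra $\mathbb C\langle s^{(q)}\rangle$. For each $Q_j$ we get $\|Q_j(y)\|=\lim_m\|Q_j(Y^{(m)})\|=\|Q_j(s^{(q)})\|$. To pass to an arbitrary polynomial $Q$ of degree $d$, approximate it by rational $Q_j$ of the same degree whose coefficients are close. Since $\sup_m\|Y^{(m)}_a\|$ and $\|s^{(q)}_a\|$ are uniformly bounded, both sides change by at most a constant times the coefficient error. Therefore $\|Q(y)\|=\|Q(s^{(q)})\|$ for all scalar $Q$. This shows that $\pi$ is well defined, because the $s^{(q)}$-monomials are linearly independent by Remark \ref{Rmk=LinIn}, and that $\pi$ is an isometric unital $*$-homomorphism on a dense subalgebra. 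Hence $\pi$ extends to an injective unital $*$-homomorphism of $A_q(\mathbb C^r)$.

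The main obstacle is the diagonal step. Theorem \ref{thm:q-gaussian-complete-matrix-model} is formulated at fixed degree $d$, so the sequence $k_m$ must be chosen together with increasing $d_m$. We also need simultaneous control of finitely many polynomials at once, which requires a union bound rather than probability per polynomial. The uniform boundedness of the generators must hold along the chosen outcomes, and we secure it by including each $x_a$ among the $Q_j$. The range $|q|<\sqrt2-1$ enters only through the hypothesis that $\Phi_q$ is an isomorphism \cite{PuszWoronowicz}. The trace statement (3) is not needed for MF. It would, however, give the stronger conclusion that the embedding intertwines the limiting trace with traces on $\prod_m M_{n_m}$, using a further diagonal choice that also satisfies \eqref{eq:trace-convergence-q-model} on $Q_1,\dots,Q_m$.
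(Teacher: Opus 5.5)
Your proposal is correct and follows essentially the same route as the paper. Both arguments pick deterministic outcomes $Y^{(m)}$ of the random models by a union bound over the first $m$ polynomials in an enumeration of $\mathbb Q(i)\langle x_1,\ldots,x_r\rangle$, and then extend the resulting isometric unital $*$-homomorphism from the dense polynomial algebra to $A_q(\mathbb C^r)$. Your version is somewhat more careful than the paper's: you let the degree $d_m$ grow with $m$ (Theorem~\ref{thm:q-gaussian-complete-matrix-model} is stated at fixed degree), you secure $\sup_m\|Y^{(m)}_a\|<\infty$ by including the $x_a$ among the $Q_j$, and you pass from $\mathbb Q(i)$-coefficients to complex coefficients by a continuity estimate, all of which the paper leaves implicit.
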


\begin{proof}
Let
\(
\mathcal P_0=
\mathbb Q(i)\langle x_1,\ldots,x_r\rangle
\)
be the countable $*$-algebra of noncommutative polynomials with
coefficients in $\mathbb Q(i)$, where the variables are seen
to be self-adjoint. 

Enumerate
\(
\mathcal P_0=\{P_1,P_2,\ldots\}
\). By the strongly convergent matrix model for $q$-Gaussian with $|q|< \sqrt{2}-1$, for any fixed noncommutative polynomial there are random self-adjoint matrix tuples \( X^{(k)}=(X^{(k)}_1,\cdots, X^{(k)}_r)\in M_{N_k}(\mathbb{C})^r_{sa}\) such that
\[\Vert P(X^{(k)})\Vert\longrightarrow \Vert P(s^{(q)}_1,\cdots,s^{(q)}_r)\Vert\]
in probability.

After passing to an increasing subsequence of matrix model parameters, for every $m$ we may choose a tuple of deterministic matrices
\[Y^{(m)}=(Y^{(m)}_1,\cdots,Y^{(m)}_r)\in M_{n_m}(\mathbb{C})_{sa}^r\]
such that
\[\max_{1\leq j\leq m}\bigg|\Vert P_j(Y^{(m)})\Vert -\Vert P_j(s^{(q)}_1,\cdots, s^{(q)}_r)\Vert \bigg|\leq \frac{1}{m}.\]
Indeed, convergence in probability and the union bound show that the corresponding event has positive probability once the parameter of the matrix model is sufficiently large.

Consequently,  for every $P\in \mathcal P_0$,
\begin{equation}\label{MF1}
    \lim_{m\to \infty}\Vert P_j(Y^{(m)}_1,\cdots,Y^{(m)}_r)\Vert\longrightarrow \Vert P_j(s^{(q)}_1,\cdots, s^{(q)}_r)\Vert.
\end{equation}

Put \[\mathcal{Q}:= \prod_{m=1}^{\infty}M_{n_m}(\mathbb C)\bigg/
\bigoplus_{m=1}^{\infty}M_{n_m}(\mathbb C)\]
and denote the quotient map by
\[\pi:\prod_{m=1}^{\infty}M_{n_m}(\mathbb C)\longrightarrow \mathcal Q\]
Let $\mathcal P_0(s^{(q)})$ be the polynomial algebra generated by the $q$-Gaussian system, and define $\Psi_0:\mathcal P_0(s^{(q)})\to \mathcal Q$ by 

\[\Psi_0( P(s_1^{(q)},\cdots, s_r^{(q)}))=\pi\big((P(Y_1^{(m)},\cdots,Y_r^{(m)}))_{m=1}^{\infty}\big).\]
This map is well-defined. Indeed, if \(P(s_1^{(q)},\cdots, s_r^{(q)})=0\), then \eqref{MF1} gives that \[\lim_{m\to \infty}\Vert P_j(Y^{(m)}_1,\cdots,Y^{(m)}_r)\Vert\rightarrow 0,\]
so the corresponding sequence belongs to $\bigoplus_{m=1}^{\infty}M_{n_m}(\mathbb C)$; in fact $P=0$ by Remark \ref{Rmk=LinIn}.

Since the norm in $\mathcal Q$ is a quotient norm, we have \[\Vert \Psi_0(P(s^{(q)}))\Vert =\limsup_{m\to \infty }\Vert P(Y^{(m)})\Vert=\Vert P(s^{(q)})\Vert. \]
Thus $\Psi_0$ is a isomerty. Since $\mathcal P_0(s^{(q)})$ is norm dense in $A_q(\mathbb C^r)$, $\Psi_0$ extends uniquely to a unital injective $*$-homomorphism $\Psi:A_q(\mathbb C^r)\to \mathcal Q$. Hence $A_q(\mathbb C^r)$ is MF algebra.
\end{proof}

\begin{theorem}[The extension semigroup]\label{thm:q-gaussian-ext}
Let $r\geq2$ and let $|q|<\sqrt{2}-1$.
Then the Brown--Douglas--Fillmore extension semigroup
\(\mathrm{Ext}\bigl(A_q(\mathbb C^r)\bigr)
\)
is not a group.
\end{theorem}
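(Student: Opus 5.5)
The strategy is Brown's criterion \cite{Brown2004}: if a separable unital $C^*$-algebra $A$ is MF but not quasidiagonal, then $\mathrm{Ext}(A)$ is not a group. Theorem \ref{thm:q-gaussian-mf} already gives that $A_q(\mathbb C^r)$ is MF for $|q|<\sqrt2-1$, and the algebra is separable and unital. So it only remains to show that $A_q(\mathbb C^r)$ is not quasidiagonal when $r\geq 2$.

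For non-quasidiagonality I would combine simplicity with a failure of stable finiteness, or with the absence of an amenable trace of the right kind. The first step is to invoke \cite{AmrutamJekelWasilewski2026}, which shows that $A_q(\mathbb C^r)$ is simple for $r\geq2$. Hence every nonzero representation is faithful, and so is every nonzero $*$-homomorphism into a quotient.

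For the second step I would use the standard fact that a quasidiagonal unital $C^*$-algebra has an amenable tracial state (Voiculescu; see Brown--Ozawa). A cleaner route runs through exactness. $A_q(\mathbb C^r)$ is a subalgebra of the nuclear algebra $\mathcal T^{\rm red}_{q,r}$ (nuclearity by \cite{Kuzmin}), so it is exact. For exact algebras, an amenable trace is the same as a quasidiagonal-compatible trace, and any amenable trace $\tau$ yields an injective (hyperfinite) GNS von Neumann algebra $\pi_\tau(A)''$.
\begin{itemize}
\item[(a)] I would argue that $\tau_q$ is the unique tracial state on $A_q(\mathbb C^r)$. I expect this to come from the same strong convergence or averaging arguments as in \cite{AmrutamJekelWasilewski2026}, which typically give unique trace together with simplicity. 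Failing that, I would show directly that every tracial state extends to the tracial weak closure.
\item[(b)] If quasidiagonality held, $\tau_q$ would be amenable, and the GNS closure $\Gamma_q(\mathbb R^r)$ would be injective.
\item[(c)] This contradicts the known non-injectivity of $\Gamma_q(\mathbb R^r)$ for $r\geq2$ (Nou; Śniady, Ricard), which shows it is a non-injective II$_1$ factor.
\end{itemize}

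The main obstacle is step (a), the uniqueness of the trace. If it is not directly available, I would bypass it. Quasidiagonality passes to the image under the faithful representation, and by a result of Brown every quasidiagonal algebra has some amenable trace, but that trace need not be $\tau_q$. To force it to be $\tau_q$ I would use the unique trace property established alongside simplicity in \cite{AmrutamJekelWasilewski2026}. With non-quasidiagonality in hand, Brown's criterion together with Theorem \ref{thm:q-gaussian-mf} concludes that $\mathrm{Ext}(A_q(\mathbb C^r))$ is not a group.
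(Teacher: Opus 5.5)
Your proposal is correct and follows essentially the same route as the paper. The paper also combines Brown's criterion with the MF theorem, and gets non-quasidiagonality by showing that quasidiagonality would produce an amenable trace, which by the unique-trace result of \cite{AmrutamJekelWasilewski2026} must be $\tau_q$; exactness would then force $\Gamma_q(\mathbb R^r)$ to be injective, contradicting Nou. The appeal to simplicity is unnecessary, and the fallback in step (a) would not by itself give uniqueness of the trace, but the argument you ultimately commit to is the paper's.
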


\begin{proof}
    The Brown criterion states  that if a unital separable $C^*$-algebra is MF but not quasi-diagonal, then its Brown-Douglas-Fillmore extension semigroup is not a group; see \cite{Brown2004}. Therefore we only need to show that $A_q(\mathbb C^r)$ is not quasidiagonal. 

    Suppose, for contradiction, that $A_q(\mathbb C^r)$ is quasidiagonal. Then by the caraterization of quasidiagonal, there exist unital completely positive maps $\phi_n:A_q(\mathbb C^r)\to M_{k_n}(\mathbb C)$ such that, for $a,b\in A_q(\mathbb C^r) $,
    \begin{equation}\label{Ext1}
        \Vert \phi_n(ab)-\phi_n(a)\phi_n(b)\Vert\longrightarrow 0;
    \end{equation}
    \[ 
        \Vert\phi_n(a)\Vert\longrightarrow \Vert a\Vert.
    \]
    Now, put \[\tau_n:=\tr_{k_n}\circ \phi_n: A_q(\mathbb C^r)\to \mathbb C,\]
    where $\tr_{k_n}$ is the normalized trace on $M_{k_n}(\mathbb C)$. After passing to a subsequence, the state $\tau_n$ converge in the weak-* topology to a state $\tau$ on $A_q(\mathbb C^r)$.

    Then the state $\tau$ is tracial. Indeed, for $a,b\in A_q(\mathbb C^r) $, because \(\tr_{k_n}(\phi_n(a)\phi_n(b))=\tr_{k_n}(\phi_n(b)\phi_n(a))\), we have
    \[\vert \tau_n(ab)-\tau_n(ba)\vert\leq \Vert \phi_n(ab)-\phi_n(a)\phi_n(b)\Vert + \Vert \phi_n(ba)-\phi_n(b)\phi_n(a)\Vert.\]
    Therefore $\tau(ab)=\tau(ba)$.

    $A_q(\mathbb C^r) $ has a unique tracial state, namely the canonical vacuum trace $\tau_q$, which was proved recently in \cite[Theorem C]{AmrutamJekelWasilewski2026}. Hence $\tau=\tau_q$. The map $\phi_n$, together with \eqref{Ext1}, therefore show that $\tau_q$ is a quasidiagonal trace. In particular, it is an amenable trace.

    On the other hand, since $A_q(\mathbb C^r)$ is exact \cite{Kuzmin, KennedyNica}, the von Nuemann algebra generated by the GNS representation w.r.t. an amenable trace is injective; see, for example, \cite{BrownOzawa}. Therefore, $\pi_{q}(A_q(\mathbb C^r))''=\Gamma_q(\mathbb R^r)$ is injective. But by Nou's result (\cite[Theorem 2]{Nou2004}), $\Gamma_q(\mathbb R^r)$ is noninjective whenever $r\geq 2$, this leads to a contraction. Thus $A_q(\mathbb C^r),r\geq 2$ is not quasidiagonal, which completes the proof of this theorem.
\end{proof}
\begin{remark}
The assumption $r\geq2$ in
Theorem~\ref{thm:q-gaussian-ext} is necessary. When $r=1$, the
algebra is generated by one self-adjoint operator, and hence
\(
A_q(\mathbb C)=
C^*(s_1^{(q)})\cong
C(\mathrm{sp}(s_1^{(q)})).
\)
is a separable nuclear $C^*$-algebra. The Choi-Effros' theorem on liftings of ucp maps on nuclear $C^*$-algebras (\cite{ChEf76}) implies that $\mathrm{Ext}(A_q(\mathbb C))$ is a group.
\end{remark}

\end{document}